\newtheorem{theorem}{Theorem}[section]
\newtheorem{lemma}{Lemma}[section]
\theoremstyle{definition}
\newtheorem{definition}{Definition}
\numberwithin{equation}{section}
\def\Xint#1{\mathchoice
{\XXint\displaystyle\textstyle{#1}}%
{\XXint\textstyle\scriptstyle{#1}}%
{\XXint\scriptstyle\scriptscriptstyle{#1}}%
{\XXint\scriptscriptstyle%
\scriptscriptstyle{#1}}%
\!\int}
\def\XXint#1#2#3{{\setbox0=\hbox{$#1{#2#3}{%
\int}$ }
\vcenter{\hbox{$#2#3$ }}\kern-.6\wd0}}
\def\barint{\,\Xint -} 
\def\bariint{\barint_{} \kern-.4em \barint}
\def\bariiint{\bariint_{} \kern-.4em \barint}
\newcommand{\beq}{\begin{equation}}
\newcommand{\bea}[1]{\begin{array}{#1} }
\newcommand{\eeq}{ \end{equation}}
\newcommand{\ea}{ \end{array}}
\newcommand{\mcl}{\mathcal}
\def \d {{\delta}}
\def\mean#1{\mathchoice%
          {\mathop{\kern 0.2em\vrule width 0.6em height 0.69678ex depth -0.58065ex
                  \kern -0.8em \intop}\nolimits_{\kern -0.4em#1}}%
          {\mathop{\kern 0.1em\vrule width 0.5em height 0.69678ex depth -0.60387ex
                  \kern -0.6em \intop}\nolimits_{#1}}%
          {\mathop{\kern 0.1em\vrule width 0.5em height 0.69678ex
              depth -0.60387ex
                  \kern -0.6em \intop}\nolimits_{#1}}%
          {\mathop{\kern 0.1em\vrule width 0.5em height 0.69678ex depth -0.60387ex
                  \kern -0.6em \intop}\nolimits_{#1}}}
\def\vintslides_#1{\mathchoice%
          {\mathop{\kern 0.1em\vrule width 0.5em height 0.697ex depth -0.581ex
                  \kern -0.6em \intop}\nolimits_{\kern -0.4em#1}}%
          {\mathop{\kern 0.1em\vrule width 0.3em height 0.697ex depth -0.604ex
                  \kern -0.4em \intop}\nolimits_{#1}}%
          {\mathop{\kern 0.1em\vrule width 0.3em height 0.697ex depth -0.604ex
                  \kern -0.4em \intop}\nolimits_{#1}}%
          {\mathop{\kern 0.1em\vrule width 0.3em height 0.697ex depth -0.604ex
                  \kern -0.4em \intop}\nolimits_{#1}}}
\newcommand{\aveint}[2]{\mathchoice%
          {\mathop{\kern 0.2em\vrule width 0.6em height 0.69678ex depth -0.58065ex
                  \kern -0.8em \intop}\nolimits_{\kern -0.45em#1}^{#2}}%
          {\mathop{\kern 0.1em\vrule width 0.5em height 0.69678ex depth -0.60387ex
                  \kern -0.6em \intop}\nolimits_{#1}^{#2}}%
          {\mathop{\kern 0.1em\vrule width 0.5em height 0.69678ex depth -0.60387ex
                  \kern -0.6em \intop}\nolimits_{#1}^{#2}}%
          {\mathop{\kern 0.1em\vrule width 0.5em height 0.69678ex depth -0.60387ex
                  \kern -0.6em \intop}\nolimits_{#1}^{#2}}}
\def\eqn#1$$#2$${\begin{equation}\label#1#2\end{equation}}
\def\charfn_#1{{\raise1.2pt\hbox{$\chi
_{\kern-1pt\lower3pt\hbox{{$\scriptstyle#1$}}}$}}}
\def\qq1{q_*}
\def\q2{q_{**}}
\def\dist{\operatorname{dist}}
\newdimen\vintbar
\def\vint{-\kern-\vintbar\int}
\def\A{\mathcal A}
\def\J{\mathcal J}
\def\L{\mathcal L}
\def\K{\mathcal K}
\def\L{\mathcal L}
\def\0{\boldsymbol 0}
\newcommand{\Ll}{\left}
\newcommand{\Rr}{\right}
\newcommand{\R}{\mathbb R}
\newcommand{\eps}{\epsilon}
\newtoks\by
\newtoks\paper
\newtoks\book
\newtoks\jour
\newtoks\yr
\newtoks\pages
\newtoks\vol
\newtoks\publ
\def\name[#1, #2]{#1 #2}
\def\ota{{\hbox{\bf ???}}}
\def\cLear{\by=\ota\paper=\ota\book=\ota\jour=\ota\yr=\ota
\pages=\ota\vol=\ota\publ=\ota}
\def\endpaper{\the\by, \textit{\the\paper},
{\the\jour} \textbf{\the\vol} (\the\yr), \the\pages.\cLear}
\def\endbook{\the\by, \textit{\the\book},
\the\publ, \the\yr.\cLear}
\def\endpap{\the\by, \textit{\the\paper}, \the\jour.\cLear}
\def\endproc{\the\by, \textit{\the\paper}, \the\book, \the\publ,
\the\yr, \the\pages.\cLear}
\renewcommand{\d}{\, \mathrm{d}} 
\DeclareMathOperator*{\argmin}{arg\min}
\newcommand{\g}{\mathbf{g}}
\renewcommand{\j}{\mathbf{j}}
\newcommand{\la}{\langle}
\newcommand{\ra}{\rangle}
\begin{document}

\title[weak solutions for  Kolmogorov-Fokker-Planck]{The Dirichlet problem for  Kolmogorov-Fokker-Planck type equations with rough coefficients}

\address{Malte Litsg{\aa}rd \\Department of Mathematics, Uppsala University\\
S-751 06 Uppsala, Sweden}
\email{malte.litsgard@math.uu.se}

\address{Kaj Nystr\"{o}m\\Department of Mathematics, Uppsala University\\
S-751 06 Uppsala, Sweden}
\email{kaj.nystrom@math.uu.se}

\thanks{K. N was partially supported by grant  2017-03805 from the Swedish research council (VR)}

\author{Malte Litsg{\aa}rd and Kaj Nystr{\"o}m}
\maketitle
\begin{abstract}
\noindent \medskip
 We establish the existence and uniqueness, in bounded as well as unbounded Lipschitz type cylinders of the forms $U_X\times V_{Y,t}$ and $\Omega\times \mathbb R^{m}\times \mathbb R$, of weak solutions to Cauchy-Dirichlet problems for the strongly degenerate parabolic operator \begin{equation*}
\mathcal{L}:= \nabla_X\cdot(A(X,Y,t)\nabla_X)+X\cdot\nabla_Y-\partial_t,
\end{equation*}
assuming that $A=A(X,Y,t)=\{a_{i,j}(X,Y,t)\}$ is a real $m\times m$-matrix valued, measurable function such that $A(X,Y,t)$ is symmetric, bounded and uniformly elliptic. Subsequently we solve the continuous Dirichlet problem and establish the representation of the solution using associated parabolic measures.  The paper is motivated, through our recent studies \cite{LN1,LN2,LN3}, by a growing need and interest  to gain a deeper understanding of the Dirichlet problem for the operator $\L$ in Lipschitz type domains. The key idea underlying our results is to prove, along the lines of Brezis and Ekeland \cite{BE1,BE2}, and inspired by the recent work of S. Armstrong and J-C. Mourrat \cite{AM} concerning variational methods for the kinetic Fokker-Planck equation, that the solution can be obtained as the minimizer of a uniformly convex functional.  \\

\noindent
2000  {\em Mathematics Subject Classification.}  35K65, 35K70, 35H20, 35R03.
\noindent

\medskip

\noindent
{\it Keywords and phrases: Kolmogorov equation, parabolic, ultraparabolic, hypoelliptic, operators in divergence form,  Dirichlet problem, Lipschitz domain, weak solutions, convex functional, Brezis and Ekeland, kinetic Fokker-Planck equation.}
\end{abstract}

    \setcounter{equation}{0} \setcounter{theorem}{0}

    \section{Introduction}

    This paper is devoted to a study of existence and uniqueness of weak solutions to Cauchy-Dirichlet problems for the strongly degenerate parabolic operator
\begin{equation}\label{e-kolm-nd}
    \L:= \nabla_X\cdot(A(X,Y,t)\nabla_X)+X\cdot\nabla_Y-\partial_t,
\end{equation}
where $(X,Y,t):=(x_1,...,x_{m},y_1,...,y_{m},t)\in \mathbb R^{m}\times\mathbb R^{m}\times\mathbb R=:\mathbb R^{N+1}$, $N:=2m$, $m\geq 1$. $A=A(X,Y,t)=\{a_{i,j}(X,Y,t)\}$ is assumed to be a real $m\times m$-matrix valued, measurable function such that $A(X,Y,t)$ is symmetric and
    \begin{equation}\label{eq:Aellip}
  \kappa^{-1} |\xi|^2 \leq \sum_{i,j=1}^{m} a_{i,j}(X,Y,t)\xi_i \xi_j \leq \kappa |\xi|^2,
\end{equation}
for some $1 \leq \kappa < \infty$ and for all $\xi \in \R^{m}$, $(X,Y,t)\in\mathbb R^{N+1}$.

We recall that the prototype for the operators in \eqref{e-kolm-nd}, i.e. the operator
    $$\K:=\nabla_X\cdot \nabla_X+X\cdot\nabla_Y-\partial_t,$$
 was originally introduced and studied by
Kolmogorov in a famous note published in 1934 in Annals of Mathematics, see \cite{K}. Kolmogorov noted that $\K$ is an example of a degenerate parabolic
operator having strong regularity properties and he proved that
\[
\K u = f \in C^\infty \implies u \in C^\infty.
\]
Using the terminology introduced by H{\"o}rmander, see \cite{Hm}, we say that $\K$ is hypoelliptic. Naturally, for operators as in \eqref{e-kolm-nd}, assuming only measurable coefficients and \eqref{eq:Aellip}, the methods of Kolmogorov and H{\"o}rmander can not be directly applied in a study of weak solutions and estimates thereof.
{Regularity theory for weak solutions, assuming only measurable coefficients and \eqref{eq:Aellip}, was developed in e.g. \cite{WangZhang} and \cite{GIMV}, where local H{\"o}lder continuity of weak solutions was proved.  The Dirichlet problem for degenerate parabolic equations as in \eqref{e-kolm-nd} have been studied under stronger assumptions on the coefficients, in particular see \cite{Manfredini} where the coefficients are assumed to be H{\"o}lder continuous. }

Let $U_X\subset\mathbb R^m$ be a bounded Lipschitz domain and let $V_{Y,t}\subset \mathbb R^{m}\times \mathbb R$ be a bounded domain with boundary which is $C^{1,1}$-smooth, {i.e. $C^{1,1}$ with respect to $Y$ as well as $t$.} Let $N_{Y,t}$ denote the outer unit normal to $V_{Y,t}$. We introduce
    \begin{eqnarray}\label{Kolb}
\quad\partial_{\K}(U_X\times V_{Y,t}):=(\partial U_X\times V_{Y,t})\cup\{(X,Y,t)\in \overline{U_X}\times \partial V_{Y,t}\mid (X,-1)\cdot N_{Y,t}>0\}.
    \end{eqnarray}
    $\partial_\K(U_X\times V_{Y,t})$ will be referred to as the Kolmogorov boundary of $U_X\times V_{Y,t}$, and the Kolmogorov boundary serves, in the context of the operator $\L$, as the natural substitute for the parabolic boundary used in the context of the Cauchy-Dirichlet problem for uniformly elliptic parabolic equations.

In this paper we first establish the existence and uniqueness of appropriate weak solutions to the Cauchy-Dirichlet problem
\begin{equation}\label{dpweak}
\begin{cases}
	\mathcal{L} u =g^*  &\text{in} \ U_X\times V_{Y,t}, \\
u=g & \text{on} \ \partial_\K(U_X\times V_{Y,t}).
\end{cases}
\end{equation}
We refer to Section \ref{Sec1} for the precise definition of the function spaces, i.e. $W(U_X\times V_{Y,t})$, $L_{Y,t}^2(V_{Y,t},{H}_X^{-1}(U_X))$, used, as well as the notions of weak solutions to \eqref{dpweak}, see Definition \ref{defwe}.

We prove the following theorem.

\begin{theorem}\label{weakdp1} Assume that $A$ satisfies \eqref{eq:Aellip} with constant $\kappa$. Consider $U_X\times V_{Y,t}\subset\mathbb R^{N+1}$,  where $U_X\subset\mathbb R^{m}$ is a bounded Lipschitz domain and
$V_{Y,t}\subset\mathbb R^{m}\times \mathbb R$ is a bounded domain with boundary  which is $C^{1,1}$-smooth. Let $g^*\in L_{Y,t}^2(V_{Y,t},{H}_X^{-1}(U_X))$, $g\in W(U_X\times V_{Y,t})$. Then there exists a unique weak solution $u\in W(U_X\times V_{Y,t})$ to the problem in \eqref{dpweak} in the sense of Definition \ref{defwe}. Furthermore, there exists a constant $c$, independent of $u$ and $g$, but depending on $m$, $\kappa$, and $U_X\times V_{Y,t}$, such that
    \begin{eqnarray*}
    ||u||_{W(U_X\times V_{Y,t})}\leq c\bigl (||g||_{W(U_X\times V_{Y,t})}+||g^*||_{L_{Y,t}^2(V_{Y,t},{H}_X^{-1}(U_X))}\bigr ).
    \end{eqnarray*}
    \end{theorem}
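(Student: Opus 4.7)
The plan is to recast \eqref{dpweak} as a uniformly convex variational problem, following the Brezis--Ekeland principle as adapted by Armstrong--Mourrat \cite{AM} to kinetic Fokker--Planck equations. A first reduction is to subtract the boundary datum: writing $u=v+g$ with $g\in W(U_X\times V_{Y,t})$, the new unknown $v$ must satisfy $\L v = g^*-\L g =: f$, where $f\in L^2_{Y,t}(V_{Y,t},H_X^{-1}(U_X))$, with $v=0$ on $\partial_\K(U_X\times V_{Y,t})$ in the trace sense built into $W$. The claimed norm estimate will then follow from an energy inequality applied to $v=u-g$, so it suffices to solve the zero boundary data problem.

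The variational formulation pairs $v$ with an auxiliary flux variable $\j\in L^2(U_X\times V_{Y,t};\R^m)$ and minimizes
\begin{equation*}
J(v,\j):=\int_{U_X\times V_{Y,t}}\Bigl[\tfrac{1}{2}A^{-1}\j\cdot \j+\tfrac{1}{2}A\nabla_X v\cdot \nabla_X v-\j\cdot \nabla_X v\Bigr]\,dX\,dY\,dt
\end{equation*}
over pairs $(v,\j)$ satisfying the linear constraint $(\partial_t-X\cdot \nabla_Y)v=\nabla_X\cdot \j+f$ distributionally and $v$ vanishing on $\partial_\K$. Ellipticity \eqref{eq:Aellip} yields the pointwise bound $\tfrac{1}{2}A^{-1}\j\cdot \j+\tfrac{1}{2}A\xi\cdot \xi-\j\cdot \xi\ge 0$, with equality iff $\j=A\xi$; hence $J\ge 0$ on the admissible set, and $J(v,\j)=0$ forces $\j=A\nabla_X v$, which combined with the constraint says exactly that $v$ is a weak solution of $\L v=f$. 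Uniform convexity and coercivity of $J$ in its natural Hilbert structure, together with linearity (hence closedness) of the constraint, allow the direct method of the calculus of variations to produce a unique minimizer $(v_*,\j_*)$.

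The crux is showing that this minimum value is $0$. Using the constraint to replace $\nabla_X\cdot \j_*$ and integrating by parts in $X$ (the boundary term vanishes since $v_*=0$ on $\partial U_X\times V_{Y,t}\subset \partial_\K$) yields
\begin{equation*}
-\int \j_*\cdot \nabla_X v_*\, dX\, dY\, dt = \tfrac{1}{2}\int (\partial_t-X\cdot \nabla_Y)v_*^2\, dX\, dY\, dt-\int v_* f\, dX\, dY\, dt.
\end{equation*}
Since $\nabla_{Y,t}\cdot (X,-1)\equiv 0$, the transport integral reduces by the divergence theorem on $\partial V_{Y,t}$ to a surface integral weighted by $(X,-1)\cdot N_{Y,t}$; on the inflow portion, which lies inside $\partial_\K$ by \eqref{Kolb}, $v_*$ vanishes, while on the outflow portion the sign is favorable. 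Combining this identity with the Euler--Lagrange conditions for the constrained minimum forces $\j_*=A\nabla_X v_*$, so $J(v_*,\j_*)=0$ and $v_*$ is a weak solution. Uniqueness follows from strict convexity of $J$, or equivalently by testing the equation for a difference of two solutions against itself and using the same boundary identity.

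\textbf{Principal obstacle.} The decisive difficulty is the boundary integration argument for the transport operator $\partial_t-X\cdot \nabla_Y$ acting on $v_*^2$: functions in $W(U_X\times V_{Y,t})$ possess only the limited regularity needed to define $(\partial_t-X\cdot \nabla_Y)v$ as an element of the dual space, so the surface integral over $\partial V_{Y,t}$ must be justified through the trace theory to be developed in Section \ref{Sec1}. The $C^{1,1}$ smoothness of $V_{Y,t}$, together with the specific structure of the Kolmogorov boundary \eqref{Kolb}, is precisely what is needed to make this trace identity rigorous and to ensure that the vanishing of $v_*$ on the inflow part of $\partial V_{Y,t}$ absorbs the portion of the boundary integral that would otherwise be uncontrolled.
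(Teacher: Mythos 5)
Your variational functional is the same as the paper's up to the substitution $\j=A\g$: expanding the paper's integrand $\tfrac12 A(\nabla_X f-\g)\cdot(\nabla_X f-\g)$ and using symmetry of $A$ reproduces your $\tfrac12 A^{-1}\j\cdot\j+\tfrac12 A\nabla_X v\cdot\nabla_X v-\j\cdot\nabla_X v$, and the divergence constraints match after the (harmless) reduction $u=v+g$. The identification of null minimizers with weak solutions, and the sign computation for the transport term on the Kolmogorov boundary, are also correct and parallel the paper's Lemma 3.3 and identity \eqref{apa+}.

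The genuine gap is the step where you assert that the minimum value is zero. You write that ``combining this identity with the Euler--Lagrange conditions for the constrained minimum forces $\j_*=A\nabla_X v_*$,'' but the first-order conditions for a minimum of $J$ subject to the linear constraint $(\partial_t-X\cdot\nabla_Y)v=\nabla_X\cdot\j+f$ do \emph{not} give this directly. What they give is the existence of a Lagrange multiplier $\lambda$ such that $\j_*=A\nabla_X(v_*+\lambda)$, where $\lambda$ formally solves an adjoint Kolmogorov equation $\nabla_X\cdot(A\nabla_X\lambda)+(\partial_t-X\cdot\nabla_Y)\lambda=0$ with dual boundary conditions. Establishing $\lambda\equiv 0$ (equivalently $J(v_*,\j_*)=0$) is precisely the heart of the matter, and the integration-by-parts identity you record does not resolve it. The paper handles this by a Fenchel duality argument: it introduces the perturbed value function $G(f^*):=\inf_{f\in W_0}\bigl(J[f+g,f^*+g^*]-\iint\langle f^*,f\rangle\bigr)$, proves $G$ is convex, locally bounded above, and lower semi-continuous (so $G=G^{**}$), and reduces $G(0)\le 0$ to showing $G^*(h)\ge 0$ for every $h\in L^2_{Y,t}(V_{Y,t},H^1_{X,0}(U_X))$. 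That last nonnegativity is far from automatic: it requires first cutting $h$ down to $W\cap C^\infty_{X,0}(\overline{U_X\times V_{Y,t}})$ (Lemma \ref{red} and the density argument around \eqref{smoothbound}), then a delicate interior approximation $f_\delta=\tilde f\psi_\delta+(1-\psi_\delta)b$ to control the boundary contribution, and finally Lemma \ref{lemma1c} to show the resulting surface term $\int_{U_X}\iint_{\partial V_{Y,t}}\tfrac12(b+h)^2(X,-1)\cdot N_{Y,t}\,\d\sigma_{Y,t}\d X$ can be driven to be nonnegative. None of this is subsumed by ``Euler--Lagrange conditions.'' Relatedly, your claim of uniform convexity of $J$ is asserted rather than proved; the paper's Lemma 3.2 obtains it via the transport sign identity together with the Poincar\'e inequality of Lemma \ref{lemma1a-}, which you would need to invoke explicitly. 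Your ``Principal obstacle'' paragraph correctly flags the trace issue for $(\partial_t-X\cdot\nabla_Y)v_*^2$, but that is a secondary difficulty; the primary missing idea is the convex duality step.
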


    Taking the analysis on $U_X\times V_{Y,t}$, and in particular Theorem \ref{weakdp1}, as the starting point we then consider weak solutions in the  unbounded setting $\Omega\times \mathbb R^m\times\mathbb R$ and in this case we assume that
$\Omega\subset\mathbb R^m$ is an (unbounded) Lipschitz domain
         \begin{eqnarray}\label{Lip}
 \Omega=\{ X=(x,x_{m})\in\mathbb R^{m-1}\times\mathbb R\mid x_m>\psi(x)\},
    \end{eqnarray}
    where $\psi:\mathbb R^{m-1}\to\mathbb R$ is a Lipschitz function. I.e. we consider the problem
     \begin{equation}\label{dpweak+gal}
\begin{cases}
	\L u = g^*  &\text{in} \ \Omega\times \mathbb R^{m}\times \mathbb R, \\
      u = g  & \text{on} \ \partial\Omega\times \mathbb R^{m}\times \mathbb R.
\end{cases}
\end{equation}
    In this case we refer to Section \ref{Sec3} for the precise definition of the function spaces, i.e.
$W(\mathbb R^{N+1})$, $W_{\mathrm{loc}}(\Omega\times \mathbb R^{m}\times \mathbb R)$, $ L^2_{Y,t}(\mathbb R^{m}\times \mathbb R,{H}_X^{-1}(\mathbb R^{m}))$, used as well as the notions of weak solutions, see Definition \ref{deffa}.

    In this setting we prove the following theorems: an existence theorem and a uniqueness theorem.

\begin{theorem}\label{weakdp1++} Assume that $A$ satisfies \eqref{eq:Aellip} with constant $\kappa$. Let $\Omega\subset\mathbb R^m$, $m\geq 1$, be a (unbounded) Lipschitz domain as defined in \eqref{Lip} and let $g\in W(\mathbb R^{N+1})$, $g^*\in L^2_{Y,t}(\mathbb R^{m}\times \mathbb R,{H}_X^{-1}(\mathbb R^{m}))$. Then there exists a weak solution $u\in W_{\mathrm{loc}}(\Omega\times \mathbb R^{m}\times \mathbb R)$ to the problem in \eqref{dpweak+gal} in the sense of Definition \ref{deffa}.
    \end{theorem}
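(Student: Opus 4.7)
The plan is to exhaust $\Omega\times\R^m\times\R$ by an increasing sequence of bounded cylinders of the type covered by Theorem \ref{weakdp1}, solve the corresponding Cauchy-Dirichlet problems on each, and pass to the limit by a diagonal extraction argument. Concretely, I would choose bounded Lipschitz subdomains $U_X^k\subset\Omega$ with $U_X^k\nearrow\Omega$ (obtained, for instance, by truncating the graph representation \eqref{Lip} inside $B_k(0)\subset\R^m$ and mollifying the corners so as to preserve the Lipschitz character), together with an increasing sequence of bounded $C^{1,1}$-smooth subdomains $V_{Y,t}^k\subset\R^m\times\R$ with $V_{Y,t}^k\nearrow\R^m\times\R$. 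For each $k$, Theorem \ref{weakdp1} applied on $U_X^k\times V_{Y,t}^k$ with boundary datum $g$ on $\partial_\K(U_X^k\times V_{Y,t}^k)$ and source $g^*$ produces a unique weak solution $u_k\in W(U_X^k\times V_{Y,t}^k)$.

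The crucial step is then to derive local energy estimates that are uniform in $k$. Setting $v_k:=u_k-g$, so that $\L v_k=g^*-\L g$ in $U_X^k\times V_{Y,t}^k$ with $v_k=0$ on $\partial_\K(U_X^k\times V_{Y,t}^k)$, I would test the equation against $\phi^2 v_k$ for cutoffs $\phi\in C^\infty_c(\Omega\times\R^m\times\R)$ and all $k$ large enough that $\mathrm{supp}\,\phi\subset U_X^k\times V_{Y,t}^k$. Using the ellipticity of $A$ together with the fact that the first-order part $X\cdot\nabla_Y-\partial_t$ is formally skew-adjoint on $L^2_{Y,t}$, so that testing against $\phi^2 v_k$ only yields the commutator term $-\tfrac12\int v_k^2\,(X\cdot\nabla_Y-\partial_t)(\phi^2)$ (boundary contributions vanishing because $\phi$ is compactly supported), Young's inequality gives a Caccioppoli-type bound of the form
\[
\int\phi^2|\nabla_X v_k|^2\leq C(\phi)\bigl(\|v_k\|_{L^2(\mathrm{supp}\,\phi)}^2+\|g\|_{W(\mathrm{supp}\,\phi)}^2+\|g^*\|_{L^2_{Y,t}(V_{Y,t}^k,H_X^{-1}(U_X^k))}^2\bigr).
\]

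To close this inequality into a genuine $k$-independent local bound one needs to control $\|v_k\|_{L^2(\mathrm{supp}\,\phi)}$ uniformly in $k$. I would do this either by iterating the Caccioppoli estimate across a chain of nested cutoffs together with a careful use of the transport structure to propagate the vanishing of $v_k$ from the inflow part of $\partial V_{Y,t}^k$, or, more cleanly, by appealing to the variational characterization underlying the proof of Theorem \ref{weakdp1}: since $v_k$ minimizes a uniformly convex functional in the spirit of Brezis-Ekeland \cite{BE1,BE2} and Armstrong-Mourrat \cite{AM}, whose value at the admissible competitor $0$ is controlled solely by $\|g\|_{W(\R^{N+1})}+\|g^*\|_{L^2_{Y,t}(\R^m\times\R,H_X^{-1}(\R^m))}$, the corresponding a priori bound depends only on data norms and not on the geometry of $U_X^k\times V_{Y,t}^k$. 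A standard diagonal extraction over an exhausting sequence of compact subsets then produces a subsequence $u_{k_j}\rightharpoonup u$ weakly on every compact $K\subset\Omega\times\R^m\times\R$, with $u\in W_{\mathrm{loc}}(\Omega\times\R^m\times\R)$.

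Passage to the limit in the weak formulation (Definition \ref{deffa}) is then straightforward, as every term is linear in $u_k$ and continuous under weak $W$-convergence on the compactly supported test class. The boundary condition $u=g$ on $\partial\Omega\times\R^m\times\R$ is inherited in the trace sense from $u_k-g=0$ on $\partial U_X^k\times V_{Y,t}^k$ through the continuity of the trace map under weak convergence on any bounded cylinder covering a piece of $\partial\Omega$. I expect the main obstacle to be precisely the uniform local bound: the a priori estimate from Theorem \ref{weakdp1} involves a constant depending on the Poincaré constant of $U_X^k$, which degenerates as $U_X^k\nearrow\Omega$ whenever $\Omega$ is unbounded, so the variational (or iterated Caccioppoli) argument must be carried out with some care in order to extract estimates that are truly insensitive to the diameter of $U_X^k$.
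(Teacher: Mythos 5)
Your plan is, in outline, the same as the paper's: exhaust $\Omega\times\mathbb R^m\times\mathbb R$ by bounded cylinders $U_X^R\times V_{Y,t}^R$ of the type covered by Theorem~\ref{weakdp1}, solve the Cauchy--Dirichlet problem on each, establish bounds uniform on fixed subcylinders, and pass to the limit by a diagonal extraction. Two points of comparison are worth making. First, the paper does not put the unmodified datum $g$ on $\partial_\K(U_X^R\times V_{Y,t}^R)$; it multiplies by a cutoff $\phi_R$ that equals $1$ on $\Delta_X^{R/2}\times\overline{V_{Y,t}^R}$ and vanishes on the artificially created part of the Kolmogorov boundary, so that $g_R=g\phi_R\in W_{Y,t,0}(U_X^R\times V_{Y,t}^R)$. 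This ensures the approximate solutions carry the correct datum on the piece of $\partial\Omega\times\mathbb R^m\times\mathbb R$ that the cylinder exposes while not imposing anything spurious on the new boundary pieces; with your unmodified choice $g|_{U_X^k\times V_{Y,t}^k}$ you would also need to check that this restriction lies in $W(U_X^k\times V_{Y,t}^k)$ and that the trace on the artificial boundary does not interfere with the limit, which the cutoff avoids cleanly.

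Second, and more substantively, your worry about the uniform bound is well placed and is, in fact, the weakest point of the argument: the a~priori estimate of Theorem~\ref{weakdp1} carries a constant depending on $U_X\times V_{Y,t}$ (through Lemma~\ref{lemma1a-}, whose constant depends on $\operatorname{diam} U_X$), and the paper's chain of inequalities, as written, produces a bound on $\|u_R\|_{W(U_X^R\times V_{Y,t}^R)}$ with a constant $c(R)$ that is then restricted to $U_X^{\tilde R}\times V_{Y,t}^{\tilde R}$ and asserted to become $c(\tilde R)$; the restriction step is monotone in the wrong direction for that conclusion to be automatic. Your proposed remedies (local Caccioppoli iteration, or a domain-insensitive variational bound) are therefore reasonable candidates for making the uniform estimate rigorous, though as stated they are also only sketches: the Caccioppoli inequality leaves $\|v_k\|_{L^2(\operatorname{supp}\phi)}$ on the right, and the variational characterization in Section~3 establishes that the minimum value is zero, not a data-only bound on the minimizer, so the uniform convexity constant (which again involves the Poincar\'e constant of $U_X^k$) reappears. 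In short, the route is the paper's route, you use the unmodified boundary data where the paper cuts off, and you have correctly identified the place where both arguments require additional care.
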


\begin{theorem}\label{weakdp1gagain} Assume that $A$ satisfies \eqref{eq:Aellip} with constant $\kappa$.  Assume in addition that \begin{eqnarray}\label{comp}
    \quad A=A(X,Y,t)\equiv I_m\mbox{ outside some arbitrary but fixed compact subset of $\mathbb R^{N+1}$}.
    \end{eqnarray}
    Let $\Omega\subset\mathbb R^m$, $m\geq 1$, be a (unbounded) Lipschitz domain as defined in \eqref{Lip} and let $g\in W(\mathbb R^{N+1})\cap L^\infty(\mathbb R^{N+1})$, $g^*\in L^2_{Y,t}(\mathbb R^{m}\times \mathbb R,{H}_X^{-1}(\mathbb R^{m}))$. Let $u\in W_{\mathrm{loc}}(\Omega\times \mathbb R^{m}\times \mathbb R)$ be given by Theorem \ref {weakdp1++}.  If $u\in L^\infty(\Omega\times \mathbb R^{m}\times \mathbb R)$, then $u$ is the unique solution.
    \end{theorem}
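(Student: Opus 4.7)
The plan is to proceed by contradiction. Suppose $u_1,u_2\in W_{\mathrm{loc}}(\Omega\times\mathbb R^m\times\mathbb R)\cap L^\infty(\Omega\times\mathbb R^m\times\mathbb R)$ are two weak solutions of \eqref{dpweak+gal} in the sense of Definition \ref{deffa}, the existence of at least one such solution (the given $u$) being guaranteed by Theorem \ref{weakdp1++}. Set $w:=u_1-u_2$. Then $w$ is a weak solution of $\L w=0$ in $\Omega\times\mathbb R^m\times\mathbb R$, vanishes on $\partial\Omega\times\mathbb R^m\times\mathbb R$, and satisfies $\|w\|_{L^\infty}\leq 2M$ with $M:=\max(\|u_1\|_{L^\infty},\|u_2\|_{L^\infty})$. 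The objective reduces to proving $w\equiv 0$.

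The first move is localization. I would exhaust $\Omega\times\mathbb R^m\times\mathbb R$ by bounded cylinders $Q_R=U_X^R\times V_{Y,t}^R$ falling under the hypotheses of Theorem \ref{weakdp1}, where $U_X^R$ is a bounded Lipschitz approximation to $\Omega\cap B_R(0)$ in $\mathbb R^m$ and $V_{Y,t}^R$ is a $C^{1,1}$-smooth domain approximating an intrinsic Kolmogorov ball of extent $R$ in $X$, $R^3$ in $Y$, and $R^2$ in $t$, reflecting the dilation invariance $(X,Y,t)\mapsto(\lambda X,\lambda^3 Y,\lambda^2 t)$ of $\K$. For $R$ sufficiently large, the compact set furnished by \eqref{comp} outside which $A\equiv I_m$ is strictly contained in $Q_R$, so that on the complement of this compact set inside $Q_R$, $\L$ reduces to $\K$. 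By Theorem \ref{weakdp1}, $w|_{Q_R}$ is the unique weak solution on $Q_R$ with its own trace on $\partial_\K Q_R$; this trace is identically $0$ on $\partial\Omega\cap\overline{U_X^R}\times V_{Y,t}^R$ and pointwise bounded by $2M$ on the remaining ``artificial'' portion of $\partial_\K Q_R$.

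The second step is a Caccioppoli-type energy estimate. I would test the weak formulation of $\L w=0$ against $w\eta_R^2$, where $\eta_R\in C_c^\infty(Q_R)$ is adapted to the Kolmogorov scaling so that $|\nabla_X\eta_R|\lesssim R^{-1}$, $|\nabla_Y\eta_R|\lesssim R^{-3}$, and $|\partial_t\eta_R|\lesssim R^{-2}$. Using $w=0$ on $\partial\Omega\times\mathbb R^m\times\mathbb R$ to discard the lateral boundary term and exploiting the identities $(X\cdot\nabla_Y-\partial_t)(w^2/2)=(X\cdot\nabla_Y w-\partial_t w)w$ and $\nabla_Y\cdot X=0$, integration by parts yields
\begin{equation*}
\int A\nabla_X w\cdot\nabla_X w\,\eta_R^2\leq C(\kappa)\int w^2\bigl(|\nabla_X\eta_R|^2+|X||\nabla_Y\eta_R|\eta_R+|\partial_t\eta_R|\eta_R\bigr),
\end{equation*}
which, combined with $\|w\|_{L^\infty}\leq 2M$, gives quantitative control of $\nabla_X w$ over Kolmogorov balls in terms of $M$ and the Kolmogorov-intrinsic volume; by itself, however, this estimate is not strong enough to force $w\equiv 0$.

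The final and hardest step is to genuinely exploit the structural assumption \eqref{comp}: outside the compact set in question, $w$ solves the constant-coefficient Kolmogorov equation $\K w=0$, for which an explicit Gaussian-type fundamental solution, a Harnack-type inequality, and a natural maximum principle on domains with Kolmogorov-type boundary are at one's disposal. I would compare $w$ on $Q_R$ with a barrier built from the $\K$-fundamental solution that majorizes $|w|$ on the artificial portion of $\partial_\K Q_R$ while decaying to $0$ in the interior of $Q_R$ as $R\to\infty$. The maximum principle on $Q_R$ would then yield $|w|\leq\varepsilon_R$ on any fixed compact set, with $\varepsilon_R\to 0$, and letting $R\to\infty$ would force $w\equiv 0$ and hence $u_1=u_2$. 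The main obstacle is the construction of this barrier: since the degeneracy of $\L$ in the $(Y,t)$-directions makes $\partial_\K Q_R$ partially characteristic, standard radial barriers are ineffective, and one is forced to use an anisotropic barrier reflecting the intrinsic Kolmogorov scaling and propagate control from the ``outflow'' portion of $\partial_\K Q_R$ into the interior along the characteristic flow $(X,-1)$.
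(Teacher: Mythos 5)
Your overall strategy mirrors the paper's: reduce uniqueness to showing that a bounded weak solution $w$ of the homogeneous problem with zero Dirichlet data vanishes identically, localize to bounded cylinders where Theorem~\ref{weakdp1} applies, exploit the assumption~\eqref{comp} to observe that $\L=\K$ outside a compact set, and close the argument with a maximum principle combined with a decay estimate in the far region. This is indeed the route the paper takes: the paper first establishes the weak maximum principle in $U_X^R\times V_{Y,t}^R$ by regularizing the coefficients (Lemma~6.1 in \cite{LN1}), then uses H\"older continuity and fundamental-solution estimates for $\K$ (Theorem~3.2 and Lemma~4.17 in \cite{LN1}) to show that if $|u|\le M^*$ on a fixed cylinder $U_X^R\times V_{Y,t}^R$ containing the non-constant part of $A$, then $u<\eta$ outside $U_X^{R/\delta}\times V_{Y,t}^{R/\delta}$ for $\delta=\delta(\eta,M^*)$; the weak maximum principle on that bounded cylinder then forces $u<\eta$ throughout, which is a contradiction if $u\ge 2\eta$ somewhere.

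The gap in your proposal is precisely where you say ``The main obstacle is the construction of this barrier.'' You have correctly identified this as the crux, but you have not supplied the ingredient that actually closes it. The decay at infinity does not come from a hand-built anisotropic barrier propagated along the characteristic flow $(X,-1)$; it comes from quantitative boundary regularity and Gaussian-type fundamental-solution bounds for $\K$ in Lipschitz-type domains, applied to a bounded $\K$-caloric function that vanishes on $\partial\Omega\times\mathbb R^m\times\mathbb R$ away from the compact perturbation. Without invoking (or reproving) such estimates the final maximum-principle step has no small upper bound $\varepsilon_R$ to feed on the artificial portion of $\partial_\K Q_R$, so the contradiction never materializes. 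In addition, the Caccioppoli estimate in your second step is a detour: it does not appear in the paper's proof, and — as you yourself observe — it does not by itself force $w\equiv 0$, so it does no work toward the conclusion.
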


    Finally, we consider the continuous Dirichlet problem and the representation of the solution using associated parabolic measures.

     \begin{theorem}\label{DP} Assume that $A$ satisfies \eqref{eq:Aellip} with constant $\kappa$ as well as \eqref{comp}. Let $\Omega\subset\mathbb R^m$, $m\geq 1$, be a (unbounded) Lipschitz domain as defined in \eqref{Lip} and let $\varphi\in W(\mathbb R^{N+1})\cap C_0(\mathbb R^{N+1})$. Let
$u=u_\varphi$ be the unique  bounded weak solution given by Theorem \ref{weakdp1++} and Theorem \ref{weakdp1gagain} with $g=\varphi$, $g^\ast\equiv 0$. Then
 $$u\in C(\bar \Omega\times \mathbb R^m\times\mathbb R).$$
Furthermore, there exists, for every $(X,Y,t)\in \Omega\times \mathbb R^m\times \mathbb R $, a unique probability
measure  $\omega_{\K}(X,Y,t,\cdot)$ on $\partial\Omega\times \mathbb R^m\times \mathbb R $ such that
\begin{eqnarray*}
u(X,Y,t)=\iiint_{\partial\Omega\times \mathbb R^m\times \mathbb R }\varphi(\tilde X,\tilde Y,\tilde t)\d \omega_{\K}(X,Y,t,\tilde X,\tilde Y,\tilde t).
\end{eqnarray*}
 \end{theorem}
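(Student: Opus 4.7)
The plan is to deduce Theorem \ref{DP} from Theorems \ref{weakdp1++}--\ref{weakdp1gagain} together with the local H\"older regularity of \cite{WangZhang,GIMV}, via the classical scheme: establish a comparison/maximum principle, prove continuity up to the lateral boundary $\partial\Omega\times\mathbb R^m\times\mathbb R$, and then invoke the Riesz representation theorem. First I would prove the maximum principle $\|u_\varphi\|_{L^\infty}\le \|\varphi\|_{L^\infty}$ by testing the weak form of $\L u=0$ against a truncation of $(u_\varphi-\|\varphi\|_{L^\infty})_+$, and analogously from below; the transport term $X\cdot\nabla_Y-\partial_t$ integrates to zero on such test functions, reducing matters to the elliptic-in-$X$ estimate that also underlies the uniqueness proof of Theorem \ref{weakdp1gagain}. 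Linearity then yields the comparison principle: $\varphi_1\le\varphi_2$ implies $u_{\varphi_1}\le u_{\varphi_2}$.

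Interior H\"older continuity of $u_\varphi$ is given by \cite{WangZhang,GIMV}, so the core of the argument is continuity at a boundary point $(X_0,Y_0,t_0)\in\partial\Omega\times\mathbb R^m\times\mathbb R$. The key observation is that if $w=w(X)$ depends only on $X$, then $\L w=\nabla_X\cdot(A(X,Y,t)\nabla_X w)$ is, at each fixed $(Y,t)$, a uniformly elliptic divergence-form operator in $X$ with ellipticity constant $\kappa$. Since $\Omega$ is Lipschitz, each $X_0\in\partial\Omega$ satisfies the exterior cone condition and hence admits a classical elliptic barrier $w_{X_0}\ge 0$ that vanishes at $X_0$ and is $\L$-superharmonic on $\Omega\cap B(X_0,r)$, uniformly in $(Y,t)$. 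Localizing on a bounded Lipschitz cylinder furnished by Theorem \ref{weakdp1} and combining $w_{X_0}$ with the modulus of continuity of $\varphi$ at $(X_0,Y_0,t_0)$ via the comparison principle then gives $u_\varphi(X,Y,t)\to \varphi(X_0,Y_0,t_0)$ as $(X,Y,t)\to(X_0,Y_0,t_0)$ through $\Omega\times\mathbb R^m\times\mathbb R$, so $u_\varphi\in C(\bar\Omega\times\mathbb R^m\times\mathbb R)$.

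For the representation, fix $(X,Y,t)\in\Omega\times\mathbb R^m\times\mathbb R$ and define $\Lambda_{X,Y,t}(\varphi):=u_\varphi(X,Y,t)$ on $W(\mathbb R^{N+1})\cap C_0(\mathbb R^{N+1})$. Linearity follows from superposition and the uniqueness in Theorem \ref{weakdp1gagain}, while the maximum principle gives positivity and $|\Lambda_{X,Y,t}(\varphi)|\le \|\varphi\|_{L^\infty}$. Because $C_c^\infty(\mathbb R^{N+1})\subset W(\mathbb R^{N+1})\cap C_0(\mathbb R^{N+1})$ is sup-norm dense in $C_0(\partial\Omega\times\mathbb R^m\times\mathbb R)$, $\Lambda_{X,Y,t}$ extends uniquely to a positive bounded linear functional on $C_0(\partial\Omega\times\mathbb R^m\times\mathbb R)$, and the Riesz representation theorem provides a unique Radon measure $\omega_\K(X,Y,t,\cdot)$ with the asserted representation formula. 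Choosing $\varphi_n\in C_c^\infty$ with $0\le\varphi_n\le 1$ and $\varphi_n\uparrow 1$, comparison with the constant solution $1$ yields $\omega_\K(X,Y,t,\partial\Omega\times\mathbb R^m\times\mathbb R)=1$, so $\omega_\K(X,Y,t,\cdot)$ is a probability measure.

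The main obstacle is the boundary-continuity step: since $\L$ is elliptic only in $X$ and a first-order transport in $(Y,t)$, no honest $(N+1)$-dimensional barriers exist. The way around it is the observation that the lateral boundary $\partial\Omega\times\mathbb R^m\times\mathbb R$ lives entirely in the elliptic direction, so $X$-only barriers from the classical Lipschitz-domain theory for uniformly elliptic equations are automatically $\L$-barriers.
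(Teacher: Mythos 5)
Your overall architecture --- maximum/comparison principle, boundary continuity, then Riesz representation --- matches the logical skeleton, and the Riesz representation step is essentially correct and unobjectionable: $\Lambda_{X,Y,t}(\varphi):=u_\varphi(X,Y,t)$ is well-defined on boundary restrictions by uniqueness, positive and sup-norm bounded by the maximum principle, and the Stone--Weierstra{\ss}/Riesz argument together with monotone approximation of $1$ gives the probability measure. However, the central step --- boundary continuity --- contains a genuine gap, and it is exactly the step on which the paper spends all of its effort.

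The problem is the claim that $X$-only elliptic barriers suffice. When you localize $u_\varphi$ to a bounded cylinder $U_X\times V_{Y,t}$ with $X_0\in\partial U_X$, the comparison principle compares $u_\varphi-\varphi(X_0,Y_0,t_0)$ with the barrier on the \emph{entire} Kolmogorov boundary $\partial_{\K}(U_X\times V_{Y,t})$, which by \eqref{Kolb} is the union of the lateral piece $\partial U_X\times V_{Y,t}$ \emph{and} the inflow piece $\{(X,Y,t)\in\overline{U_X}\times\partial V_{Y,t} : (X,-1)\cdot N_{Y,t}>0\}$. On the part of the inflow piece where $X$ is close to $X_0$ (and $X\in\Omega$, so this is interior in the original problem), the barrier $w_{X_0}(X)$ is close to zero, while $u_\varphi$ there takes an unknown interior value that you have no way to bound by the small quantity $\omega(\delta)+Cw_{X_0}(X)$; in fact bounding the oscillation of $u_\varphi$ on that inflow piece is precisely what you are trying to prove. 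So the one-shot comparison does not close. The observation that $\L w=\nabla_X\cdot(A\nabla_X w)$ for $X$-only $w$ is correct, and $w_{X_0}$ is indeed an $\L$-supersolution, but ``$\L$-supersolution'' is not the bottleneck --- the geometry of $\partial_{\K}$ is.

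The paper circumvents this by quoting the scale-invariant boundary H\"older decay of Theorem 3.2 in \cite{LN1} (which is proved by an iteration combining interior Harnack with a quantitative oscillation decay across scales, not by a single-cylinder barrier comparison), applied to the regularized operators $\L_\epsilon$ of \eqref{e-kolm-nd+ddd}: for each $\epsilon$ one has a genuine continuous solution $u_\epsilon$ with representation by $\omega_{\K}^\epsilon$, the decay estimate is uniform in $\epsilon$, and one passes to the limit $\epsilon\to 0$. If you want a self-contained barrier-type argument you would need to replace the single comparison by a scale-iterated oscillation estimate that also controls the $(Y,t)$-inflow data (this is what a Kolmogorov-adapted Carleson/oscillation iteration does), or simply invoke the boundary H\"older estimate from \cite{LN1} as the paper does.

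A secondary point: your proof of the maximum principle, testing with a truncation of $(u_\varphi-\|\varphi\|_\infty)_+$ over the unbounded set $\Omega\times\mathbb R^m\times\mathbb R$, needs a cutoff in $(Y,t)$ and a justification that the boundary terms produced by the transport operator have the right sign on the outflow boundary; this is not automatic in the unbounded setting. The paper instead obtains the maximum principle as a corollary of the uniqueness in Theorem \ref{weakdp1gagain}, whose proof is via regularization and the exterior decay of the fundamental solution of $\K$, which is cleaner in the unbounded geometry.
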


This paper is motivated, through our recent  studies \cite{LN1,LN2,LN3}, by a growing need and interest  to gain a deeper understanding of the Dirichlet problem for the operator $\L$ in Lipschitz type domains. In \cite{LN1,LN2,LN3} we have developed a number of results and estimates concerning solutions to $\L u=0$ in Lipschitz type domains adapted to the dilation structure and the (non-Euclidean) Lie group underlying
    the operator $\L$. Indeed, in \cite{LN1} our results include scale and translation invariant boundary comparison principles,
boundary Harnack inequalities and doubling properties of associated parabolic measures. In \cite{LN2} we establish results concerning associated parabolic measures and their absolute continuity with respect to a surface measure, and in particular we study  when the associated Radon-Nikodym derivative defines an $A_\infty$-weight with respect to the surface measure. Finally, in \cite{LN3} we establish a structural theorem concerning the absolute continuity of elliptic and parabolic measures which allowed us to reprove several results previously established in the literature as well as deduce new results in, for example, the context of homogenization for operators of Kolmogorov type.  Our proof of the structural theorem is based on some of the results in \cite{LN1}  concerning boundary Harnack inequalities for operators of Kolmogorov type in divergence form with bounded, measurable and uniformly elliptic coefficients. An impetus for developing \cite{LN1,LN2,LN3} has been the recent results concerning  the local regularity of weak solutions to the equation in \eqref{dpweak} established in \cite{GIMV}. In \cite{GIMV} the authors extended the  De Giorgi-Nash-Moser (DGNM) theory, which in its original form only considers elliptic
or parabolic equations in divergence form, to hypoelliptic equations with rough coefficients including the operator $\L$. Their result is the correct scale- and translation-invariant estimates for local H{\"o}lder continuity and the Harnack inequality for weak solutions.

The point is that in \cite{LN1,LN2,LN3}, as well as \cite{GIMV}, questions concerning the existence and uniqueness of weak solutions to Dirichlet and Cauchy-Dirichlet problems for the operator $\L$ are constantly present and we have not been able to find, beyond what we have learned from \cite{AM}, coherent treatments in the literature of the problems in \eqref{dpweak}, \eqref{dpweak+gal}. This has been the main motivation for completing this work.

We prove Theorem \ref{weakdp1}  by proving that the solution to \eqref{dpweak} can be obtained as the minimizer of a uniformly convex functional. The fact that a parabolic equation can be cast as the first variation of a uniformly convex integral functional was first discovered in~\cite{BE1,BE2} and for a modern treatment of this approach, covering uniformly elliptic parabolic equations of second order in the more general context of uniformly monotone operators, we refer to \cite{ABM} which in turn is closely related to \cite{ghoussoub-tzou}, see also \cite{GBook}. More precisely, this paper is inspired by the interesting paper of S. Armstrong and J-C. Mourrat \cite{AM} concerning variational methods for the kinetic Fokker-Planck equation. In \cite{AM}, S. Armstrong and J-C. Mourrat consider {Kramers equation} and the time-dependent version of this equation which, in our coordinates, reads
\begin{equation}\label{e.thepde.witht}
\partial_t f -\Delta_X f + X \cdot \nabla_X f - X \cdot \nabla_Y f + {\bf b}\cdot\nabla_X f = f^* \quad \mbox{in}  \
\mathbb R^m\times U_Y\times\mathbb R_+.
\end{equation}
This equation is often referred to as the kinetic Fokker-Planck equation. In \cite{AM}, $f(v,x,t)$ is a function of velocity, position and time, in our notation $(v,x,t)=(X,Y,t)$. In \cite{AM}, S. Armstrong and J-C. Mourrat develop a functional analytic approach to the study of the Kramers and kinetic Fokker-Planck equations which parallels the classical $H^1$ theory of uniformly elliptic equations. In particular, they identify a function space analogous to $H^1$ and develop a well-posedness theory for weak solutions of the Dirichlet problem in this space. They prove new functional inequalities of Poincar\'e and H\"ormander type and use these to obtain the~$C^\infty$ regularity of weak solutions. They also use the Poincar\'e-type inequality established to give an elementary proof of the exponential convergence to equilibrium for solutions of the kinetic Fokker-Planck equation which mirrors the classic dissipative estimate for the heat equation.

The proof of Theorem \ref{weakdp1}  follows the proof developed in \cite{AM} for the Kramers and kinetic Fokker-Planck equations.  To compare our setting to that of \cite{AM},  we first note that the kinetic Fokker-Planck equation is different from the operator $\L$ and in \cite{AM} the authors consider domains of the form $\mathbb R^m\times V_{Y,t}$, where $V_{Y,t}$ is as above, or domains of the form $\mathbb R^m\times U_Y\times J$ where
$U_Y\subset\mathbb R^m$ is a bounded domain with $C^1$-boundary and $J\subset\mathbb R$ is a bounded interval. In this sense they consider unbounded domains and their natural function space in the variable $X$ is the Sobolev space which uses $\exp(-|X|^2/2)\d X$ as the underlying measure. The unbounded setting, and the Gaussian Poincar\'e inequality, force the authors in \cite{AM} to establish new Poincar\'e inequalities as a novel part of their argument. In this paper we initially work on truly bounded domains $U_X\times V_{Y,t}$ assuming that
$U_X\subset\mathbb R^m$ is a bounded Lipschitz domain and our result can be readily extended to domains of the form $U_X\times U_Y\times J$. Taking the analysis on $U_X\times V_{Y,t}$ as the starting point we then, as discussed, consider weak solutions in the  unbounded setting $\Omega\times \mathbb R^m\times\mathbb R$  assuming that $\Omega\subset\mathbb R^m$ is an (unbounded) Lipschitz domain in the sense of \eqref{Lip}. In particular, compared to \cite{AM} we in the unbounded case have a geometric restriction in the $X$-variable and none in the $(Y,t)$-variables. The latter geometric setting is, in particular, in line with the set up in \cite{LN3}.  In Theorem \ref{weakdp1gagain} and Theorem \ref{DP} the assumption in \eqref{comp} is only used in a qualitative fashion.

As a general comment, one reason to establish the existence and uniqueness of weak solutions in Sobolev spaces encoding integrability up to the boundary of weak derivatives, in this case for $\nabla_Xu$, is that one can then often relax the assumption on the coefficients to being only measurable, bounded and elliptic when proving boundary type estimates of the type considered in \cite{LN1,LN2,LN3}. As another general comment, we believe that it is a good project to develop versions of the estimates in \cite{LN1,LN2,LN3} in the geometric settings of $U_X\times V_{Y,t}$ and $U_X\times U_Y\times J$ and in particular near the subsets of
    $\partial_\K(U_X\times V_{Y,t})$ and $\partial_\K(U_X\times U_Y\times J)$ defined by
    \begin{eqnarray*}
\{(X,Y,t)\in \overline{U_X}\times \partial V_{Y,t}\mid (X,-1)\cdot N_{Y,t}>0\}\mbox{ and }\{(X,Y,t)\in \overline{U_X}\times \partial U_{Y}\times J\mid X\cdot N_{Y}>0\},
    \end{eqnarray*}
    where now $N_{Y}$ denotes the outer unit normal to $U_{Y}$.

    The rest of the paper is organized as follows. In Section \ref{Sec1} we, in the geometric setting of $U_X\times V_{Y,t}$, introduce function spaces, define weak solutions and
    prove Poincar\'e inequalities. While strictly speaking not necessary in our setting, we here prove a version on bounded domains, Lemma \ref{lemma1a} below, of the novel Poincar\'e inequality established in \cite{AM}. In Section \ref{Sec2} we prove Theorem \ref{weakdp1} following \cite{AM}.  In Section \ref{Sec3} we prove Theorems
    \ref{weakdp1++}-\ref{DP}.

    \section{Preliminaries}\label{Sec1}

    Throughout the section we let $U_X\subset\mathbb R^{m}$ and $V_{Y,t}\subset \mathbb R^{m}\times \mathbb R$  be bounded domains. {In addition we assume that $U_X\subset\mathbb R^{m}$ is a Lipschitz domain and that and $V_{Y,t}$ is a domain with boundary which is $C^{1,1}$-smooth.}

    \subsection{Function spaces} We denote by ${H}_X^1(U_X)$ the Sobolev
space of functions $g\in L_{}^2(U_X)$  whose distribution gradient in $U_X$ lies in $(L^2(U_X))^m$, i.e.
  \begin{eqnarray*}\label{fspace-}
{H}_X^1(U_X):=\{g\in L_{X}^2(U_X)\mid \nabla_Xg\in (L^2(U_X))^m\},
    \end{eqnarray*}
    and we set
    $$||g||_{{H}_X^1(U_X)}:=||g||_{L^2(U_X)}+|||\nabla_Xg|||_{L^2(U_X)},\ g\in {H}_X^1(U_X).$$
    We let ${H}_{X,0}^1(U_X)$ denote the closure of $C_0^\infty(U_X)$ in the norm of ${H}_X^1(U_X)$ and we recall, as $U_X$ is a bounded Lipschitz domain, that
    $C^\infty(\overline{U_X})$ is dense in ${H}_X^1(U_X)$. In particular, equivalently we could define ${H}_X^1(U_X)$ as the closure of $C^\infty(\overline{U_X})$
     in the norm $||\cdot||_{{H}_X^1(U_X)}$.
     {Note that as ${H}_{X,0}^1(U_X)$ is a Hilbert space it is reflexive, so that $({H}_{X,0}^1(U_X))^\ast=H_X^{-1}(U_X)$ and $(H_X^{-1}(U_X))^\ast={H}_{X,0}^1(U_X)$, where $()^\ast$ denotes the dual.
     Based on this we let ${H}_X^{-1}(U_X)$ denote the dual to ${H}_{X,0}^1(U_X)$ acting on functions in ${H}_{X,0}^1(U_X)$ through the
     duality pairing $\langle \cdot,\cdot\rangle:=\langle \cdot,\cdot\rangle_{H_X^{-1}(U_X),H_{X,0}^{1}(U_X)}$.}

     In analogy with the definition of ${H}_X^1(U_X)$, we let $W(U_X\times V_{Y,t})$ be the closure of $C^\infty(\overline{U_X\times V_{Y,t}})$ in the norm
     \begin{align}\label{weak1-+}
     ||u||_{W(U_X\times V_{Y,t})}&:=||u||_{L_{Y,t}^2(V_{Y,t},H_X^1(U_X))}+||(-X\cdot\nabla_Y+\partial_t)u||_{L_{Y,t}^2(V_{Y,t},{H}_X^{-1}(U_X))}\notag\\
     &:=\biggl (\iint_{V_{Y,t}}||u(\cdot,Y,t)||_{{H}_X^1(U_X)}^2\, \d Y\d t\biggl )^{1/2}\\
     &\quad +\biggl (\iint_{V_{Y,t}}||(-X\cdot\nabla_Y+\partial_t)u(\cdot,Y,t)||_{{H}_X^{-1}(U_X)}^2\, \d Y\d t\biggl )^{1/2}.\notag
    \end{align}
    In particular, $W(U_X\times V_{Y,t})$ is a Banach space and if $u\in W(U_X\times V_{Y,t})$ then \begin{eqnarray}\label{weak1-}
u\in L_{Y,t}^2(V_{Y,t},H_X^1(U_X))\mbox{ and } (-X\cdot\nabla_Y+\partial_t)u\in  L_{Y,t}^2(V_{Y,t},H_X^{-1}(U_X)).
    \end{eqnarray}
    Note that the dual of $L_{Y,t}^2(V_{Y,t},H_{X,0}^1(U_X))$, denoted by $(L_{Y,t}^2(V_{Y,t},H_{X,0}^1(U_X)))^\ast$, satisfies $$(L_{Y,t}^2(V_{Y,t},H_{X,0}^1(U_X)))^\ast=L_{Y,t}^2(V_{Y,t},H_X^{-1}(U_X)),$$
    and, as mentioned above,
    $$(L_{Y,t}^2(V_{Y,t},H_X^{-1}(U_X)))^\ast=L_{Y,t}^2(V_{Y,t},H_{X,0}^1(U_X)).$$

    The topological boundary of $U_X\times V_{Y,t}$ is denoted by $\partial(U_X\times V_{Y,t})$. Let $N_{Y,t}$ denote the outer unit normal to $V_{Y,t}$. We define a subset
    $\partial_{\K}(U_X\times V_{Y,t})\subset\partial(U_X\times V_{Y,t})$, the Kolmogorov boundary of $U_X\times V_{Y,t}$, as in \eqref{Kolb}. We let $C^\infty_{\K,0}(\overline{U_X\times V_{Y,t}})$, $C^\infty_{X,0}(\overline{U_X\times V_{Y,t}})$ and $C^\infty_{Y,t,0}(\overline{U_X\times V_{Y,t}})$ consist of the functions in
$C^\infty(\overline{U_X\times V_{Y,t}})$ which vanish on $\partial_{\K}(U_X\times V_{Y,t})$, $\{(X,Y,t)\in \partial{U_X}\times \overline{V_{Y,t}}\}$ and
$\{(X,Y,t)\in \overline{U_X}\times \partial V_{Y,t}\mid (X,-1)\cdot N_{Y,t}>0\}$, respectively.  We let $W_0(U_X\times V_{Y,t})$, $W_{X,0}(U_X\times V_{Y,t})$ and
$W_{Y,t,0}(U_X\times V_{Y,t})$ denote the closure in the norm of
$W(U_X\times V_{Y,t})$ of $C^\infty_{\K,0}(\overline{U_X\times V_{Y,t}})$, $C^\infty_{X,0}(\overline{U_X\times V_{Y,t}})$ and $C^\infty_{Y,t,0}(\overline{U_X\times V_{Y,t}})$, respectively.

We end the subsection with the following elementary lemma.
\begin{lemma}\label{funkid} Let $U_X$ and $V_{Y,t}$ be as above. Then
\begin{equation*}
    \begin{split}
        (i)\quad &W_{X,0}(U_X\times V_{Y,t})\subset
 L_{Y,t}^2(V_{Y,t},H_{X,0}^1(U_X)),\\
 (ii)\quad & L_{Y,t}^2(V_{Y,t},H_{X,0}^1(U_X))\cap W(U_X\times V_{Y,t}) \subset W_{X,0}(U_X\times V_{Y,t}).
    \end{split}
\end{equation*}

\end{lemma}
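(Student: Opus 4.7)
The plan is to treat the two inclusions separately. Part (i) will reduce to a routine closure argument, while part (ii) requires an approximation scheme that exploits the fact that the transport operator $-X\cdot\nabla_Y+\partial_t$ differentiates only in the $(Y,t)$-variables and so has a well-behaved interaction with operations carried out purely in the $X$-variable.

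For (i), I would argue as follows: take $u\in W_{X,0}(U_X\times V_{Y,t})$ and a sequence $u_k\in C^\infty_{X,0}(\overline{U_X\times V_{Y,t}})$ with $u_k\to u$ in the $W$-norm. Each $u_k(\cdot,Y,t)\in C^\infty_0(U_X)\subset H_{X,0}^1(U_X)$, hence $u_k\in L^2_{Y,t}(V_{Y,t},H_{X,0}^1(U_X))$. The $W$-norm dominates the $L^2_{Y,t}(V_{Y,t},H_X^1(U_X))$-norm, and $L^2_{Y,t}(V_{Y,t},H_{X,0}^1(U_X))$ is closed in $L^2_{Y,t}(V_{Y,t},H_X^1(U_X))$ since $H_{X,0}^1(U_X)$ is closed in $H_X^1(U_X)$; consequently the limit $u$ lies in $L^2_{Y,t}(V_{Y,t},H_{X,0}^1(U_X))$.

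For (ii), given $u\in L^2_{Y,t}(V_{Y,t},H_{X,0}^1(U_X))\cap W(U_X\times V_{Y,t})$, I plan a two-stage regularization to produce approximants in $C^\infty_{X,0}(\overline{U_X\times V_{Y,t}})$. In Stage A I smooth $u$ in the $(Y,t)$-variables: extend $u$ to a slightly larger set in $(Y,t)$ using a bounded linear extension operator (available by the $C^{1,1}$-regularity of $\partial V_{Y,t}$, applied pointwise in $X$), and then convolve with a standard mollifier $\rho_k$ in $(Y,t)$ to obtain $u^{(k)}$. Since $H_{X,0}^1(U_X)$ is a closed subspace of $H_X^1(U_X)$ and each $u^{(k)}(\cdot,Y,t)$ is a Bochner average of slices of $u$ belonging to $H_{X,0}^1(U_X)$, I retain $u^{(k)}(\cdot,Y,t)\in H_{X,0}^1(U_X)$. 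Because $(Y,t)$-convolution commutes with multiplication by $X$ and with $\nabla_Y,\partial_t$, one has $(-X\cdot\nabla_Y+\partial_t)u^{(k)}=\rho_k*_{Y,t}(-X\cdot\nabla_Y+\partial_t)u$, and standard mollification arguments yield $u^{(k)}\to u$ in the $W$-norm. In Stage B I approximate each $u^{(k)}$ in the $X$-variable: a finite partition of unity subordinate to an atlas that flattens $\partial U_X$ (available by the Lipschitz regularity) localizes the construction, and on each patch one translates by a small amount $\epsilon$ into $U_X$ and then mollifies in $X$ with a compactly supported kernel $K_\delta$ of radius $\delta<\epsilon$, producing $u^{(k,\delta)}\in C^\infty_{X,0}(\overline{U_X\times V_{Y,t}})$. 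Convergence $u^{(k,\delta)}\to u^{(k)}$ in $L^2_{Y,t}(V_{Y,t},H_X^1(U_X))$ as $\delta\to 0$ is then classical, and for the transport term I intend to use the commutator identity
\begin{equation*}
(-X\cdot\nabla_Y+\partial_t)(T^X_\delta u^{(k)})-T^X_\delta(-X\cdot\nabla_Y+\partial_t) u^{(k)}=\int K_\delta(X-X')(X'-X)\cdot\nabla_Y u^{(k)}(X',Y,t)\,\d X',
\end{equation*}
where $T^X_\delta$ denotes the $X$-convolution operator. This yields the bound $C\delta\|\nabla_Y u^{(k)}(\cdot,Y,t)\|_{L^2(U_X)}$ in the $L^2(U_X)$-norm, and hence also in the $H_X^{-1}(U_X)$-norm; since $u^{(k)}$ is smooth in $(Y,t)$, the commutator vanishes in $L^2_{Y,t}(V_{Y,t},H_X^{-1}(U_X))$ as $\delta\to 0$, and $u^{(k,\delta)}\to u^{(k)}$ in the $W$-norm.

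A standard diagonal argument, picking $\delta(k)\to 0$ rapidly enough, will then deliver a sequence $u^{(k,\delta(k))}\in C^\infty_{X,0}(\overline{U_X\times V_{Y,t}})$ converging to $u$ in the $W$-norm, proving (ii). The main obstacle is precisely the control of this commutator in the dual norm $L^2_{Y,t}(V_{Y,t},H_X^{-1}(U_X))$: a generic $u\in W$ controls only the combination $-X\cdot\nabla_Y u+\partial_t u$, not $\nabla_Y u$ or $\partial_t u$ separately, which is why the Stage A pre-smoothing in $(Y,t)$ is essential to turn $\nabla_Y u^{(k)}$ into a bona fide $L^2$-function before any $X$-mollification is applied.
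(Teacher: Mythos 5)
Part (i) matches the paper's argument. For part (ii), the paper gives only a one-line remark (that the smooth elements of the two sets coincide) and leaves the underlying density step implicit; your two-stage mollification is a genuinely different and more explicit route, and Stage B (localize near $\partial U_X$, translate inward, convolve in $X$, estimate the commutator in $L^2_{Y,t}(V_{Y,t},H^{-1}_X(U_X))$) is sound --- you correctly isolate the point that only $(-X\cdot\nabla_Y+\partial_t)u$, and not $\nabla_Y u$ by itself, is a priori controlled, which is exactly why the $(Y,t)$-pre-smoothing is needed before the $X$-commutator can be estimated.

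The gap is in Stage A. You assert $(-X\cdot\nabla_Y+\partial_t)u^{(k)}=\rho_k*_{Y,t}\bigl((-X\cdot\nabla_Y+\partial_t)u\bigr)$, but since $u^{(k)}=\rho_k*_{Y,t}(Eu)$ the correct identity is $(-X\cdot\nabla_Y+\partial_t)u^{(k)}=\rho_k*_{Y,t}\bigl((-X\cdot\nabla_Y+\partial_t)(Eu)\bigr)$, which coincides with your formula only at points $(Y,t)$ whose distance to $\partial V_{Y,t}$ exceeds the mollification radius. Near $\partial V_{Y,t}$ the convolution sees $(-X\cdot\nabla_Y+\partial_t)(Eu)$ on the enlarged set, and a generic $(Y,t)$-Sobolev extension $E$ does not intertwine with the transport vector field $(-X,1)$: nothing in the definition of $W(U_X\times V_{Y,t})$ gives control of $(-X\cdot\nabla_Y+\partial_t)(Eu)$ in $L^2_{Y,t}(\cdot,H^{-1}_X(U_X))$ on that strip in terms of $\|u\|_{W(U_X\times V_{Y,t})}$. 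So the claimed $W$-norm convergence $u^{(k)}\to u$ is unproven. To repair it you would need a transport-compatible extension --- for instance passing to characteristic coordinates $(\tilde Y,\tilde t)=(Y+tX,t)$, in which $-X\cdot\nabla_Y+\partial_t$ becomes $\partial_{\tilde t}$ and a reflection in $\tilde t$ extends the pair $(u,\partial_{\tilde t}u)$ simultaneously, at the cost of working on an $X$-dependent image domain --- or else an argument showing the boundary-strip contribution tends to zero by other means.
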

\begin{proof}
For any $\phi\in C^\infty_{X,0}(\overline{U_X\times V_{Y,t}})$ we have $\phi\in L_{Y,t}^2(V_{Y,t},H_{X,0}^1(U_X))$. By definition, given  $u\in W_{X,0}(U_X\times V_{Y,t})$ there exists a sequence $\phi_j\in C^\infty_{X,0}(\overline{U_X\times V_{Y,t}})$ such that
\[
\|u-\phi_j\|_{W(U_X\times V_{Y,t})}\rightarrow 0\mbox{ as }j\to\infty.
\]
But then
\begin{equation*}
    \| u-\phi_j \|_{L_{Y,t}^2(V_{Y,t},H_{X,0}^1(U_X))} \leq  \| u-\phi_j \|_{W(U_X\times V_{Y,t})}\rightarrow 0.
\end{equation*}
and hence $(i)$ follows.  To prove $(ii)$, simply note that
\begin{equation*}
L_{Y,t}^2(V_{Y,t},H_{X,0}^1(U_X))\cap W(U_X\times V_{Y,t})\cap C^\infty(\overline{U_X\times V_{Y,t}})
\end{equation*}
coincides with
\begin{equation*}
W_{X,0}(U_X\times V_{Y,t}) \cap C^\infty(\overline{U_X\times V_{Y,t}}).
\end{equation*}
This completes the proof.
\end{proof}

\subsection{Weak solutions}

\begin{definition}\label{defwe} Let $g\in W(U_X\times V_{Y,t})$ and $g^*\in L_{Y,t}^2(V_{Y,t},{H}_X^{-1}(U_X))$. We then say that $u$ is a weak solution to
    \begin{equation}\label{dpweak+}
\begin{cases}
	\L u = g^*  &\text{in} \ U_X\times V_{Y,t}, \\
      u = g  & \text{on} \ \partial_{\K}(U_X\times V_{Y,t}),
\end{cases}
\end{equation}
    if
                      \begin{eqnarray}\label{weak1}
u\in W(U_X\times V_{Y,t}),\ (u-g)\in  W_0(U_X\times V_{Y,t}),
    \end{eqnarray}
    and if
\begin{equation}\label{weak3}
    \begin{split}
     0 =&\iiint_{U_X\times V_{Y,t}}\ A(X,Y,t)\nabla_Xu\cdot \nabla_X\phi\, \d X \d Y \d t\\
    &+\iint_{V_{Y,t}}\ \langle g^*(\cdot,Y,t)+(-X\cdot\nabla_Y+\partial_t)u(\cdot,Y,t),\phi(\cdot,Y,t)\rangle\, \d Y \d t,
\end{split}
\end{equation}
for all $ \phi\in L_{Y,t}^2(V_{Y,t},H_{X,0}^1(U_X))$ and where $\langle \cdot,\cdot\rangle=\langle \cdot,\cdot\rangle_{H_X^{-1}(U_X),H_{X,0}^{1}(U_X)}$ is the duality pairing in $H_X^{-1}(U_X)$.
\end{definition}

Note that if $u$ is a weak solution to the equation $\L  u=0$  in $U_X\times V_{Y,t}$, then it is a weak solution in the sense of distributions, i.e.
                              \begin{eqnarray}\label{weak4}
                              \iiint_{}\ \bigl(A(X,Y,t)\nabla_Xu\cdot \nabla_X\phi-u(-X\cdot \nabla_Y+\partial_t)\phi\bigr )\, \d X \d Y \d t=0,
                               \end{eqnarray}
                               whenever $\phi\in C_0^\infty(U_X\times V_{Y,t})$.

\subsection{Poincar\'e inequalities}
\begin{lemma}\label{lemma1a-} There exists a constant $c$, $1\leq c<\infty$, depending only on $m$, and the diameter of $U_X$, such that
$$||f||_{L_{Y,t}^2(V_{Y,t},L^2(U_X))}\leq c||\nabla_Xf||_{L_{Y,t}^2(V_{Y,t},L^2(U_X))},$$
for all {$f\in L_{Y,t}^2(V_{Y,t},H_{X,0}^1(U_X))$}. 
\end{lemma}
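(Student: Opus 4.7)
The plan is to reduce the statement to the classical scalar Poincar\'e inequality for $H_{X,0}^1(U_X)$ by a slicing argument in the $(Y,t)$ variables, followed by Fubini/Tonelli.

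First, I would recall the standard Poincar\'e inequality on bounded domains: since $U_X \subset \mathbb{R}^m$ is bounded (it is contained in a ball of radius $\operatorname{diam}(U_X)$), there exists a constant $c_0 = c_0(m,\operatorname{diam}(U_X))$ such that
\begin{equation*}
\|g\|_{L^2(U_X)} \leq c_0 \|\nabla_X g\|_{L^2(U_X)} \quad \text{for all } g \in H_{X,0}^1(U_X).
\end{equation*}
This is the classical Friedrichs/Poincar\'e inequality, proved by extending $g$ by zero and using the representation $g(x) = \int_{-R}^{x_m} \partial_{x_m} g(x',s)\,\d s$, followed by Cauchy--Schwarz, where $R$ is chosen so that $U_X \subset \{|x_m| < R\}$.

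Next, given $f \in L_{Y,t}^2(V_{Y,t}, H_{X,0}^1(U_X))$, by the definition of the Bochner space the slice $f(\cdot, Y, t)$ lies in $H_{X,0}^1(U_X)$ for a.e.\ $(Y,t) \in V_{Y,t}$. Applying the scalar Poincar\'e inequality pointwise in $(Y,t)$ and squaring yields
\begin{equation*}
\|f(\cdot, Y, t)\|_{L^2(U_X)}^2 \leq c_0^2 \|\nabla_X f(\cdot, Y, t)\|_{L^2(U_X)}^2
\end{equation*}
for a.e.\ $(Y,t)$. Integrating this inequality over $V_{Y,t}$ and using Tonelli's theorem gives
\begin{equation*}
\iint_{V_{Y,t}}\|f(\cdot, Y, t)\|_{L^2(U_X)}^2\,\d Y\d t \leq c_0^2 \iint_{V_{Y,t}}\|\nabla_X f(\cdot, Y, t)\|_{L^2(U_X)}^2\,\d Y\d t,
\end{equation*}
which is exactly the desired estimate with $c = c_0$.

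There is no real obstacle here; the only point requiring mild care is ensuring that the slicewise application of the boundary-trace-zero Poincar\'e inequality is legitimate, which follows because $L_{Y,t}^2(V_{Y,t}, H_{X,0}^1(U_X))$ is by construction the space of (equivalence classes of) Bochner-measurable $H_{X,0}^1(U_X)$-valued maps with finite $L^2$ norm, so a.e.\ slice genuinely has zero trace on $\partial U_X$. The resulting constant depends only on $m$ and $\operatorname{diam}(U_X)$, as claimed.
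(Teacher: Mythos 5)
Your proof is correct and follows essentially the same route as the paper: apply the classical Poincar\'e inequality to the slice $f(\cdot,Y,t)\in H_{X,0}^1(U_X)$ for a.e.\ $(Y,t)$, then square and integrate over $V_{Y,t}$. The extra remarks you make about why a.e.\ slice lies in $H_{X,0}^1(U_X)$ are a reasonable elaboration of what the paper leaves implicit.
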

\begin{proof} Consider {$f\in L_{Y,t}^2(V_{Y,t},H_{X,0}^1(U_X))$}. Then, using the standard Euclidean Poincar\'e inequality, we see that
\begin{equation*}  
\|f(\cdot,Y,t)\|_{L^2(U_X))} \leq c\|\nabla_X f(\cdot,Y,t)\|_{L^2(U_X)}.
\end{equation*}
Simply squaring and integrating with respect to $(Y,t)\in V_{Y,t}$ gives the result.
\end{proof}

\begin{lemma}\label{lemma1a} Assume that $0\in U_X$ and that the $m$-dimensional ball $B(0,2R)$ is contained in $U_X$. There exists a constant $c$, $1\leq c<\infty$, depending only on $m$, $U_X\times V_{Y,t}$ and $R$, such that
$$||f||_{L_{Y,t}^2(V_{Y,t},L^2(U_X))}\leq c\bigl (||\nabla_Xf||_{L_{Y,t}^2(V_{Y,t},L^2(U_X))}+||(-X\cdot\nabla_Y+\partial_t)f||_{L_{Y,t}^2(V_{Y,t},{H}_X^{-1}(U_X))}\bigr ),$$
for all {$f\in W_{Y,t,0}(U_X\times V_{Y,t})$.}
\end{lemma}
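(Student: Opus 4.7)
The plan is to argue by contradiction and compactness. Suppose no such constant $c$ exists; then there is a sequence $\{f_n\}\subset W_{Y,t,0}(U_X\times V_{Y,t})$ with $\|f_n\|_{L_{Y,t}^2(V_{Y,t},L^2(U_X))}=1$ and
\[
A_n:=\|\nabla_X f_n\|_{L_{Y,t}^2(V_{Y,t},L^2(U_X))}+\|(-X\cdot\nabla_Y+\partial_t)f_n\|_{L_{Y,t}^2(V_{Y,t},{H}_X^{-1}(U_X))}\longrightarrow 0.
\]
Consequently $\{f_n\}$ is bounded in $W(U_X\times V_{Y,t})$; by reflexivity of $W$ (a closed subspace of a product of reflexive spaces) and closedness of the subspace $W_{Y,t,0}(U_X\times V_{Y,t})$, a subsequence satisfies $f_n\rightharpoonup f$ in $W(U_X\times V_{Y,t})$ with $f\in W_{Y,t,0}(U_X\times V_{Y,t})$.

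Passing to the weak limit in $A_n\to 0$ I would deduce $\nabla_X f\equiv 0$ and $(-X\cdot\nabla_Y+\partial_t)f\equiv 0$ in their respective spaces. The first says $f=f(Y,t)$ is independent of $X$. The second then reads $\partial_t f(Y,t)-X\cdot\nabla_Y f(Y,t)=0$ as an element of $H_X^{-1}(U_X)$ for a.e.\ $(Y,t)$; since $U_X$ is a nonempty open set, the vanishing of this affine-in-$X$ expression forces $\partial_t f\equiv 0$ and $\nabla_Y f\equiv 0$, so $f$ is constant on the connected domain $V_{Y,t}$. Because $f\in W_{Y,t,0}(U_X\times V_{Y,t})$ has vanishing trace on the nonempty Kolmogorov ingoing portion $\{(X,Y,t)\in\overline{U_X}\times\partial V_{Y,t}\mid (X,-1)\cdot N_{Y,t}>0\}$ of the boundary, this constant must equal zero, i.e.\ $f\equiv 0$.

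To contradict $\|f_n\|_{L^2}=1$ I must upgrade to strong $L^2$-convergence. Fix $\chi\in C_c^\infty(B(0,2R))\subset C_c^\infty(U_X)$ with $\int_{U_X}\chi\,\d X=1$ and set $F_n(Y,t):=\int_{U_X}\chi(X)f_n(X,Y,t)\,\d X$. The classical Poincar\'e-Wirtinger inequality in $X$ yields
\[
\|f_n-F_n\|_{L^2(U_X\times V_{Y,t})}\leq c\,\|\nabla_X f_n\|_{L_{Y,t}^2(V_{Y,t},L^2(U_X))}\longrightarrow 0,
\]
so it suffices to show $F_n\to 0$ strongly in $L^2(V_{Y,t})$. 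Since $\chi\in H_{X,0}^1(U_X)$, testing $(-X\cdot\nabla_Y+\partial_t)f_n$ against $\chi$ and integrating by parts in $X$ yields on $V_{Y,t}$ the distributional identity
\[
\partial_t F_n-\nabla_Y\cdot\vec G_n=h_n,
\]
with $\vec G_n(Y,t):=\int_{U_X}X\chi(X)f_n\,\d X$ uniformly bounded in $L^2(V_{Y,t})$ and $h_n(Y,t):=\langle(-X\cdot\nabla_Y+\partial_t)f_n(\cdot,Y,t),\chi\rangle_{H_X^{-1}(U_X),H_{X,0}^1(U_X)}$ tending to zero in $L^2(V_{Y,t})$.

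The main obstacle is extracting strong $L^2(V_{Y,t})$-compactness of $\{F_n\}$: an Aubin--Lions argument does not apply directly, since only one first-order combination of $(Y,t)$-derivatives is controlled, and Lemma \ref{lemma1a-} is unavailable because $f_n$ has no vanishing trace on $\partial U_X$. Here I would invoke an averaging lemma in the spirit of Golse--Lions--Perthame--Sentis, which exploits the structure of the transport operator $-X\cdot\nabla_Y+\partial_t$ to furnish a gain of fractional regularity in $(Y,t)$ for velocity averages such as $F_n$; combined with the Rellich--Kondrachov theorem on the bounded set $V_{Y,t}$, this yields the required strong compactness. By uniqueness of the weak limit, the strong limit of $F_n$ must equal the $\chi$-average of $f\equiv 0$, hence zero, so $f_n\to 0$ strongly in $L^2(U_X\times V_{Y,t})$, contradicting $\|f_n\|_{L^2}=1$.
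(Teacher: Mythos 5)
The paper proves this lemma constructively: it first derives, via explicitly chosen moment test functions $\xi_0,\xi_i\in C_0^\infty(B(0,2R))$, the estimate $\|\partial_t\langle f\rangle_{U_X}\|_{H^{-1}(V_{Y,t})}+\|\nabla_Y\langle f\rangle_{U_X}\|_{H^{-1}(V_{Y,t})}\leq c(\|\nabla_X f\|+\|(-X\cdot\nabla_Y+\partial_t)f\|_{L^2_{Y,t}H_X^{-1}})$; combined with a negative-Sobolev Poincar\'e on $V_{Y,t}$ (Lemma~3.1 of \cite{AM}) and the Euclidean Poincar\'e in $X$ this controls $\|f-\langle f\rangle_{U_X\times V_{Y,t}}\|_{L^2}$; the mean is then controlled by testing against a constructed $f_1$ that vanishes on the outgoing portion of $\partial V_{Y,t}$ and using the vanishing of $f$ on the ingoing portion. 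Your contradiction-and-compactness argument is a genuinely different route, and the first part (extracting a weak limit, showing $\nabla_Xf=0$ and $(-X\cdot\nabla_Y+\partial_t)f=0$ force $f$ constant) is sound — indeed the observation that the affine-in-$X$ quantity must vanish on the open set $U_X$ is in spirit the same moment trick as in the paper. Your reduction via Poincar\'e--Wirtinger to the strong convergence of the average $F_n=\int\chi f_n\,\d X$ is also fine.

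The gap is the crucial compactness step. A velocity-averaging lemma of Golse--Lions--Perthame--Sentis type gives, after cutting off in $(Y,t)$, a gain of fractional regularity and hence strong compactness of $F_n$ only in $L^2_{\mathrm{loc}}(V_{Y,t})$; to reach the required strong convergence in $L^2(V_{Y,t})$ you must rule out concentration of $F_n$ (equivalently of $f_n$, since $\|f_n-F_n\|_{L^2}\to0$) near $\partial V_{Y,t}$. This is precisely where the boundary information encoded in $W_{Y,t,0}$ must enter, but the averaging lemma as you invoke it does not see the boundary: extending $f_n$ by zero outside $V_{Y,t}$ introduces a singular term $f_n\,(X,-1)\cdot N_{Y,t}\,\d\sigma_{Y,t}$ in the transport equation, and controlling or dismissing this term requires a trace estimate on the Kolmogorov boundary which you neither state nor have available — in effect the very quantity one is trying to control. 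The paper avoids this entirely by proving the global estimate $\|f-\langle f\rangle_{U_X\times V_{Y,t}}\|_{L^2}\lesssim \text{RHS}$ directly (no interior-versus-boundary dichotomy), and then handles the mean with the explicit $f_1$. A secondary, smaller issue is the assertion that ``the constant must equal zero'' because $f\in W_{Y,t,0}$: this needs a trace argument on the ingoing part of $\partial_{\K}$ for functions in $W$, or else an integration by parts against a function like the paper's $f_1$; as written it is a plausible but unjustified step. Finally, note that even if both gaps were filled, the compactness route yields only a qualitative constant, whereas the paper's proof is quantitative and elementary.
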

\begin{proof} Consider $f\in  W(U_X\times V_{Y,t})\cap C^\infty(\overline{U_X\times V_{Y,t}})$. In the following we let $\la f \ra_{U_X}$ and $\la f \ra_{U_X\times V_{Y,t}}$ denote the averages of $f=f(X,Y,t)$ over $U_X$ and $U_X\times V_{Y,t}$ with respect to $\d X$ and $\d X \d Y \d t$, respectively. To start the proof of the lemma we first note, using the standard Euclidean Poincar\'e inequality, that
\begin{equation*}  
\|f - \la f \ra_{U_X} \|_{L^2(V_{Y,t},L^2(U_X))} \leq c\|\nabla_X f\|_{L^2(V_{Y,t},L^2(U_X))}.
\end{equation*}

Next we are going to prove that there exists $c$, $1\leq c<\infty$, depending on  $m$, $U_X\times V_{Y,t}$ and $R$, such that if  $\phi \in C^\infty_0(V_{Y,t})$ satisfies
\begin{equation}
\label{e.bounds.phi.kin}
\|\phi\|_{L^2(V_{Y,t})} + \|\nabla_Y \phi\|_{L^2(V_{Y,t})} + \|\partial_t \phi\|_{L^2(V_{Y,t})} \le 1,
\end{equation}
then
\begin{align}
\label{e.bracket.H-1.kin}
&\Ll|\iint_{V_{Y,t}} \phi \, \partial_{t} \la f \ra_{U_X}\, \d Y\d t \Rr| + \sum_{i = 1}^m \Ll|\iint_{V_{Y,t}} \phi \, \partial_{y_i} \la f \ra_{U_X}\, \d Y\d t \Rr| \notag\\
&\le c \bigl( \|\nabla_X f\|_{L^2(V_{Y,t},L^2(U_X))} + \|(-X\cdot\nabla_Y+\partial_t)f  \|_{L^2(V_{Y,t},H_X^{-1}(U_X))} \bigr).
\end{align}
We start by proving the estimate for the time derivative of
$\la f \ra_{U_X}$. To do this  we select a smooth function $\xi_0 \in C^\infty_0(B(0,2R))$ such that
\begin{equation}
\label{e.prop.xi0}
\int_{B(0,2R)} \xi_0(X) \, \d X = 1 \quad \text{ and } \quad \int_{B(0,2R)} x_i \xi_0(X) \, \d X = 0\mbox{ for all $i\in\{1,...,m\}$}.
\end{equation}
Using these properties of $\xi_0$, we can write
\begin{align*}  
& \iint_{V_{Y,t}} \partial_t \phi(Y,t) \, \la f \ra_{U_X}(Y,t) \, \d Y\d t\\
&= \iiint_{U_X\times V_{Y,t}} \xi_0(X) \bigl(\partial_t \phi(Y,t) - X \cdot \nabla_Y \phi(Y,t)\bigr) \la f \ra _{U_X}(Y,t) \, \d X\d Y\d t\\
&=I_1+I_2,
\end{align*}
where
\begin{align*}  
I_1&:= \iiint_{U_X\times V_{Y,t}} \xi_0(X) \bigl(\partial_t - X \cdot \nabla_Y \bigr) \phi(Y,t) \,  f(X,Y,t) \, \d X\d Y\d t,\\
I_2&:= \iiint_{U_X\times V_{Y,t}} \xi_0(X) \bigl(\partial_t - X \cdot \nabla_Y \bigr) \phi(Y,t)  \bigl(\la f \ra_{U_X}(Y,t) - f(X,Y,t)\bigr) \, \d X\d Y\d t.
\end{align*}
By integration by parts, 
\begin{equation*}  
|I_1|\le c \|(-X\cdot\nabla_Y+\partial_t)f \|_{L^2(V_{Y,t},H_X^{-1}(U_X))}.
\end{equation*}
Using \eqref{e.bounds.phi.kin} and the fact that $\xi_0$ has compact support,
\begin{equation*}  
|I_2|\leq c\|f - \la f \ra_{U_X} \|_{L^2(V_{Y,t},L^2(U_X))} \leq c\|\nabla_X f\|_{L^2(V_{Y,t},L^2(U_X))}.
\end{equation*}
This completes the proof of the estimate in \eqref{e.bracket.H-1.kin} involving the time derivative. To estimate the terms involving $\partial_{y_i}$, we fix $i \in \{1,\ldots,m\}$ and we choose a smooth function $\xi_i \in C^\infty_0(B(0,2R))$ satisfying
\begin{equation*}  
\int_{B(0,2R)} \xi_i(X) \, \d X = 0 \quad \text{ and } \quad \int_{B(0,2R)} x_j \xi_i(X) \, \d X = \delta_{ij},
\end{equation*}
where $\delta_{ij}$ is the Kronecker delta. Using this we see that
\begin{align*}  
 &\iint_{V_{Y,t}} \phi \, \partial_{y_i} \la f \ra_{U_X}\, \d Y\d t\\
  &= \iiint_{U_X\times V_{Y,t}} \xi_i(X) \bigl(X \cdot \nabla_Y \phi(Y,t)- \partial_t \phi(Y,t)\bigr) \la f \ra_{U_X}(Y,t) \,  \d X\d Y\d t\\
  &=\iiint_{U_X\times V_{Y,t}} \xi_i(X) \bigl(X \cdot \nabla_Y \phi(Y,t)- \partial_t \phi(Y,t)\bigr) f(X,Y,t) \,   \d X\d Y\d t\\
  &\quad +\iiint_{U_X\times V_{Y,t}} \xi_i(X) \bigl(X \cdot \nabla_Y \phi(Y,t)- \partial_t \phi(Y,t)\bigr) (\la f \ra_{U_X}(Y,t)-f(X,Y,t)) \,   \d X\d Y\d t.
\end{align*}
Proceeding as above we deduce that
\begin{align*}  
 \Ll|\iint_{V_{Y,t}} \phi \, \partial_{y_i} \la f \ra_{U_X}\, \d Y\d t\Rr|\leq c\bigl (\|\nabla_X f\|_{L^2(V_{Y,t},L^2(U_X))}+
 \|(-X\cdot\nabla_Y+\partial_t)f \|_{L^2(V_{Y,t},H_X^{-1}(U_X))}\bigr).
\end{align*}
This completes the proof of \eqref{e.bracket.H-1.kin} and we can conclude that
\begin{equation}\label{e.duale}
\begin{split}
&||\partial_{t} \la f\ra_{U_X}||_{H^{-1}(V_{Y,t})}+||\nabla_{Y} \la f\ra_{U_X}||_{H^{-1}(V_{Y,t})}\\
&\leq c\bigl(\|\nabla_X f\|_{L^2(V_{Y,t},L^2(U_X))}+
 \|(-X\cdot\nabla_Y+\partial_t)f \|_{L^2(V_{Y,t},H_X^{-1}(U_X))}\bigr).
 \end{split}
\end{equation}

Using \eqref{e.duale}, and Lemma 3.1 in \cite{AM}, we see that
$$\| f - \la f \ra_{U_X\times V_{Y,t}} \|_{L_{Y,t}^2(V_{Y,t},L^2(U_X))}$$
can be estimated from above by
\begin{align*} \label{}
&
\| f - \left\langle f \right\rangle_{U_X} \|_{L_{Y,t}^2(V_{Y,t},L^2(U_X))}
+ \| \left\langle f \right\rangle_{U_X} - \left\langle f \right\rangle_{U_X\times V_{Y,t}} \|_{L^2(V_{Y,t})}
\\ &
\leq
\| f - \left\langle f \right\rangle_{U_X} \|_{L_{Y,t}^2(V_{Y,t},L^2(U_X))}+c\| \partial_t \left\langle f \right\rangle_{U_X}  \|_{H^{-1}(V_{Y,t})}
+ c\| \nabla_Y \left\langle f \right\rangle_{U_X}  \|_{H^{-1}(V_{Y,t})}
\\ &
\leq
c\bigl(\|\nabla_X f\|_{L^2(V_{Y,t},L^2(U_X))}+
 \|(-X\cdot\nabla_Y+\partial_t)f \|_{L^2(V_{Y,t},H_X^{-1}(U_X))}\bigr).
\end{align*}
In particular, we have proved that
\begin{equation} \label{e.poincare.mean}
\begin{split}
&\| f - \left\langle f \right\rangle_{U_X\times V_{Y,t}} \|_{L_{Y,t}^2(V_{Y,t},L^2(U_X))}\\
&\leq
c\bigl(\|\nabla_X f\|_{L^2(V_{Y,t},L^2(U_X))}+
 \|(-X\cdot\nabla_Y+\partial_t)f \|_{L^2(V_{Y,t},H_X^{-1}(U_X))}\bigr).
 \end{split}
\end{equation}

Note that the above deductions hold for all $f\in  W(U_X\times V_{Y,t})\cap C^\infty(\overline{U_X\times V_{Y,t}})$. Next, to complete the proof of the lemma we consider $f\in  W_{Y,t,0}(U_X\times V_{Y,t})\cap C^\infty(\overline{U_X\times V_{Y,t}})$ and we will use the additional information encoded in
$W_{Y,t,0}(U_X\times V_{Y,t})$ to complete the proof. In fact we will prove that if $f\in  W_{Y,t,0}(U_X\times V_{Y,t})\cap C^\infty(\overline{U_X\times V_{Y,t}})$, then
\begin{equation}
\label{e.meancontrol}
\left| \left\langle f \right\rangle_{U_X\times V_{Y,t}} \right|
\leq
c\bigl(\|\nabla_X f\|_{L^2(V_{Y,t},L^2(U_X))}+
 \|(-X\cdot\nabla_Y+\partial_t)f \|_{L^2(V_{Y,t},H_X^{-1}(U_X))}\bigr).
\end{equation}

Consider $\tilde f_1 \in C^\infty(\overline{U_X\times V_{Y,t}})$ and let $\phi \in C^\infty_0(B(0,2R))$ be such that $\phi\equiv 1$ on $B(0,R)$, $\phi\geq 0$. 
Let $f_1(X,Y,t)=\phi(X)\tilde f_1(X,Y,t)$. Observe that by the divergence theorem
\begin{align*} \label{}
\bariiint_{U_X\times V_{Y,t}}(X\cdot \nabla_Y-\partial_t) f_1\, \d X\d Y\d t
=  \frac{1}{|{U_X\times V_{Y,t}}|} \int_{\partial {V_{Y,t}}} \int_{B(0,2R)} (X,-1)\cdot N_{Y,t} f_1\, \d X\d\sigma_{Y,t},
\end{align*}
where $\sigma_{Y,t}$ is the surface measure on $\partial {V_{Y,t}}$. We now choose a non-trivial $f_1$ so that
$f_1=0$ on $\{(X,Y,t)\in B(0,2R)\times \partial V_{Y,t}\mid (X,-1)\cdot N_{Y,t}\leq 0\}$ and so that, after a normalization,
\begin{align} \label{e.testw.meanone}
\bariiint_{U_X\times V_{Y,t}}(X\cdot \nabla_Y-\partial_t) f_1\, \d X\d Y\d t=1,
\end{align}
and
\begin{equation}
\label{e.testw.L2bounds}
\left\|   (X\cdot \nabla_Y-\partial_t)f_1 \right\|_{L^2(V_{Y,t},L^2(U_X))} \leq c.
\end{equation}
Note that, by construction, $ff_1=0$ on $B(0,2R)\times\partial V_{Y,t}$.

To proceed, using $f_1$ and \eqref{e.testw.meanone} we split the mean of~$f$ as
\begin{equation}\label{decomp}
\begin{split}
\left\langle f \right\rangle_{U_X\times V_{Y,t}}&=\bariiint_{U_X\times V_{Y,t}}f (X\cdot \nabla_Y-\partial_t) f_1 \, \d X\d Y\d t\\
&\quad -\bariiint_{U_X\times V_{Y,t}}\bigl( f - \left\langle f \right\rangle_{U_X\times V_{Y,t}}\bigr) (X\cdot \nabla_Y-\partial_t) f_1 \d X\d Y\d t,
\end{split}
\end{equation}
and we estimate the two terms on the right side separately. For the first term we have
\begin{align*}
&\left|
\bariiint_{U_X\times V_{Y,t}}f (X\cdot \nabla_Y-\partial_t) f_1 \, \d X\d Y\d t
\right|\\
&=
\left| -\bariiint_{U_X\times V_{Y,t}}f_1 (X\cdot \nabla_Y-\partial_t) f \, \d X\d Y\d t\right|\\
&\quad + \frac{1}{|U_X\times V_{Y,t}|} \left |\int_{\partial V_{Y,t}} \int_{B(0,2R)} (X,-1)\cdot N_{Y,t} ff_1\, \d X\d\sigma_{Y,t} \right|
\\ &
= \left|\bariiint_{U_X\times V_{Y,t}}f_1 (X\cdot \nabla_Y-\partial_t) f \, \d X\d Y\d t\right|,
\end{align*}
where we used that $(X,-1)\cdot N_{Y,t} ff_1$ vanishes on $B(0,2R)\times\partial V_{Y,t}$ to remove the boundary integral.  Note that
\begin{align*} \label{}
&\left| \bariiint_{U_X\times V_{Y,t}}f_1 (X\cdot \nabla_Y-\partial_t) f \, \d X\d Y\d t\right|\\
&\leq
\frac1{|U_X\times V_{Y,t}|}
 \left\| f_1 \right\|_{L_{Y,t}^2(V_{Y,t},H^1_X(U_X))}
\left\| (X\cdot \nabla_Y-\partial_t) f\right\|_{L_{Y,t}^2(V_{Y,t},H^{-1}_X(U_X))}.
\end{align*}
This completes the estimate for the first term in \eqref{decomp}. For the second term in \eqref{decomp}, we use \eqref{e.testw.L2bounds} to derive
\begin{align*}
&\left|
\bariiint_{U_X\times V_{Y,t}}\bigl( f - \left\langle f \right\rangle_{U_X\times V_{Y,t}}\bigr) (X\cdot \nabla_Y-\partial_t) f_1 \d X\d Y\d t
\right|\\
&\leq
\| f - \left\langle f \right\rangle_{U_X\times V_{Y,t}} \|_{L^2_{Y,t}(V_{Y,t},L^2(U_X))}
\|  (X\cdot \nabla_Y-\partial_t) f_1 \|_{L^2_{Y,t}(V_{Y,t},L^2(U_X))}
\\
&
\leq c\| f - \left\langle f \right\rangle_{U_X\times V_{Y,t}} \|_{L^2_{Y,t}(V_{Y,t},L^2(U_X))}.
\end{align*}
The last term is then estimated using \eqref{e.poincare.mean}.
{This finishes the proof for $f\in  W_{Y,t,0}(U_X\times V_{Y,t})\cap C^\infty(\overline{U_X\times V_{Y,t}})$. Finally, we recall that $W_{Y,t,0}(U_X\times V_{Y,t})$ is defined as the closure of $C^\infty_{Y,t,0}(\overline{U_X\times V_{Y,t}})$ in the norm of $W(U_X\times V_{Y,t})$, and hence a standard density argument finishes the proof for $f\in  W_{Y,t,0}(U_X\times V_{Y,t})$.}

\end{proof}

\section{Proof of Theorem \ref{weakdp1}}\label{Sec2}

The purpose of this section is to prove Theorem \ref{weakdp1}. We therefore fix $U_X\times V_{Y,t}\subset\mathbb R^{N+1}$, $g^*$ and $g$ as in the statement of Theorem \ref{weakdp1}.  To ease the notation we will at instances use the notation
$$W:=W(U_X\times V_{Y,t}),\ W_0:=W_{0}(U_X\times V_{Y,t}),\ W_{Y,t,0}:=W_{Y,t,0}(U_X\times V_{Y,t}).$$

Given an arbitrary pair $(f,f^\ast)$ such that \begin{eqnarray}\label{weak1-a}
f\in L_{Y,t}^2(V_{Y,t},H_X^1(U_X))\mbox{ and } f^\ast-(X\cdot\nabla_Y-\partial_t)f\in  L_{Y,t}^2(V_{Y,t},H_X^{-1}(U_X)),
    \end{eqnarray}
     we set
     \begin{equation}
\label{e.def.J}
J[f,f^*] := \inf \iiint_{U_X\times V_{Y,t}} \frac 1 2 (A(\nabla_X f - \g))\cdot (\nabla_X f - \g) \, \d X\d Y\d t,
\end{equation}
where the infimum is taken with respect to the set
\begin{equation}
\label{e.def.J+}
\bigl\{ \g  \in (L^2(V_{Y,t},L^2(U_X)))^m \mid {\nabla_X \cdot (A\g)} = f^* - (X\cdot\nabla_Y-\partial_t)f \bigr\}.
\end{equation}
The condition
\begin{equation*}  
{\nabla_X \cdot (A\g)}  = f^* - (X\cdot\nabla_Y-\partial_t)f,
\end{equation*}
appearing in \eqref{e.def.J+}, should be interpreted as stating that
\begin{equation}
\label{e.g.condition}
- \iiint_{U_X\times V_{Y,t}} A\g  \cdot \nabla_X \phi \, \d X\d Y\d t = \iint_{V_{Y,t}} \langle f^*(\cdot,Y,t) - (X\cdot\nabla_Y-\partial_t)f(\cdot,Y,t), \phi\rangle \, \d Y\d t,
\end{equation}
for all $\phi \in L^2(V_{Y,t},H^1_{X,0}(U_X))$.

We intend to prove, given $(g,g^*)\in  W(U_X\times V_{Y,t})\times L_{Y,t}^2(V_{Y,t},{H}_X^{-1}(U_X))$, that the functional
\begin{equation*}  \label{prob}
 (g+W_{0})\owns f \mapsto J[f,g^*],
\end{equation*}
is uniformly convex, that its minimum is zero,  and that the associated minimizer is the unique $f \in (g+W_{0})$ that solves the equation
$\L  f=g^*$ in $U_X\times V_{Y,t}$ in the sense that
\begin{equation*}
    \begin{split}
     0 =&\iiint_{U_X\times V_{Y,t}}\ A(X,Y,t)\nabla_Xf\cdot \nabla_X\phi\, \d X \d Y \d t\\
    &+\iint_{V_{Y,t}}\ \langle g^*(\cdot,Y,t) - (X\cdot\nabla_Y-\partial_t)f(\cdot,Y,t),\phi(\cdot,Y,t)\rangle\, \d Y \d t,
\end{split}
\end{equation*}
for all $\phi\in L_{Y,t}^2(V_{Y,t},H_{X,0}^1(U_X))$. Note that by construction this means that $f\in W(U_X\times V_{Y,t})$ is a solution to the problem in \eqref{dpweak} in the sense of Definition \ref{defwe}. In particular, $f-g\in W_0(U_X\times V_{Y,t})$ holds as desired.

\begin{lemma}\label{1stlemma}
    Let $(g,g^*)\in  W(U_X\times V_{Y,t})\times L_{Y,t}^2(V_{Y,t},{H}_X^{-1}(U_X))$ be fixed and let $\A(g,g^\ast)$ be the set
    \begin{equation*}   \{ (f, \j) \in (g + W_{0})\times( L^2(V_{Y,t},L^2(U_X)))^m \mid \nabla_X\cdot(A(X,Y,t)\j) = g^\ast- (X\cdot\nabla_Y-\partial_t)f \}.
\end{equation*}
Then $\A(g,g^\ast)$ is non-empty.
\end{lemma}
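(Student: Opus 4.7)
The plan is to exhibit an explicit element of $\A(g,g^*)$ by taking $f=g$ and constructing a suitable $\j$. With this choice, the condition $f\in g+W_0$ is trivially satisfied (since $0 \in W_0$), so the problem reduces to finding
\[
\j \in (L^2_{Y,t}(V_{Y,t},L^2(U_X)))^m \quad \text{with} \quad \nabla_X\cdot(A\j) = h,
\]
where $h := g^*-(X\cdot\nabla_Y-\partial_t)g$. Note that by the assumptions $g\in W(U_X\times V_{Y,t})$ and $g^*\in L^2_{Y,t}(V_{Y,t},H_X^{-1}(U_X))$, together with the definition of the $W$-norm in \eqref{weak1-+}, we have $h \in L^2_{Y,t}(V_{Y,t},H_X^{-1}(U_X))$.

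To construct $\j$, I would solve an elliptic problem slice-by-slice in the variables $(Y,t)$. For a.e.\ $(Y,t)\in V_{Y,t}$, consider the elliptic Dirichlet problem on $U_X$: find $w(\cdot,Y,t)\in H^1_{X,0}(U_X)$ such that
\[
\iiint_{U_X}A(X,Y,t)\nabla_X w(\cdot,Y,t)\cdot\nabla_X\varphi \, \d X = -\langle h(\cdot,Y,t),\varphi\rangle
\]
for all $\varphi\in H^1_{X,0}(U_X)$. Since $A(\cdot,Y,t)$ satisfies \eqref{eq:Aellip} uniformly in $(Y,t)$, the Lax--Milgram theorem (combined with the Poincar\'e inequality of Lemma \ref{lemma1a-} on the slice) produces a unique solution $w(\cdot,Y,t)$ and the quantitative estimate
\[
\|\nabla_X w(\cdot,Y,t)\|_{L^2(U_X)} \leq c(m,\kappa,U_X) \, \|h(\cdot,Y,t)\|_{H_X^{-1}(U_X)}.
\]
Setting $\j(X,Y,t) := \nabla_X w(X,Y,t)$, squaring and integrating this inequality over $V_{Y,t}$ yields
\[
\|\j\|_{L^2_{Y,t}(V_{Y,t},L^2(U_X))} \leq c\,\|h\|_{L^2_{Y,t}(V_{Y,t},H_X^{-1}(U_X))} < \infty,
\]
so $\j$ lies in the required space, and by construction $\nabla_X\cdot(A\j)=h$ in the sense of \eqref{e.g.condition}.

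The one nontrivial point, which I expect to be the main (though standard) obstacle, is verifying that the slice-wise solutions $w(\cdot,Y,t)$ piece together into a jointly measurable function of $(X,Y,t)$ so that $\j$ is a bona fide element of $(L^2_{Y,t}(V_{Y,t},L^2(U_X)))^m$. This follows from the standard theory of elliptic equations with parameters: one can approximate $h$ in $L^2_{Y,t}(V_{Y,t},H_X^{-1}(U_X))$ by simple (step) functions of $(Y,t)$, solve the problem for each constant value of the parameter, and pass to the limit using the uniform Lax--Milgram estimate; the resulting limit function is measurable in $(Y,t)$ with values in $H^1_{X,0}(U_X)$, hence defines an $L^2$ gradient on $U_X\times V_{Y,t}$. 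With the pair $(g,\nabla_X w)$ in hand, $\A(g,g^*)$ is seen to be non-empty.
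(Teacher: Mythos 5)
Your proof is correct and follows essentially the same route as the paper: take $f=g$, solve the elliptic Dirichlet problem slice-by-slice in $(Y,t)$ via Lax--Milgram, and set $\j=\nabla_X w$. The paper leaves the measurability-in-$(Y,t)$ point implicit, so your explicit treatment of it is a welcome addition but not a different argument.
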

\begin{proof}
Let $f=g$  and consider the equation
\begin{align}\label{ex1}
&{\nabla_X \cdot (A(X,Y,t)\nabla_Xv(X,Y,t))} = (g^\ast(X,Y,t)- (X\cdot\nabla_Y-\partial_t)f(X,Y,t))\in H_X^{-1}(U_X),
\end{align}
for $\d Y\d t$-a.e $(Y,t)\in V_{Y,t}$. By the Lax-Milgram theorem this equation has a (unique) solution $v(\cdot)=v(\cdot,Y,t)\in H^1_{X,0}(U_X)$ and
\begin{align}  \label{ex2}
 ||\nabla_Xv||_{L_{Y,t}^2(V_{Y,t},L^2(U_X))}\leq c||g^\ast-(X\cdot\nabla_Y-\partial_t)f||_{L_{Y,t}^2(V_{Y,t},{H}_X^{-1}(U_X))}<\infty,
\end{align}
as $f\in W$. In particular,
\begin{equation}\label{nempty}
(g,\nabla_Xv)\in \A(g,g^\ast),
\end{equation} and hence $\A(g,g^\ast)$ is non-empty.
\end{proof}


\begin{lemma} For every $f \in (g+W_0)$ and $\j \in  (L^2(V_{Y,t},L^2(U_X)))^m$, define
\begin{equation*}  
\J[f,\j] := \iiint_{U_X\times V_{Y,t}} \frac 1 2 (A(\nabla_X f - \j))\cdot (\nabla_X f - \j) \, \d X\d Y\d t.
\end{equation*}
Then the functional $\J$ is uniformly convex on $\A(g,g^*)$.
\end{lemma}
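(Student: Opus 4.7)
The plan is to exploit the quadratic structure of $\J$ in the residual $\nabla_X f - \j$, together with the affine character of the constraint defining $\A(g,g^*)$. I would first note that $\A(g,g^*)$ is convex: the ambient product $(g + W_{0}) \times (L^2(V_{Y,t}, L^2(U_X)))^m$ is affine, the constraint $\nabla_X \cdot (A\j) = g^* - (X \cdot \nabla_Y - \partial_t) f$ is affine in the pair $(f,\j)$, and $\A(g,g^*)$ is the intersection of these two affine conditions, hence convex.

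Fix $(f_0,\j_0), (f_1,\j_1) \in \A(g,g^*)$ and $\lambda \in [0,1]$. Set $(f_\lambda, \j_\lambda) := (1-\lambda)(f_0,\j_0) + \lambda(f_1,\j_1) \in \A(g,g^*)$ and $u_i := \nabla_X f_i - \j_i$, so that $u_\lambda := \nabla_X f_\lambda - \j_\lambda = (1-\lambda)u_0 + \lambda u_1$. The symmetry of $A$ yields the pointwise parallelogram identity
\begin{align*}
&(1-\lambda)\,\tfrac{1}{2} A u_0 \cdot u_0 + \lambda \,\tfrac{1}{2} A u_1 \cdot u_1 - \tfrac{1}{2}A u_\lambda \cdot u_\lambda \\
&\quad = \lambda(1-\lambda)\,\tfrac{1}{2}\,A(u_0 - u_1)\cdot(u_0 - u_1),
\end{align*}
and integrating over $U_X \times V_{Y,t}$ gives
\begin{equation*}
(1-\lambda)\J[f_0,\j_0] + \lambda \J[f_1,\j_1] - \J[f_\lambda,\j_\lambda] = \lambda(1-\lambda)\,\J[f_0 - f_1, \j_0 - \j_1].
\end{equation*}

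To conclude, I would apply the ellipticity bound $A\xi \cdot \xi \geq \kappa^{-1} |\xi|^2$ from \eqref{eq:Aellip} pointwise, obtaining
\begin{equation*}
\J[f_0 - f_1, \j_0 - \j_1] \geq \tfrac{1}{2\kappa} \iiint_{U_X \times V_{Y,t}} \bigl|\nabla_X(f_0 - f_1) - (\j_0 - \j_1)\bigr|^2 \d X \d Y \d t,
\end{equation*}
and hence, setting $c := 1/(2\kappa)$,
\begin{equation*}
(1-\lambda)\J[f_0,\j_0] + \lambda \J[f_1,\j_1] - \J[f_\lambda,\j_\lambda] \geq c\,\lambda(1-\lambda)\,\bigl\|\nabla_X(f_0 - f_1) - (\j_0 - \j_1)\bigr\|_{(L^2(V_{Y,t},L^2(U_X)))^m}^2,
\end{equation*}
which is the uniform convexity estimate.

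There is essentially no real obstacle in this argument; it reduces to the symmetric-positive-definite parallelogram identity combined with the lower ellipticity bound. The only point worth flagging is the choice of norm: what is controlled on differences of elements of $\A(g,g^*)$ is the energy seminorm $(f,\j) \mapsto \|\nabla_X f - \j\|_{L^2}$, rather than any norm on the factors $f$ and $\j$ individually. Strict positivity of this quantity (so that it is in fact a norm on the affine set) will be a consequence of the uniqueness statement for the Cauchy-Dirichlet problem established later in the section, and is not required for this lemma.
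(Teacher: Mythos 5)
Your argument is correct as far as it goes, and the parallelogram identity and ellipticity bound are exactly what the paper uses at the starting point. But you stop too early, and you stop at precisely the place where the actual work happens. What you establish is
\[
(1-\lambda)\J[f_0,\j_0] + \lambda\J[f_1,\j_1] - \J[f_\lambda,\j_\lambda]
\;\ge\; c\,\lambda(1-\lambda)\,\|\nabla_X(f_0-f_1) - (\j_0-\j_1)\|_{L^2}^2,
\]
which is convexity modulated by the \emph{seminorm} $(f,\j)\mapsto\|\nabla_Xf - \j\|_{L^2}$. This quantity vanishes whenever $\j = \nabla_X f$, so by itself it does not yield uniform convexity on the ambient Banach space, and therefore does not yield the existence and uniqueness of the minimizer $(f_1,\j_1)\in\A(g,g^*)$ which the paper immediately invokes after this lemma. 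Your closing remark that strict positivity ``will be a consequence of the uniqueness statement for the Cauchy--Dirichlet problem established later in the section'' is circular: the uniqueness of the minimizer is what produces the uniqueness of the weak solution, not the other way around.

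The missing content, and the heart of the paper's proof, is the upgrade of the seminorm to the norm $\|f\|_W + \||\j|\|_{L^2}$ by exploiting the constraint. For $(f,\j)\in\A(0,0)$ one has $\nabla_X\cdot(A\j) = -(X\cdot\nabla_Y-\partial_t)f$ with $f\in W_0$, and testing this against $f$ itself (which is admissible by Lemma \ref{funkid}) gives
\[
\J[f,\j] = \iiint\Bigl(\tfrac12 A\nabla_Xf\cdot\nabla_Xf + \tfrac12 A\j\cdot\j\Bigr)\,\d X\d Y\d t
- \iint\langle(X\cdot\nabla_Y-\partial_t)f,f\rangle\,\d Y\d t .
\]
The cross term is then shown to be nonpositive by approximating $f$ by elements of $C^\infty_{\K,0}$, integrating by parts, and using the sign convention built into the definition of the Kolmogorov boundary in \eqref{Kolb} --- this is the step encoded in \eqref{apa} and \eqref{apa+}. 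After that, the constraint $\nabla_X\cdot(A\j) = -(X\cdot\nabla_Y-\partial_t)f$ bounds $\|(-X\cdot\nabla_Y+\partial_t)f\|_{L^2(V_{Y,t},H^{-1}_X)}$ by $\||\j|\|_{L^2}$, and the Poincar\'e inequality (Lemma \ref{lemma1a-} or Lemma \ref{lemma1a}) controls $\|f\|_{L^2}$. Assembled, these give \eqref{e.convex}, i.e.\ $\J[f,\j]\ge c^{-1}(\|f\|_W^2 + \||\j|\|_{L^2}^2)$ on $\A(0,0)$, which is the genuine uniform convexity estimate. None of these three steps --- the sign of the skew term, the constraint estimate, the Poincar\'e bound --- appear in your argument, and without them the lemma as stated is not proved.
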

\begin{proof} By Lemma \ref{1stlemma} we know that $\A(g,g^*)\neq \emptyset$. A straightforward calculation shows that
\begin{equation*}  
\frac 1 2 \J[f' + f, \j' + \j] + \frac 1 2\J[f'-f, \j' - \j] - \J[f',\j'] = \J[f,\j],
\end{equation*}
whenever $(f',\j') \in \A(g,g^*)$ and $(f,\j) \in \A(0,0)$. Hence, to prove the lemma it suffices to prove that there exists $1\leq c < \infty$,  depending on $m$, $\kappa$, and $U_X\times V_{Y,t}$, such that for every $(f,\j) \in \A(0,0)$ we have
\begin{equation}
\label{e.convex}
 \J[f,\j] \ge c^{-1} \bigl(\|f\|_{W}^2 + \||\j|\|_{L^2(V_{Y,t}, L^2(U_X))}^2 \bigr).
\end{equation}
Considering $(f,\j) \in \A(0,0)$, {by expanding the integrand in the definition of $\J[f,\j]$, using symmetry of $A$, and using that} $$\nabla_X\cdot(A(X,Y,t) \j) = - (X\cdot\nabla_Y-\partial_t)f,$$ we find
\begin{align*}  
\J[f,\j] &= \iiint_{U_X\times V_{Y,t}} \bigl(\frac 1 2 A\nabla_X f\cdot \nabla_X f + \frac 1 2 A\j\cdot\j\bigr) \, \d X\d Y\d t\notag\\
&\quad - \iint_{V_{Y,t}} \langle(X\cdot\nabla_Y-\partial_t)f(\cdot,Y,t),f(\cdot,Y,t)\rangle \, \d Y\d t.
\end{align*}
If $(f,\j) \in \A(0,0)$, then $f\in W_{0}$.  Recall that $W_0=W_0(U_X\times V_{Y,t})$ is the closure in the norm of
$W(U_X\times V_{Y,t})$ of $C^\infty_{\K,0}(\overline{U_X\times V_{Y,t}})$. In particular, there exists $\{f_j\}$, $f_j\in C^\infty_{\K,0}(\overline{U_X\times V_{Y,t}})$ such that
$$||f-f_j||_W\to 0\mbox{ as }j\to \infty,$$
and consequently
$$||(X\cdot\nabla_Y-\partial_t)(f-f_j)||_{L_{Y,t}^2(V_{Y,t},{H}_X^{-1}(U_X))}\to 0\mbox{ as }j\to \infty.$$
Using this we see that
\begin{equation}  \label{apa}
\begin{split}
&\iint_{V_{Y,t}} \langle(X\cdot\nabla_Y-\partial_t)f(\cdot,Y,t),f(\cdot,Y,t)\rangle \, \d Y\d t\\
&\leq \limsup_{j\to\infty}
\iint_{V_{Y,t}} \langle(X\cdot\nabla_Y-\partial_t)f_j(\cdot,Y,t),f_j(\cdot,Y,t)\rangle \, \d Y\d t.
\end{split}
\end{equation}
However, using that $f_i\in C^\infty_{\K,0}(\overline{U_X\times V_{Y,t}})$  we see that
\begin{equation} \label{apa+}
\begin{split}
&\iint_{V_{Y,t}} \langle(X\cdot\nabla_Y-\partial_t)f_j(\cdot,Y,t),f_j(\cdot,Y,t)\rangle \, \d Y\d t\\
&=\iiint_{U_X\times V_{Y,t}} (X\cdot\nabla_Y-\partial_t)f_j f_j \, \d X\d Y\d t\\
&=\frac 1 2\iiint_{U_X\times V_{Y,t}} (X\cdot\nabla_Y-\partial_t)f_j^2 \, \d X\d Y\d t\\
&=\frac 12 \int_{U_X}\iint_{\partial V_{Y,t}} f_j^2(X,-1)\cdot N_{Y,t} \, \d \sigma_{Y,t}\d X\leq 0,
\end{split}
\end{equation}
by the divergence theorem and the definition of the Kolmogorov boundary. Hence we can conclude that
\begin{align*}  
\J[f,\j] \geq c^{-1} \iiint_{U_X\times V_{Y,t}} \bigl(|\nabla_X f|^2 +|\j|^2\bigr) \, \d X\d Y\d t,
\end{align*}
for all $(f,\j) \in \A(0,0)$. To prove  \eqref{e.convex}, and hence that $\J$ is uniformly convex on $\A(g,g^*)$, it therefore suffices to prove that
\begin{align*}  
&\iiint_{U_X\times V_{Y,t}} |f|^2 \, \d X\d Y\d t+ \iint_{V_{Y,t}}||(-X\cdot\nabla_Y+\partial_t)f(\cdot,Y,t)||_{{H}_X^{-1}(U_X)}^2\, \d Y\d t\notag\\
&\leq c \iiint_{U_X\times V_{Y,t}} \bigl(|\nabla_X f|^2 +  |\j|^2\bigr) \, \d X\d Y\d t,
\end{align*}
whenever $(f,\j) \in \A(0,0)$. However, using that $\nabla_X\cdot(A(X,Y,t)\j) = - (X\cdot\nabla_Y-\partial_t)f$ we immediately see that
\begin{align*}  
\iint_{V_{Y,t}}||(-X\cdot\nabla_Y+\partial_t)f(\cdot,Y,t)||_{{H}_X^{-1}(U_X)}^2\, \d Y\d t\leq c \iiint_{U_X\times V_{Y,t}}  |\j|^2 \, \d X\d Y\d t,
\end{align*}
by the boundedness of $A$.  Furthermore, using the  Poincar\'e inequality of Lemma \ref{lemma1a-}, or Lemma \ref{lemma1a}, we can conclude that
\begin{align*}  
c^{-1}\iiint_{U_X\times V_{Y,t}} |f|^2 \, \d X\d Y\d t\leq& \iiint_{U_X\times V_{Y,t}}|\nabla_X f|^2 \, \d X\d Y\d t\\
&+\iint_{V_{Y,t}}||(-X\cdot\nabla_Y+\partial_t)f(\cdot,Y,t)||_{{H}_X^{-1}(U_X)}^2\, \d Y\d t,
\end{align*}
for all $f\in W_{0}$. Put together this yields \eqref{e.convex} and the proof of the lemma is complete.
\end{proof}

As the functional $\J$ is uniformly convex over $\A(g,g^*)$ there exist a unique minimizing pair $(f_1,\j_1)\in \A(g,g^*)$ such that
\begin{equation*}  
(f_1,\j_1):=\argmin_{(f,\j)\in \A(g,g^*)} \J[f,\j] =\argmin_{(f,\j)\in \A(g,g^*)} \iiint_{U_X\times V_{Y,t}} \frac 1 2 (A(\nabla_X f - \j))\cdot (\nabla_X f - \j) \, \d X\d Y\d t.
\end{equation*}
Note that, by construction and by the ellipticity of $A$, we have
\begin{equation*}  
J[f_1,g^*] \ge 0.
\end{equation*}

\begin{lemma} There is a one-to-one correspondence between weak solutions to $\L  u=g^*$ in $U_X\times V_{Y,t}$, such that $(u-g)\in W_0$,  and null minimizers of $J[\cdot,g^*]$.
\end{lemma}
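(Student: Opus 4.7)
My plan is to establish the bijection by noticing that the divergence constraint \eqref{e.g.condition} defining admissibility of $(f, \j)$ in $\A(g, g^\ast)$, evaluated at $\j = \nabla_X f$, is identical to the weak formulation \eqref{weak3} of $\L f = g^\ast$; everything else is a consequence of the uniform ellipticity of $A$ and the strict convexity of $\J[f, \cdot\,]$ already in hand.

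For the forward direction, let $u \in W$ be a weak solution to $\L u = g^\ast$ with $(u - g) \in W_0$. I would set $\j := \nabla_X u$, which lies in $(L^2(V_{Y,t},L^2(U_X)))^m$ because $u \in W \subset L_{Y,t}^2(V_{Y,t},H_X^1(U_X))$. Rearranging \eqref{weak3} gives
\[
- \iiint_{U_X\times V_{Y,t}} A \nabla_X u \cdot \nabla_X \phi \, \d X \d Y \d t = \iint_{V_{Y,t}} \langle g^\ast - (X \cdot \nabla_Y - \partial_t) u, \phi \rangle \, \d Y \d t,
\]
which is exactly \eqref{e.g.condition} with $\j = \nabla_X u$. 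Therefore $(u, \nabla_X u) \in \A(g, g^\ast)$ and $\J[u, \nabla_X u] = 0$, so $0 \le J[u, g^\ast] \le \J[u, \nabla_X u] = 0$, showing that $u$ is a null minimizer.

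For the backward direction, let $f \in g + W_0$ satisfy $J[f, g^\ast] = 0$. I would first note that the admissible set $\{\j : (f, \j) \in \A(g, g^\ast)\}$ is non-empty: slicewise in $(Y,t)$ the Lax--Milgram argument of Lemma \ref{1stlemma} applied with $f$ replacing $g$ (legitimate since $f \in W$ ensures $(X \cdot \nabla_Y - \partial_t) f \in L^2_{Y,t}(V_{Y,t},H_X^{-1}(U_X))$) produces some admissible $\j_0$. Hence $J[f, g^\ast] = 0$ is a genuine infimum, and there is a minimizing sequence $\j_n$ with $\J[f, \j_n] \to 0$. Uniform ellipticity \eqref{eq:Aellip} gives
\[
\kappa^{-1} \iiint_{U_X\times V_{Y,t}} |\nabla_X f - \j_n|^2 \, \d X \d Y \d t \le 2 \J[f, \j_n] \to 0,
\]
so $\j_n \to \nabla_X f$ strongly in $(L^2(V_{Y,t},L^2(U_X)))^m$. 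Because \eqref{e.g.condition} is linear and continuous in $\j$ (using the boundedness of $A$), the limit $\nabla_X f$ also satisfies \eqref{e.g.condition}, which after rearrangement is precisely \eqref{weak3}. Together with the assumption $(f - g) \in W_0$, this identifies $f$ as a weak solution in the sense of Definition \ref{defwe}.

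The main (really the only) conceptual step is the identification of \eqref{e.g.condition} at $\j = \nabla_X f$ with \eqref{weak3}; the analytical work --- coercivity, convexity, and non-emptiness of the admissible set --- has already been carried out in the preceding lemmas, so no genuine obstacle appears here.
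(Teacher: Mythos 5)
Your proof is correct and takes essentially the same approach as the paper's: the key step in both is observing that the divergence constraint \eqref{e.g.condition} evaluated at $\j = \nabla_X f$ coincides with the weak formulation \eqref{weak3}, together with the ellipticity estimate $\J[f,\j]\geq c^{-1}\|\nabla_X f - \j\|_{L^2}^2$. The only mild difference is in the backward direction: the paper invokes the already-established unique minimizing pair $(f_1,\j_1)$ of $\J$ over $\A(g,g^*)$ and concludes $f=f_1$, $\nabla_X f_1=\j_1$, whereas you argue directly via a minimizing sequence $\j_n\to\nabla_X f$ in $L^2$ and then pass to the limit in the linear constraint. Your route is slightly more self-contained since it does not need the attainment of the slicewise infimum to be taken for granted, but it proves the same thing by the same underlying idea.
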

\begin{proof} To prove the lemma we need to prove that for every $f \in g + W_{0}$, we have
\begin{align*}  
f \mbox{ solves $\L  u=g^*$ in the weak sense in $U_X\times V_{Y,t}$} \iff J[f,g^*] = 0.
\end{align*}
Indeed, the implication ''$\implies$'' is clear since if $f$ solves $\L  u=g^*$ in the weak sense, then
\begin{equation*}  
(f,\nabla_X f) \in \A(g,g^*) \ \text{ and } \ \J[f,\nabla_X f] = 0=J[f,g^*].
\end{equation*}
Conversely, if $J[f,g^*] = 0$, then $f = f_1$ and $\J[f_1,\j_1] = 0$. This implies that
\begin{equation*}  
\nabla_X f_1 = \j_1 \qquad \mbox{a.e. in } U_X\times V_{Y,t},
\end{equation*}
and since $\nabla_X\cdot(A\j_1) = g^*-(X\cdot\nabla_Y-\partial_t)f_1$, we recover that $f = f_1$ is indeed a weak solution of $\L  u=g^*$. In particular, the fact that there is at most one solution to $\L  u=g^*$ is clear.
\end{proof}

To complete the proof of Theorem \ref{weakdp1} it remains to prove that
\begin{equation}
\label{e.null.min}
J[f_1,g^*] \le 0.
\end{equation}
In order to do so, we introduce the perturbed convex minimization problem defined, for every $f^* \in L_{Y,t}^2(V_{Y,t},H_X^{-1}(U_X))$, by
\begin{equation*}  
G(f^*) := \inf_{f \in W_{0}}\bigl ( J[f + g, f^*+g^* ]  {- \iint_{ V_{Y,t}} \langle f^*(\cdot,Y,t),f(\cdot,Y,t)\rangle\, \d Y\d t\bigr ).}
\end{equation*}
As \begin{equation*}  
G(0) = \inf_{f \in W_{0}} J[f + g, g^* ],
\end{equation*}
we see that the inequality in \eqref{e.null.min} that we intend to prove can be equivalently stated as $G(0) \le 0$.

We first intend to verify that the function $G$ is convex and then reduce the problem of showing \eqref{e.null.min} to that of showing that the convex dual of $G$ is non-negative.

\begin{lemma} $G$ is a convex, locally bounded from above and lower semi-continuous functional on $L_{Y,t}^2(V_{Y,t},H_X^{-1}(U_X))$.
\end{lemma}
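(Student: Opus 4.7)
The plan is to reformulate $G(f^*)$ so that $f^*$ appears only through an affine feasibility constraint and the objective becomes a fixed convex functional of $(f,\g)$; from this representation all three required properties will follow from standard convex analysis.

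First, I would expand the inner infimum in the definition of $J$ to write $G(f^*) = \inf\{\Phi(f,\g,f^*) : (f,\g) \in E(f^*)\}$, where $E(f^*)$ is the set of pairs $(f,\g) \in W_0 \times (L^2(V_{Y,t},L^2(U_X)))^m$ satisfying $\nabla_X\cdot(A\g) = f^*+g^* - (X\cdot\nabla_Y-\partial_t)(f+g)$, and
\[
\Phi(f,\g,f^*) = \iiint \tfrac12 (A(\nabla_X(f+g)-\g))\cdot(\nabla_X(f+g)-\g)\,\d X\d Y\d t - \iint \langle f^*,f\rangle\,\d Y\d t.
\]
For $(f,\g)\in E(f^*)$, I would then substitute $f^*$ from the constraint into the pairing $-\iint\langle f^*,f\rangle$ and integrate by parts in the $X$-variable, which is justified since $W_0 \subset W_{X,0}$ forces $f(\cdot,Y,t)\in H^1_{X,0}(U_X)$ for a.e.~$(Y,t)$. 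After expansion and cancellation of the cross term $-\iiint A\g\cdot\nabla_X(f+g) + \iiint A\g\cdot\nabla_X f = -\iiint A\g\cdot\nabla_X g$, the $f^*$-dependence of the objective disappears and one is left with $G(f^*)=\inf\{\Phi^*(f,\g):(f,\g)\in E(f^*)\}$, where
\begin{align*}
\Phi^*(f,\g) &= \iiint \bigl(\tfrac12 A\nabla_X(f+g)\cdot\nabla_X(f+g) + \tfrac12 A\g\cdot\g - A\g\cdot\nabla_X g\bigr)\,\d X\d Y\d t\\
&\quad + \iint \langle g^*,f\rangle\,\d Y\d t - \iint \langle(X\cdot\nabla_Y-\partial_t)(f+g),f\rangle\,\d Y\d t
\end{align*}
is a fixed functional independent of $f^*$.

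Second, I would check joint convexity of $\Phi^*$ in $(f,\g)$. The first bracketed integral is, up to terms linear in $(f,\g)$ generated by the fixed $g$, a positive semi-definite quadratic form in $(\nabla_X f, \g)$ by ellipticity of $A$. The summand $-\iint\langle(X\cdot\nabla_Y-\partial_t)f,f\rangle$ is, for $f \in C^\infty_{\K,0}(\overline{U_X\times V_{Y,t}})$, exactly the non-negative boundary quantity $-\tfrac12 \int_{U_X}\iint_{\partial V_{Y,t}} f^2(X,-1)\cdot N_{Y,t}\,\d\sigma_{Y,t}\d X$ by the divergence-theorem calculation already carried out in \eqref{apa+}, and this non-negative quadratic form extends by density to $W_0$; all remaining pieces are linear in $(f,\g)$. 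Since the constraint defining $E(f^*)$ is affine-linear jointly in $(f,\g,f^*)$, convex combinations of feasible points for $f_1^*$ and $f_2^*$ are feasible for $\lambda f_1^*+(1-\lambda)f_2^*$, and the standard $\epsilon$-optimizer argument yields $G(\lambda f_1^*+(1-\lambda)f_2^*)\le\lambda G(f_1^*)+(1-\lambda)G(f_2^*)$.

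Finally, for local boundedness from above I would simply test $G(f^*)$ at the admissible pair $(0,\g_{f^*})\in E(f^*)$, where $\g_{f^*}$ is the Lax--Milgram solution, constructed exactly as in the proof of Lemma~\ref{1stlemma}, of $\nabla_X\cdot(A\g_{f^*})=f^*+g^*-(X\cdot\nabla_Y-\partial_t)g$; the resulting estimate $\|\g_{f^*}\|_{L^2}\le c(\|f^*\|_{L^2_{Y,t}(V_{Y,t},H_X^{-1}(U_X))}+\|g\|_W+\|g^*\|_{L^2_{Y,t}(V_{Y,t},H_X^{-1}(U_X))})$ yields $G(f^*)\le\Phi^*(0,\g_{f^*})\le C(1+\|f^*\|^2)$, uniformly bounded on norm balls. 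Lower semi-continuity is then automatic from the classical fact that a convex functional on a Banach space bounded above in a neighborhood of every point is continuous there, hence in particular lower semi-continuous. I expect the only delicate step to be justifying the $X$-integration by parts for $f\in W_0$ (so that the $f^*$-cancellation in the reformulation is rigorous); once this density/IBP argument is in place, as in the preceding uniform convexity lemma, the rest reduces to routine checks.
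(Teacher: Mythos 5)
Your proposal is correct and follows essentially the same route as the paper: you reformulate $G(f^*)$ as the infimum of a jointly convex functional over the affine feasibility set, obtain local boundedness from above by plugging in the Lax--Milgram pair $(0,\g_{f^*})$, and invoke the standard ``convex $+$ locally bounded above $\Rightarrow$ continuous, hence l.s.c.'' principle from Ekeland--Temam. The only (cosmetic) difference is that you substitute the constraint into the pairing $-\iint\langle f^*,f\rangle$ and integrate by parts against $f\in L^2_{Y,t}(V_{Y,t},H^1_{X,0}(U_X))$, which removes $f^*$ from the objective altogether, whereas the paper integrates the cross term by parts directly and is left with the residual linear term $\iint\langle f^*,g\rangle$ -- either way the objective remains jointly convex, so the conclusion is the same.
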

\begin{proof} For every pair $(f,\j)$ satisfying $(f+g,\j) \in \mcl A(g,f^*+g^*)$, we have
\begin{equation*}
\nabla_X \cdot (A \j) = f^*+g^* - (X\cdot\nabla_Y-\partial_t)(f + g),
\end{equation*}
and thus
\begin{align*}  
\J[f+g,\j] & = \iiint_{U_X\times V_{Y,t}} \frac 1 2 (A(\nabla_X (f+g)-\j))\cdot (\nabla_X (f+g)-\j) \, \d X\d Y\d t \\
& =\iiint_{U_X\times V_{Y,t}}  \frac 1 2 (A\nabla_X (f+g))\cdot(\nabla_X (f+g))  + \frac 1 2 A\j\cdot\j\, \d X\d Y\d t\\
&\quad -\iint_{V_{Y,t}}   \langle (X\cdot\nabla_Y-\partial_t)(f+g)(\cdot,Y,t),(f+g)(\cdot,Y,t)\rangle\, \d Y\d t\notag\\
&\quad {+\iint_{V_{Y,t}}   \langle (f^*+g^*)(\cdot,Y,t),(f+g)(\cdot,Y,t)\rangle\, \d Y\d t.}
\end{align*}
In particular, {subtracting  $$\iint_{V_{Y,t}} \langle f^*(\cdot,Y,t),f(\cdot,Y,t)\rangle\, \d Y\d t$$ from} the expression above we see that
\begin{align*}  
&\J[f+g,\j] {-\iint_{V_{Y,t}} \langle f^*(\cdot,Y,t),f(\cdot,Y,t)\rangle\, \d Y\d t} \notag\\
& =\iiint_{U_X\times V_{Y,t}}  \frac 1 2 (A\nabla_X (f+g))\cdot(\nabla_X (f+g))  + \frac 1 2 A\j\cdot\j\, \d X\d Y\d t\\
&\quad -\iint_{V_{Y,t}}   \langle (X\cdot\nabla_Y-\partial_t)(f+g)(\cdot,Y,t),(f+g)(\cdot,Y,t)\rangle\, \d Y\d t\notag\\
&\quad {+\iint_{V_{Y,t}}   \langle f^*(\cdot,Y,t),g(\cdot,Y,t)\rangle+\langle g^*(\cdot,Y,t),(f+g)(\cdot,Y,t)\rangle\, \d Y\d t.}
\end{align*}
Taking the infimum over all $(f,\j)$ satisfying the affine constraint $(f+g,\j) \in \mcl A(g,f^*+g^*)$ we obtain the quantity $G(f^*)$, i.e., $G(f^*)$ can be expressed as
\begin{equation*}  
G(f^*) = \inf_{(f,\j):\  (f+g,\j) \in \mcl A(g,f^*+g^*)}\bigl ( \J[f+g,\j] { - \iint_{V_{Y,t}} \langle f^*(\cdot,Y,t),f(\cdot,Y,t)\rangle\, \d Y\d t} \bigr ).
\end{equation*}
Next, writing
\begin{align*}
&-\iint_{V_{Y,t}}   \langle (X\cdot\nabla_Y-\partial_t)(f+g)(\cdot,Y,t),(f+g)(\cdot,Y,t)\rangle \, \d Y\d t\notag\\
&=-\iint_{V_{Y,t}}   \langle (X\cdot\nabla_Y-\partial_t)f(\cdot,Y,t),f(\cdot,Y,t)\rangle \, \d Y\d t\notag\\
&\quad-\iint_{V_{Y,t}}   \langle (X\cdot\nabla_Y-\partial_t)f(\cdot,Y,t),g(\cdot,Y,t)\rangle \, \d Y\d t\notag\\
&\quad-\iint_{V_{Y,t}}   \langle (X\cdot\nabla_Y-\partial_t)g(\cdot,Y,t),(f+g)(\cdot,Y,t)\rangle \, \d Y\d t,
\end{align*}
we see that we can argue as in \eqref{apa} and \eqref{apa+} to conclude that
\begin{align}  \label{ex3}
&-\iint_{V_{Y,t}}   \langle (X\cdot\nabla_Y-\partial_t)f(\cdot,Y,t),f(\cdot,Y,t)\rangle \, \d Y\d t\notag\\
&=-\frac 12\iint_{U_X\times \partial V_{Y,t}}  f^2(X,Y,t) \, (X,-1)\cdot N_{Y,t}\d\sigma_{Y,t}\d X\\
&=\frac 12\iint_{U_X\times \partial V_{Y,t}}  f^2(X,Y,t) \, |(X,-1)\cdot N_{Y,t}|\d\sigma_{Y,t}\d X,\notag
\end{align}
as $f\in W_0$. In particular, $G(f^*)$ can be expressed as the infimum of
\begin{align*}  
&\iiint_{U_X\times V_{Y,t}}  \frac 1 2 (A\nabla_X (f+g))\cdot(\nabla_X (f+g))  + \frac 1 2 A\j\cdot\j\, \d X\d Y\d t\\
&+\frac 12\iint_{U_X\times \partial V_{Y,t}}  f^2(X,Y,t) \, |(X,-1)\cdot N_{Y,t}|\d\sigma_{Y,t}\d X\notag\\
&-\iint_{V_{Y,t}}   \langle (X\cdot\nabla_Y-\partial_t)f(\cdot,Y,t),g(\cdot,Y,t)\rangle \, \d Y\d t\notag\\
&-\iint_{V_{Y,t}}   \langle (X\cdot\nabla_Y-\partial_t)g(\cdot,Y,t),(f+g)(\cdot,Y,t)\rangle \, \d Y\d t\notag\\
&{+\iint_{V_{Y,t}}   \langle f^*(\cdot,Y,t),g(\cdot,Y,t)\rangle+\langle g^*(\cdot,Y,t),(f+g)(\cdot,Y,t)\rangle\, \d Y\d t,}
\end{align*}
with respect to $(f,\j)$ such that $(f+g,\j) \in \mcl A(g,f^*+g^*)$. The expression in the last display is convex as a function of $(f,f^*,\j)$ and this proves that $G$ is convex. Furthermore, using \eqref{ex1}, \eqref{ex2}, \eqref{nempty}, and \eqref{ex3} we can conclude that the {infimum of the expression in} the last display is finite, hence $G(f^*)<\infty$. In particular, the function $G$ is locally bounded from above. These two properties imply that $G$ is lower semi-continuous, see \cite[Lemma~I.2.1 and Corollary~I.2.2]{ET}.
\end{proof}

 We denote by $G^*$ the convex dual of $G$, defined for every
$$h \in (L_{Y,t}^2(V_{Y,t},H_X^{-1}(U_X)))^\ast=L_{Y,t}^2(V_{Y,t},H_{X,0}^1(U_X)),$$ as
\begin{equation*}  
G^*(h) := \sup_{f^* \in L_{Y,t}^2(V_{Y,t},H_X^{-1}(U_X))} \bigl( -G(f^*) + \iint_{V_{Y,t}} \langle f^*(\cdot,Y,t),h(\cdot,Y,t)\rangle\, \d Y\d t \bigr).
\end{equation*}
Let $G^{**}$ be the bidual of $G$. Since $G$ is lower semi-continuous, we have that $G^{**} = G$ (see \cite[Proposition~I.4.1]{ET}), and in particular,
\begin{equation*}  
G(0) = G^{**}(0) = \sup_{h \in L_{Y,t}^2(V_{Y,t},H_{X,0}^1(U_X))} \bigl( -G^*(h) \bigr) .
\end{equation*}
In order to prove that $G(0) \le 0$, it therefore suffices to show that
\begin{equation}
\label{e.get.to.nullmin}
G^*(h) \ge 0\mbox{ for all } h \in L_{Y,t}^2(V_{Y,t},H_{X,0}^1(U_X)).
\end{equation}

To continue we note that we can rewrite $G^*(h)$ as
\begin{equation}
\label{e.second.G*}
\begin{split}
G^*(h) = \sup_{(f,\j,f^*)} &\bigg\{ \iiint_{U_X\times V_{Y,t}} -\frac 1 2 (A(\nabla_X (f+g)-\j))\cdot (\nabla_X (f+g)-\j) \, \d X\d Y\d t\\
&+\iint_{V_{Y,t}} {\langle f^*(\cdot,Y,t),(h(\cdot,Y,t)+f(\cdot,Y,t))\rangle}\, \d Y\d t\bigg \},
\end{split}
\end{equation}
where the supremum is taken with respect to $$(f,\j,f^*)\in W_{0}\times (L_{Y,t}^2(V_{Y,t},L^2_X(U_X)))^m\times L_{Y,t}^2(V_{Y,t},H_X^{-1}(U_X)),$$ subject to the constraint
\begin{equation}
\label{e.constraint.fj}
\nabla_X\cdot (A \j) = f^* + g^* - (X\cdot\nabla_Y-\partial_t)(f + g).
\end{equation}
Furthermore, note that  for every $h \in  L_{Y,t}^2(V_{Y,t},H_{X,0}^1(U_X))$, we have $G^*(h) \in \R \cup \{+\infty\}$.

\begin{lemma}\label{red} Consider $h \in  L_{Y,t}^2(V_{Y,t},H_{X,0}^1(U_X))$. Then
\begin{equation}
\label{e.finite.G*}
G^*(h) < +\infty \quad \implies \quad h \in W\cap L_{Y,t}^2(V_{Y,t},H_{X,0}^1(U_X)).
\end{equation}
\end{lemma}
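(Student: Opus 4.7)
The plan is to establish that $G^*(h) < +\infty$ forces the distribution $(-X\cdot\nabla_Y + \partial_t)h$ to extend to a bounded linear functional on $L_{Y,t}^2(V_{Y,t}, H_{X,0}^1(U_X))$; by duality this places it in $L_{Y,t}^2(V_{Y,t}, H_X^{-1}(U_X))$, and combined with the assumed membership $h \in L_{Y,t}^2(V_{Y,t}, H_{X,0}^1(U_X))$ together with a standard mollification argument this yields $h \in W(U_X\times V_{Y,t})$. The central idea is to probe the representation \eqref{e.second.G*} with a $\lambda$-parametrised admissible triple and extract, via a one-dimensional quadratic optimisation in $\lambda$, a bound isolating the transport pairing between $\phi$ and $h$.

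Fix $\phi \in C_0^\infty(U_X\times V_{Y,t})$, which lies in $W_{0}$, and $\lambda \in \R$. In \eqref{e.second.G*} I would take
\[
f = \lambda\phi, \qquad \j = \nabla_X(\lambda\phi + g), \qquad f^* = \L(\lambda\phi + g) - g^*,
\]
so that $f^*$ is determined by the constraint \eqref{e.constraint.fj} and belongs to $L_{Y,t}^2(V_{Y,t}, H_X^{-1}(U_X))$ (since $\lambda\phi + g \in W$). With this choice the quadratic term in \eqref{e.second.G*} vanishes identically; integrating by parts in $X$ and using the crucial cancellation
\[
\iiint_{U_X\times V_{Y,t}} (X\cdot\nabla_Y - \partial_t)\phi\cdot\phi\, \d X \d Y \d t = \tfrac{1}{2}\iiint_{U_X\times V_{Y,t}} (X\cdot\nabla_Y - \partial_t)\phi^2\, \d X \d Y \d t = 0
\]
(valid because $\phi$ is compactly supported in $U_X\times V_{Y,t}$), the resulting lower bound can be written as
\[
G^*(h) \geq -\lambda^2 E(\phi) + \lambda\,\beta(\phi) + D,
\]
where $E(\phi) := \iiint A\nabla_X\phi\cdot\nabla_X\phi\, \d X\d Y\d t$, $D := \iint_{V_{Y,t}} \langle \L g - g^*, h\rangle \, \d Y \d t$ and
\[
\beta(\phi) := \iiint (X\cdot\nabla_Y - \partial_t)\phi\cdot h \, \d X\d Y\d t - \iiint A\nabla_X\phi\cdot\nabla_X h \, \d X\d Y\d t + \iint_{V_{Y,t}} \langle \L g - g^*, \phi\rangle \, \d Y \d t.
\]
The choice $\lambda = 0$ already gives $G^*(h) \geq D$, so the concave parabola on the right is optimised at $\lambda_* = \beta(\phi)/(2E(\phi))$, producing $\beta(\phi)^2 \leq 4 E(\phi)\bigl(G^*(h)-D\bigr)$.

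Since $G^*(h) < +\infty$, and since ellipticity gives $E(\phi) \leq \kappa\,\|\nabla_X\phi\|^2_{L_{Y,t}^2(V_{Y,t},L^2(U_X))}$, the preceding inequality yields $|\beta(\phi)| \leq c_1\,\|\nabla_X\phi\|_{L_{Y,t}^2(V_{Y,t},L^2(U_X))}$ with $c_1$ independent of $\phi$. The second and third terms in the expansion of $\beta(\phi)$ are bounded in absolute value by $c\,\|\phi\|_{L_{Y,t}^2(V_{Y,t},H_{X,0}^1(U_X))}$ via Cauchy--Schwarz, using the hypotheses $h \in L_{Y,t}^2(V_{Y,t},H_{X,0}^1(U_X))$, $g \in W$ and $g^* \in L_{Y,t}^2(V_{Y,t},H_X^{-1}(U_X))$. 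Isolating the remaining term yields
\[
\Bigl|\iiint_{U_X\times V_{Y,t}} (X\cdot\nabla_Y - \partial_t)\phi\cdot h \, \d X\d Y\d t\Bigr| \leq c_2\,\|\phi\|_{L_{Y,t}^2(V_{Y,t},H_{X,0}^1(U_X))}
\]
for every $\phi \in C_0^\infty(U_X\times V_{Y,t})$. Since the left-hand side is, up to a sign, the distributional action of $(-X\cdot\nabla_Y + \partial_t)h$ on $\phi$, density of $C_0^\infty(U_X\times V_{Y,t})$ in $L_{Y,t}^2(V_{Y,t},H_{X,0}^1(U_X))$ provides a unique bounded extension, identifying $(-X\cdot\nabla_Y + \partial_t)h \in L_{Y,t}^2(V_{Y,t},H_X^{-1}(U_X))$. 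This establishes $\|h\|_W < +\infty$, and a standard mollification argument (applicable because $U_X$ is Lipschitz and $V_{Y,t}$ has $C^{1,1}$ boundary) then places $h$ inside the closure defining $W$.

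The main obstacle is the engineering of the admissible triple in such a way that both the quadratic-in-$\lambda$ coefficient depends only on $\nabla_X\phi$ and the linear-in-$\lambda$ term captures precisely the transport pairing $\iiint (X\cdot\nabla_Y - \partial_t)\phi\cdot h$. Both features hinge on the cancellation $\iint\langle (X\cdot\nabla_Y - \partial_t)\phi, \phi\rangle = 0$ for $\phi \in C_0^\infty(U_X\times V_{Y,t})$, which is the Kolmogorov-boundary-free analogue of the divergence-theorem computation in \eqref{apa+}; without it, the coefficient of $\lambda^2$ would involve a boundary term obstructing the subsequent reduction $|\beta(\phi)| \lesssim \|\nabla_X\phi\|_{L^2}$ and preventing the isolation of the desired $H_X^{-1}$-bound on the transport derivative.
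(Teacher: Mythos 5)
Your proof is correct, and it rests on the same two pillars as the paper's: restricting the supremum defining $G^*$ to $f$ compactly supported (so that the transport self-pairing $\iint\langle(X\cdot\nabla_Y-\partial_t)f,f\rangle$ vanishes), and then extracting from $G^*(h)<\infty$ a uniform bound on the pairing of $(X\cdot\nabla_Y-\partial_t)\phi$ with $h$, which by density identifies $(-X\cdot\nabla_Y+\partial_t)h$ as an element of $L^2_{Y,t}(V_{Y,t},H^{-1}_X(U_X))$. The construction of the admissible test triple differs, however. The paper fixes $\j=\j_0$ as the Lax--Milgram solution of $\nabla_X\cdot(A\j_0)=g^*-(X\cdot\nabla_Y-\partial_t)g$ (recalled from Lemma~\ref{1stlemma}), forcing $f^*=(X\cdot\nabla_Y-\partial_t)f$; the quadratic term $-\tfrac12\iiint A(\nabla_X(f+g)-\j_0)\cdot(\nabla_X(f+g)-\j_0)$ then survives but is bounded uniformly on $\|f\|_{L^2_{Y,t}(V_{Y,t},H^1_{X,0}(U_X))}\le 1$, and replacing $f$ by $-f$ gives a two-sided bound on the linear pairing. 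You instead set $\j=\nabla_X(\lambda\phi+g)$, making the quadratic form in \eqref{e.second.G*} vanish identically, let $f^*$ be forced by the constraint, and observe that the remaining expression is a concave parabola in $\lambda$ with vertex bound $\beta(\phi)^2\le 4E(\phi)(G^*(h)-D)$. Your route is a bit more self-contained (no appeal to the auxiliary solvability of $\j_0$), and the optimisation in $\lambda$ is a clean automated way to extract the bound; the paper's $\pm f$ argument avoids the vertex computation and is perhaps more immediate. Both are valid. One point worth noting: you are actually more explicit than the paper about the final step — passing from finite $W$-norm together with $h\in L^2_{Y,t}(V_{Y,t},H^1_{X,0}(U_X))$ to membership in the closure $W$ — which the paper leaves implicit.
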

\begin{proof} To prove the lemma we need to prove that $(-X\cdot\nabla_Y+\partial_t)h \in L_{Y,t}^2(V_{Y,t},H_{X}^{-1}(U_X))$.  Using that we take a supremum in the definition of $G^\ast$ we can develop lower bounds on $G^\ast$ by restricting the set with respect to which we take the supremum. Here, for $f\in W_0$, we choose to restrict the supremum  to $(f,\j,f^*)$ where  $\j=\j_0$ is a solution of $\nabla_X\cdot(A \j_0)= g^* -(X\cdot\nabla_Y-\partial_t)g$ and $f^* := (X\cdot\nabla_Y-\partial_t)f$. Recall from \eqref{nempty} that such a $\j_0 \in (L_{Y,t}^2(V_{Y,t},L^2_X(U_X)))^m$ exists. With these choices for $\j$ and  $f^*$, the constraint \eqref{e.constraint.fj} is satisfied, and we obtain that
\begin{align*}  
G^*(h)\geq \sup_{f\in W_0} &\biggl\{ \iiint_{U_X\times V_{Y,t}} -\frac 1 2 (A(\nabla_X (f+g)-\j_0))\cdot (\nabla_X (f+g)-\j_0) \, \d X\d Y\d t\notag\\
&+\iint_{V_{Y,t}} {\langle (X\cdot\nabla_Y-\partial_t)f(\cdot,Y,t),(h(\cdot,Y,t) + f(\cdot,Y,t))\rangle}\, \d Y\d t\biggr \}.
\end{align*}
Consider  $f\in  C^\infty_{\K,0}(\overline{U_X\times V_{Y,t}})\subset W_0$. Then, again arguing as in \eqref{apa}, \eqref{apa+},
\begin{align*}  
\iint_{V_{Y,t}} \langle (X\cdot\nabla_Y-\partial_t)f(\cdot,Y,t),f(\cdot,Y,t)\rangle\, \d Y\d t\leq 0.
\end{align*}
{Furthermore, restricting to $f\in C^\infty_{0}({U_X\times V_{Y,t}})\subset C^\infty_{\K,0}(\overline{U_X\times V_{Y,t}})$ yields by the same argument that
\begin{align*}  
\iint_{V_{Y,t}} \langle (X\cdot\nabla_Y-\partial_t)f(\cdot,Y,t),f(\cdot,Y,t)\rangle\, \d Y\d t = 0.
\end{align*}
}
Hence we have the lower bound
\begin{align*}  
G^*(h)\geq \sup &\biggl\{ \iiint_{U_X\times V_{Y,t}} -\frac 1 2 (A(\nabla_X (f+g)-\j_0))\cdot (\nabla_X (f+g)-\j_0) \, \d X\d Y\d t\notag\\
&+\iint_{V_{Y,t}} \langle (X\cdot\nabla_Y-\partial_t)f(\cdot,Y,t),h(\cdot,Y,t)\rangle\, \d Y\d t\biggr \},
\end{align*}
where the supremum now is with respect to $f \in  C^\infty_{0}({U_X\times V_{Y,t}})\subset C^\infty_{\K,0}(\overline{U_X\times V_{Y,t}})$. Moreover, as $G^*(h) < +\infty$, we have that
\begin{align*}  
&\iint_{V_{Y,t}} \langle (X\cdot\nabla_Y-\partial_t)f(\cdot,Y,t),h(\cdot,Y,t)\rangle\, \d Y\d t\notag\\
&\leq  \frac 1 2\iiint_{U_X\times V_{Y,t}} (A(\nabla_X (f+g)-\j_0))\cdot (\nabla_X (f+g)-\j_0) \, \d X\d Y\d t +G^*(h)<\infty,
\end{align*}
for every $f \in  C^\infty_{0}({U_X\times V_{Y,t}})$ fixed. {Note that by replacing $f$ by $-f$ in the above argument we also obtain a lower bound}. In particular,
%
{$$\sup \ \biggl |\iint_{V_{Y,t}} \langle (X\cdot\nabla_Y-\partial_t)h(\cdot,Y,t),f(\cdot,Y,t)\rangle\, \d Y\d t\biggr | <\infty,$$
where the supremum is taken over $f \in  C^\infty_{0}({U_X\times V_{Y,t}})$ such that $||f||_{L_{Y,t}^2(V_{Y,t},H_{X,0}^1(U_X))}\leq 1$.}
 This implies that
$$(-X\cdot\nabla_Y+\partial_t)h\in L_{Y,t}^2(V_{Y,t},H_X^{-1}(U_X))$$ and this observation proves \eqref{e.finite.G*}.
\end{proof}

Lemma \ref{red} gives at hand that  in place of \eqref{e.get.to.nullmin}, we have reduced the matter to proving that
\begin{equation}
\qquad G^*(h) \ge 0\mbox{ for all }h\in W\cap L_{Y,t}^2(V_{Y,t},H_{X,0}^1(U_X)).
\end{equation}
Furthermore, note that for $\tilde h\in W\cap C_{X,0}^\infty(\overline{U_X\times V_{Y,t}})$ we have
\begin{equation}\label{smoothbound}
    G^*(h) \geq G^*(\tilde h) - \|f^*\|_{L^2_{Y,t}(V_{Y,t},H_X^{-1}(U_X))} \| h-\tilde h \|_{L^2_{Y,t}(V_{Y,t},H^1_{X}(U_X))}.
\end{equation}
As we are to establish a lower bound on $G^*$,  we may restrict to taking the supremum over $f^*$ such that
\begin{equation}\label{f*norm}
\|f^*\|_{L_{Y,t}^2(V_{Y,t},H_X^{-1}(U_X))}\leq 1.
\end{equation}
In Lemma \ref{red+} below we prove that
$$G^*(h) \ge 0\mbox{ for all }h\in W\cap C_{X,0}^\infty(\overline{U_X\times V_{Y,t}}).$$
By combining this with \eqref{smoothbound} and \eqref{f*norm} we see that
\[
    G^*(h) \geq G^*(\tilde h) - \| h-\tilde h \|_{L^2_{Y,t}(V_{Y,t},H^1_{X,0}(U_X))}\geq  - \| h-\tilde h \|_{L^2_{Y,t}(V_{Y,t},H^1_{X,0}(U_X))},
\]
for all $\tilde h \in W\cap C^\infty_{X,0}(\overline{U_X\times V_{Y,t}})$. Furthermore, by the definitions of $W$, and $L_{Y,t}^2(V_{Y,t},H_{X,0}^1(U_X))$, we can choose a sequence $h_j\in W\cap C^\infty_{X,0}(\overline{U_X\times V_{Y,t}})$ such that
\[
\lim_{j\rightarrow\infty} \| h-h_j \|_{L^2_{Y,t}(V_{Y,t},H^1_{X,0}(U_X))} = 0.
\]
Hence the proof that $G^*(h)\geq 0$, and hence the final piece in the proof of existence in Theorem \ref{weakdp1} for general $g\in W(U_X\times V_{Y,t})$, is to prove the following lemma.

\begin{lemma}\label{red+}
\begin{equation}
\label{e.get.to.nullmin2}
\qquad G^*(h) \ge 0\mbox{ for all }h\in W\cap C_{X,0}^\infty(\overline{U_X\times V_{Y,t}}).
\end{equation}
\end{lemma}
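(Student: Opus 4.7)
The plan is to compute the supremum \eqref{e.second.G*} defining $G^*(h)$ by identifying its natural formal critical point and then validating the value by a regularization argument. The key observation is that setting $(f,\j,f^*) = (-h,\,\nabla_X(g-h),\,\L(g-h)-g^*)$ satisfies the constraint \eqref{e.constraint.fj} exactly, annihilates the quadratic term because $\nabla_X(f+g)-\j=0$, and annihilates the linear term because $h+f=0$; the objective value at this formal triple is therefore $0$. The only obstruction is that $-h$ need not lie in $W_0$: although $h \in C^\infty_{X,0}(\overline{U_X\times V_{Y,t}})$ vanishes on $\partial U_X$, nothing is assumed about $h$ on the outflow portion of $\partial V_{Y,t}$.

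To handle this, I would introduce a family of smooth cutoffs $\eta_\epsilon=\eta_\epsilon(X,Y,t)$, $0\le \eta_\epsilon\le 1$, identically $1$ on an exhausting compact subset that also covers a neighborhood of the inflow portion of $\partial V_{Y,t}$, and identically $0$ on an $\epsilon$-neighborhood of the outflow portion. Set $f_\epsilon := -\eta_\epsilon h$, which lies in $C^\infty_{\K,0}(\overline{U_X\times V_{Y,t}}) \subset W_0$ by construction (since $h \equiv 0$ on $\partial U_X$ and $\eta_\epsilon \equiv 0$ near outflow). Choose $\j_\epsilon := \nabla_X(g+f_\epsilon)$ so that the quadratic part of the objective vanishes, and let $f^*_\epsilon = \L(g-\eta_\epsilon h) - g^*$ be the one forced by \eqref{e.constraint.fj}. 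The triple $(f_\epsilon,\j_\epsilon,f^*_\epsilon)$ is admissible, so
\[
G^*(h) \;\ge\; \iint_{V_{Y,t}} \langle f^*_\epsilon,\, h(1-\eta_\epsilon) \rangle\,\d Y\d t.
\]

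The remaining step is to analyze the limit of this lower bound as $\epsilon\to 0$. Since $h(1-\eta_\epsilon)\to 0$ in $L^2_{Y,t}(V_{Y,t},H^1_{X,0}(U_X))$ while $g$, $\nabla_X g$, $g^*$ and $(X\cdot\nabla_Y-\partial_t)g$ are fixed, most of the contributions obtained by expanding $f^*_\epsilon$ via the definition of $\L$ vanish by dominated convergence. The decisive surviving contribution has the form $-\iiint h^2(X\cdot\nabla_Y-\partial_t)\eta_\epsilon(1-\eta_\epsilon)\,\d X\d Y\d t$; rewriting this as $\tfrac12\iiint h^2(X\cdot\nabla_Y-\partial_t)(1-\eta_\epsilon)^2\,\d X\d Y\d t$ and integrating by parts in $(Y,t)$ exactly as in \eqref{apa+} and \eqref{ex3} produces the boundary integral $\tfrac12\int_{U_X}\iint_{\partial V_{Y,t}} h^2(1-\eta_\epsilon)^2 (X,-1)\cdot N_{Y,t}\,\d\sigma_{Y,t}\d X$. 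By the design of $\eta_\epsilon$, $(1-\eta_\epsilon)^2$ converges to $1$ on outflow and to $0$ on inflow, so the lower bound tends to
\[
\tfrac{1}{2}\int_{U_X}\iint_{\{(X,-1)\cdot N_{Y,t}>0\}} h^2\,(X,-1)\cdot N_{Y,t}\,\d\sigma_{Y,t}\d X \;\ge\; 0,
\]
which yields $G^*(h) \ge 0$.

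The main technical obstacle lies in the construction of the cutoff $\eta_\epsilon$: because the outflow/inflow decomposition of $\partial V_{Y,t}$ depends on $X$ through the sign of $(X,-1)\cdot N_{Y,t}$, the cutoff cannot depend on $(Y,t)$ alone, and the resulting $\nabla_X \eta_\epsilon$ must be controlled so that $\nabla_X(\eta_\epsilon h)$ stays bounded in $L^2$ uniformly in $\epsilon$ and so that the parasitic term $\iiint A\nabla_X(g-\eta_\epsilon h)\cdot h\nabla_X\eta_\epsilon\,\d X\d Y\d t$ vanishes in the limit. Exploiting the $C^{1,1}$-regularity of $\partial V_{Y,t}$ together with the smoothness of $h$ and $h|_{\partial U_X}=0$, one constructs $\eta_\epsilon$ as a smooth function of the signed distance from $(X,Y,t)$ to the outflow subset; the careful estimates in the transition region where $(X,-1)\cdot N_{Y,t}$ changes sign constitute the heart of the argument.
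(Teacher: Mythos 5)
Your high‑level plan is the same as the paper's: the formal minimizer $(f,\j,f^*)=(-h,\nabla_X(g-h),\L(g-h)-g^*)$ gives objective value $0$, the obstruction is that $-h\notin W_0$ since $h\in C^\infty_{X,0}$ controls $h$ only on $\partial U_X$ and not on the outflow part of $\partial V_{Y,t}$, and one should truncate $-h$ near outflow so that the leftover is a nonnegative boundary integral. Up to that point the proposal matches the paper exactly.

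The gap is in the single cutoff $\eta_\epsilon(X,Y,t)$. Because the outflow/inflow split of $\partial V_{Y,t}$ depends on $X$ through $(X,-1)\cdot N_{Y,t}$, any cutoff that vanishes in an $\epsilon$-collar of the outflow set has $|\nabla_X\eta_\epsilon|\sim\epsilon^{-1}$ on a region of volume $\sim\epsilon$. Plugging $f_\epsilon=-\eta_\epsilon h$ and $\j_\epsilon=\nabla_X(f_\epsilon+g)$ into the variational formula, the term $\langle \nabla_X\cdot(A\nabla_X(f_\epsilon+g)), h(1-\eta_\epsilon)\rangle$ produces (after integrating by parts in $X$) the contribution
\[
-\iiint_{U_X\times V_{Y,t}} A\,(h\nabla_X\eta_\epsilon)\cdot(h\nabla_X\eta_\epsilon)\,\d X\d Y\d t \;\lesssim\; -\,\|h\nabla_X\eta_\epsilon\|_{L^2}^2,
\]
and $\|h\nabla_X\eta_\epsilon\|_{L^2}^2\sim\epsilon^{-1}\|h\|_\infty^2$ unless $h$ happens to vanish on the transition set, which is not assured by $h\in C^\infty_{X,0}$. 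This also invalidates the claim that $h(1-\eta_\epsilon)\to 0$ in $L^2_{Y,t}(V_{Y,t},H^1_{X,0}(U_X))$: convergence of the gradient part requires exactly the control of $h\nabla_X\eta_\epsilon$ that is failing. So the lower bound you derive can tend to $-\infty$, and the "careful estimates in the transition region" that you defer are not a finishing detail — they are where the approach breaks.

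The paper sidesteps this by separating the two roles that your one cutoff is asked to play. It regularizes with a $(Y,t)$-only cutoff $\psi_\delta$ (so $\nabla_X\psi_\delta\equiv 0$ and no parasitic volume term appears), writes $f_\delta=\tilde f\psi_\delta+(1-\psi_\delta)b$ with a free boundary-behavior variable $b\in C^\infty_{\K,0}$, and then approximates $f_\delta$ by $\tilde f$ in the volume integrals so that the bulk part of $\tilde G^*$ depends only on $\tilde f$, while the boundary integral depends only on $b$. The $X$-dependent cutoff (the $\psi_r$ of Lemma~\ref{lemma1c}, built from $((X,-1)\cdot N_{Y,t})^-/(1+|X|^2)$, which is your $\eta_\epsilon$ in disguise) then enters only through the boundary integral $\int_{U_X}\iint_{\partial V_{Y,t}}(b+h)^2(X,-1)\cdot N_{Y,t}$, where $\nabla_X b$ is irrelevant, so the gradient blow‑up is harmless. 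Choosing $\tilde f=-h$ and $\j=\nabla_X(h+g)$ kills the quadratic term cleanly (no $\nabla_X$ of a cutoff appears), and Lemma~\ref{lemma1c} handles the boundary piece. If you want to repair your proposal you would need to reproduce this separation; a single $(X,Y,t)$-cutoff fed into the volume integrals will not close.
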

\begin{proof} To start the proof of the lemma we first note that we have, as $f\in W_0$,  that $$(X\cdot\nabla_Y-\partial_t)f \in L_{Y,t}^2(V_{Y,t},H_X^{-1}(U_X)),$$ and hence we can replace $f^*$ by {$f^* + (X\cdot\nabla_Y-\partial_t)f$} in the variational formula \eqref{e.second.G*} for $G^*$ to get
\begin{align*}
G^*(h) = \sup_{(f,\j,f^*)} &\biggl\{ \iiint_{U_X\times V_{Y,t}} -\frac 1 2 (A(\nabla_X (f+g)-\j))\cdot (\nabla_X (f+g)-\j) \, \d X\d Y\d t\notag\\
&+\iint_{V_{Y,t}} {\langle (f^* + (X\cdot\nabla_Y-\partial_t)f)(\cdot,Y,t),(h(\cdot,Y,t) + f(\cdot,Y,t))\rangle}\, \d Y\d t\biggr \},
\end{align*}
where the supremum now is taken with respect to
\begin{align}
\label{e.rest}(f,\j,f^*)\in (W\cap C^\infty_{\K,0}(\overline{U_X\times V_{Y,t}}))\times (L_{Y,t}^2(V_{Y,t},L^2_X(U_X)))^m\times L_{Y,t}^2(V_{Y,t},H_X^{-1}(U_X)),
\end{align}
subject to the constraint
\begin{equation}
\label{e.constraint.fj2}
\nabla_X\cdot(A \j )= f^* + g^*  - (X\cdot\nabla_Y-\partial_t)g,
\end{equation}
as well as the constraint \eqref{f*norm}. Next  using that $f\in C^\infty_{\K,0}(\overline{U_X\times V_{Y,t}})$, $h\in C^\infty_{X,0}(\overline{U_X\times V_{Y,t}})$, we have
\begin{align*}  
&\iint_{V_{Y,t}} {\langle (X\cdot\nabla_Y-\partial_t)f(\cdot,Y,t),(h(\cdot,Y,t)+f(\cdot,Y,t))\rangle}\, \d Y\d t\\
&=\iint_{V_{Y,t}}{(X\cdot\nabla_Y-\partial_t)f(\cdot,Y,t)(h(\cdot,Y,t)+f(\cdot,Y,t))}\, \d Y\d t\\
&={-\iint_{V_{Y,t}} \langle (X\cdot\nabla_Y-\partial_t)h(\cdot,Y,t),f(\cdot,Y,t)\rangle\, \d Y\d t}\\
&\quad {+\int_{U_X}\iint_{\partial V_{Y,t}} \bigl (\frac 1 2 f^2+fh)(X,-1)\cdot N_{Y,t}\, \d\sigma_{Y,t}\d X.}
\end{align*}
Note that the last integral is not necessarily zero as $f\in W_{0}$ only implies that $f=0$ on $\partial_{\K}(U_X\times V_{Y,t})$ and not necessarily on
$\{(X,Y,t)\in \overline{U_X}\times \partial V_{Y,t}\mid (X,-1)\cdot N_{Y,t}\leq 0\}$.  In any case, using the identity in the last display we see that
\begin{align}
\label{e.G*2a}
G^*(h) = \sup_{(f,\j,f^*)} &\biggl\{ \iiint_{U_X\times V_{Y,t}} \frac 1 2 (A(\nabla_X (f+g)-\j))\cdot (\nabla_X (f+g)-\j) \, \d X\d Y\d t\notag\\
&+\iint_{V_{Y,t}} {\langle f^*,(h(\cdot,Y,t)+f(\cdot,Y,t))\rangle -\langle (X\cdot\nabla_Y-\partial_t)h(\cdot,Y,t),f(\cdot,Y,t)\rangle}\, \d Y\d t\\
&{+\int_{U_X}\iint_{\partial V_{Y,t}} \bigl (\frac 1 2 f^2 + fh)(X,-1)\cdot N_{Y,t}\, \d\sigma_{Y,t}\d X}\biggr \},\notag
\end{align}
where the supremum still is with respect to $(f,\j,f^*)$ as in \eqref{e.rest}  subject to \eqref{f*norm} and \eqref{e.constraint.fj2}.

Consider  an arbitrary pair
$(b,\tilde f)\in C^\infty_{\K,0}(\overline{U_X\times V_{Y,t}})\times (W\cap C^\infty_{X,0}(\overline{U_X\times V_{Y,t}}))$. For each $\delta > 0$, denote
\begin{equation*}  
V_{Y,t}^\delta := \Ll\{ (Y,t) \in V_{Y,t}  \mid \dist(Y,t,\partial V_{Y,t}) \ge \delta \Rr\}.
\end{equation*}
For every $\delta > 0$ sufficiently small so that $V_{Y,t}^{2\delta} \neq \emptyset$, let $\psi_\delta \in C^\infty_0(V_{Y,t})$ denote a smooth function such that
\begin{align*}  
& \psi_\delta  \equiv 1 \quad \text{ on } V_{Y,t}^{2\delta}, \\
& \psi_\delta  \equiv 0 \quad \text{ on } (\mathbb R^m\times \mathbb R) \setminus V_{Y,t}^\delta. 
\end{align*}
We think of $\psi_\delta$ as a function on $U_X\times V_{Y,t} $ that does not depend on $X$. Given $(b,\tilde f)$ as above we introduce
\begin{equation}
\label{e.test.fr}
f_\delta=f_\delta(\tilde f,b):= \tilde f \psi_\delta+ (1-\psi_\delta)b.
\end{equation}
Then $ f_\delta \in C_{\K,0}^\infty(U_X\times V_{Y,t})$ and $(f_\delta-b)=0$ on $(U_X\times \partial V_{Y,t})$. As
\begin{equation*}  
\nabla_X  f_\delta = \psi_\delta \nabla_X \tilde f+(1-\psi_\delta)\nabla_X b,
\end{equation*}
we deduce that
\begin{equation*}  
||\tilde f- f_\delta||_{L_{Y,t}^2(V_{Y,t},H_X^1(U_X))}=||(\tilde f+b)(1-\psi_\delta)||_{L_{Y,t}^2(V_{Y,t},H_X^1(U_X))}\to 0,\: \mbox{as $\delta\to 0$}.
\end{equation*}

Consider $(\tilde f,\j,f^*,b)$ in the set
\begin{align*}
(W\cap C^\infty_{X,0}(\overline{U_X\times V_{Y,t}}))\times (L_{Y,t}^2(V_{Y,t},L^2_X(U_X)))^m\times L_{Y,t}^2(V_{Y,t},H_X^{-1}(U_X))\times C^\infty_{\K,0}(\overline{U_X\times V_{Y,t}}).
\end{align*}
Given $(b,\tilde f)$, $\delta > 0$, let $f=f_\delta$ be as above. Then with $f=f_\delta$ we have
\begin{align*}
& \iiint_{U_X\times V_{Y,t}}- \frac 1 2 (A\nabla_X (\tilde f+g)-\j)\cdot (\nabla_X (\tilde f+g)-\j) \, \d X\d Y\d t\notag\\
&\quad +\iint_{V_{Y,t}} {\langle f^*,(h(\cdot,Y,t)+\tilde f(\cdot,Y,t))\rangle-\langle (X\cdot\nabla_Y-\partial_t)h(\cdot,Y,t),\tilde f(\cdot,Y,t)\rangle}\, \d Y\d t
\notag\\
&\quad {+\int_{U_X}\iint_{\partial V_{Y,t}} \bigl (\frac 1 2 b^2+bh)(X,-1)\cdot N_{Y,t}\, \d\sigma_{Y,t}\d X}\notag\\
&= \iiint_{U_X\times V_{Y,t}}- \frac 1 2 (A\nabla_X (f+g)-\j)\cdot (\nabla_X (f+g)-\j) \, \d X\d Y\d t\notag\\
&\quad +\iint_{V_{Y,t}} {\langle f^*,(h(\cdot,Y,t)+f(\cdot,Y,t))\rangle-\langle (X\cdot\nabla_Y-\partial_t)h(\cdot,Y,t),f(\cdot,Y,t)\rangle}\, \d Y\d t
\notag\\
&\quad {+\int_{U_X}\iint_{\partial V_{Y,t}} \bigl (\frac 1 2 f^2+fh)(X,-1)\cdot N_{Y,t}\, \d\sigma_{Y,t}\d X} +R(\delta,\tilde f,b,g,f^*,h,f),
\end{align*}
where $R(\delta,\tilde f,b,g,f^*,h,f)$ denotes an error term which we can estimate. Indeed, let $M$ denote the largest of the four norms
\begin{equation*}
    \begin{split}
        &||g||_{L_{Y,t}^2(V_{Y,t},H_X^{1}(U_X))},\\  &||f^*||_{L_{Y,t}^2(V_{Y,t},H_X^{-1}(U_X))},\\
        &||(X\cdot\nabla_Y-\partial_t)h||_{L_{Y,t}^2(V_{Y,t},H_X^{-1}(U_X))},\\ &||f^* + g^*  - (X\cdot\nabla_Y-\partial_t)g||_{L_{Y,t}^2(V_{Y,t},H_X^{-1}(U_X))}.
    \end{split}
\end{equation*}
We also let
$$\Gamma(\tilde f,b):=||\tilde f||_{L_{Y,t}^2(V_{Y,t},H_X^1(U_X))}+||b||_{L_{Y,t}^2(V_{Y,t},H_X^1(U_X))}.$$
Then by the explicit construction of $f=f_\delta$,
\begin{align}
|R(\delta,\tilde f,b,g,f^*,h,f)|\leq \bigl(M+\Gamma(\tilde f,b)\bigr)||(\tilde f+b)(1-\psi_\delta)||_{L_{Y,t}^2(V_{Y,t},H_X^1(U_X))}.
\end{align}
 Based on this we can conclude that
\begin{align*}
& \iiint_{U_X\times V_{Y,t}}- \frac 1 2 (A\nabla_X (\tilde f+g)-\j)\cdot (\nabla_X (\tilde f+g)-\j) \, \d X\d Y\d t\notag\\
&+\iint_{V_{Y,t}} {\langle f^*,(h(\cdot,Y,t)+\tilde f(\cdot,Y,t))\rangle-\langle (X\cdot\nabla_Y-\partial_t)h(\cdot,Y,t),\tilde f(\cdot,Y,t)\rangle}\, \d Y\d t
\notag\\
&{+\int_{U_X}\iint_{\partial V_{Y,t}} \bigl (\frac 1 2 b^2+bh)(X,-1)\cdot N_{Y,t}\, \d\sigma_{Y,t}\d X} \notag\\
&\leq G^*(h)+\bigl(M+\Gamma(\tilde f,b)\bigr)||(\tilde f+b)(1-\psi_\delta)||_{L_{Y,t}^2(V_{Y,t},H_X^1(U_X))}.
\end{align*}
Let \begin{align*}
\tilde G^*(h) := \sup_{(\tilde f,\j,f^*,b)} &\biggl\{ \iiint_{U_X\times V_{Y,t}}- \frac 1 2 (A\nabla_X (\tilde f+g)-\j)\cdot (\nabla_X (\tilde f+g)-\j) \, \d X\d Y\d t\notag\\
&+\iint_{V_{Y,t}} {\langle f^*,(h(\cdot,Y,t)+\tilde f(\cdot,Y,t))\rangle-\langle (X\cdot\nabla_Y-\partial_t)h(\cdot,Y,t),\tilde f(\cdot,Y,t)\rangle}\, \d Y\d t
\notag\\
&{+\int_{U_X}\iint_{\partial V_{Y,t}} \bigl (\frac 1 2 b^2+bh)(X,-1)\cdot N_{Y,t}\, \d\sigma_{Y,t}\d X} \biggr \},
\end{align*}
where the supremum is taken with respect to all $(\tilde f,\j,f^*,b)$ in the set
\begin{align*}
(W\cap C^\infty_{X,0}(\overline{U_X\times V_{Y,t}}))\times (L_{Y,t}^2(V_{Y,t},L^2_X(U_X)))^m\times L_{Y,t}^2(V_{Y,t},H_X^{-1}(U_X))\times C^\infty_{\K,0}(\overline{U_X\times V_{Y,t}}),
\end{align*}
which satisfies $$\Gamma(\tilde f,b)\leq \Gamma, $$
for some large but fixed $\Gamma\geq 1$. Then,
$$\tilde G^*(h)\leq G^*(h)+\bigl(M+\Gamma\bigr)\sup_{\{(\tilde f,b):\ \Gamma(\tilde f,b)\leq \Gamma\}}||(\tilde f+b)(1-\psi_\delta)||_{L_{Y,t}^2(V_{Y,t},H_X^1(U_X))}.$$
However, by the above considerations, and dominated convergence, we see that
$$\bigl(M+\Gamma\bigr)\sup_{\{(\tilde f,b):\ \Gamma(\tilde f,b)\leq \Gamma\}}||(\tilde f+b)(1-\psi_\delta)||_{L_{Y,t}^2(V_{Y,t},H_X^1(U_X))}\to 0,$$
as $\delta\to 0$. In particular, to prove the lemma it suffices to prove that $\tilde G^*(h)\geq 0$.

To start the proof of that $\tilde G^*(h)\geq 0$, recall that $h\in W\cap C^\infty_{X,0}(\overline{U_X\times V_{Y,t}})$. Furthermore, in the definition of
$\tilde G^*(h)$ we have the supremum with respect to $\tilde f\in W\cap C^\infty_{X,0}(\overline{U_X\times V_{Y,t}})$. Hence, we can produce a lower bound on
$\tilde G^*(h)$ by choosing {$\tilde f=-h$} when considering the supremum as long as, say, $\Gamma\geq 2||h||_{L_{Y,t}^2(V_{Y,t},H_X^1(U_X))}$. Hence, selecting {$\tilde f = -h$} we obtain
\begin{align*}
\tilde G^*(h) \geq \sup_{(\j,f^*,b)} &\biggl\{ \iiint_{U_X\times V_{Y,t}} -\frac 1 2 (A(\nabla_X (h+g)-\j))\cdot (\nabla_X (h+g)-\j) \, \d X\d Y\d t\notag\\
&{+\iint_{V_{Y,t}}\langle (X\cdot\nabla_Y-\partial_t)h(\cdot,Y,t),h(\cdot,Y,t)\rangle\, \d Y\d t}\notag\\
&+\int_{U_X}\iint_{\partial V_{Y,t}} { (\frac 1 2 b^2+bh)}(X,-1)\cdot N_{Y,t}\, \d\sigma_{Y,t}\d X\biggr \},
\end{align*}
where the supremum is over every $(\j,f^*,b)$ as above.  However, as $h\in W\cap C^\infty_{X,0}(\overline{U_X\times V_{Y,t}})$ we have
\begin{align*}
&{\iint_{V_{Y,t}}\langle (X\cdot\nabla_Y-\partial_t)h(\cdot,Y,t),h(\cdot,Y,t)\rangle\, \d Y\d t}\notag\\
&+\int_{U_X}\iint_{\partial V_{Y,t}} { (\frac 1 2 b^2+bh)}(X,-1)\cdot N_{Y,t}\, \d\sigma_{Y,t}\d X\notag\\
&=\int_{U_X}\iint_{\partial V_{Y,t}} {\frac 12(b+h)^2}(X,-1)\cdot N_{Y,t}\, \d\sigma_{Y,t}\d X,
\end{align*}
and therefore
\begin{align*}
\tilde G^*(h) \geq \sup_{(\j,f^*,b)} &\biggl\{ \iiint_{U_X\times V_{Y,t}} -\frac 1 2 (A(\nabla_X (h+g)-\j))\cdot (\nabla_X (h+g)-\j) \, \d X\d Y\d t\notag\\
&+\int_{U_X}\iint_{\partial V_{Y,t}} \frac 12{(b+h)^2}(X,-1)\cdot N_{Y,t}\, \d\sigma_{Y,t}\d X\biggr \},
\end{align*}
where still the supremum is over  $(\j,f^*,b)$ as above. Using Lemma
\ref{lemma1c} below we have that
\begin{align}
\label{e.G*2abch}
\sup_{b} \int_{U_X}\iint_{\partial V_{Y,t}} \frac 12{(b+h)^2}(X,-1)\cdot N_{Y,t}\, \d\sigma_{Y,t}\d X\geq 0,
\end{align}
where the supremum is taken with respect to all
$$b\in W\cap C^\infty_{\K,0}(\overline{U_X\times V_{Y,t}}),\ ||b||_{L_{Y,t}^2(V_{Y,t},H_X^1(U_X))}\leq\Gamma.$$
Using this we can conclude that
\begin{equation*}  
\tilde G^*(h) \ge \sup \biggl\{ \iiint_{U_X\times V_{Y,t}} -\frac 1 2 (A(\nabla_X (h+g)-\j))\cdot (\nabla_X (h+g)-\j) \, \d X\d Y\d t\biggr \},
\end{equation*}
with the supremum ranging over all $\j \in (L_{Y,t}^2(V_{Y,t},L^2_X(U_X)))^m$ and  $f^* \in L_{Y,t}^2(V_{Y,t},H_X^{-1}(U_X))$  satisfying the constraint \eqref{e.constraint.fj2}. We now simply select $\j = \nabla_X (h+g) \in (L_{Y,t}^2(V_{Y,t},L^2_X(U_X)))^m$ and then $$f^* = \nabla_X\cdot(A \j) -g^* + (X\cdot\nabla_Y-\partial_t)g \in  L_{Y,t}^2(V_{Y,t},H_X^{-1}(U_X))$$ to conclude that $\tilde G^*(h) \ge 0$.
\end{proof}

\begin{lemma}\label{lemma1c} Assume that $h\in W(U_X\times V_{Y,t})\cap C^\infty_{X,0}(\overline{U_X\times V_{Y,t}})$. Then
\begin{align}
\sup_{b\in W\cap C^\infty_{\K,0} (U_X\times V_{Y,t})} \iiint_{U_X\times\partial V_{Y,t}} {(b+h)^2}(X,-1)\cdot N_{Y,t}\, \d X \d\sigma_{Y,t}\geq 0.
\end{align}
\end{lemma}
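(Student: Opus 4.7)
The plan is to exploit the sign structure of $(X,-1)\cdot N_{Y,t}$ together with the boundary vanishing forced by membership in $C^\infty_{\K,0}$. Write $\nu(X,Y,t):=(X,-1)\cdot N_{Y,t}$ and split $\Sigma:=\overline{U_X}\times\partial V_{Y,t}$ as $\Sigma^+\cup\Sigma^0\cup\Sigma^-$ according to $\nu>0$, $\nu=0$, and $\nu<0$. By the definition of the Kolmogorov boundary, every admissible $b$ vanishes on $\Sigma^+$ and on $\partial U_X\times\overline{V_{Y,t}}$, and since $h\in C^\infty_{X,0}$ also vanishes on the latter set, we get the splitting
\[
\iiint_\Sigma (b+h)^2\nu\,\d X\,\d\sigma_{Y,t} = \iiint_{\Sigma^+} h^2\nu\,\d X\,\d\sigma_{Y,t} + \iiint_{\Sigma^-}(b+h)^2\nu\,\d X\,\d\sigma_{Y,t}
\]
for every admissible $b$. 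The first term is manifestly non-negative and independent of $b$, so it suffices to produce a sequence $b_\epsilon$ driving the second term to $0$.

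To construct $b_\epsilon$, fix a smooth scalar cutoff $\chi\in C^\infty(\mathbb R)$ with $\chi\equiv 1$ on $(-\infty,-1]$ and $\chi\equiv 0$ on $[-1/2,\infty)$. Since $\partial V_{Y,t}$ is only $C^{1,1}$, $\nu$ is merely Lipschitz on $\Sigma$, so I would first mollify the outward unit normal in a collar neighborhood of $\partial V_{Y,t}$ to obtain a $C^\infty$ vector field $\tilde N$ agreeing with $N_{Y,t}$ up to an error of size $o(\epsilon)$ in sup-norm. Put $\tilde\nu(X,Y,t):=(X,-1)\cdot \tilde N(Y,t)$, $\eta_\epsilon:=\chi(\tilde\nu/\epsilon)$, and
\[
b_\epsilon(X,Y,t):=-h(X,Y,t)\,\eta_\epsilon(X,Y,t)\,\tau(Y,t),
\]
where $\tau$ is a smooth cutoff supported in a thin collar of $\partial V_{Y,t}$ with $\tau\equiv 1$ on $\partial V_{Y,t}$. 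By choosing the mollification error sufficiently small, $\eta_\epsilon\equiv 0$ on a neighborhood of $\{\nu\ge 0\}\cap\Sigma$ and $\eta_\epsilon\equiv 1$ on $\{\nu\le -2\epsilon\}\cap\Sigma$. Thus $b_\epsilon\in C^\infty(\overline{U_X\times V_{Y,t}})$, it vanishes on $\Sigma^+$ and (because $h$ does) on $\partial U_X\times\overline{V_{Y,t}}$, so $b_\epsilon\in W\cap C^\infty_{\K,0}(\overline{U_X\times V_{Y,t}})$.

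With these choices, the trace of $b_\epsilon$ on $\Sigma$ equals $-h\eta_\epsilon$, hence $b_\epsilon+h=h(1-\eta_\epsilon)$ on $\Sigma$. The integrand $h^2(1-\eta_\epsilon)^2\nu$ vanishes wherever $\eta_\epsilon=1$, and the remaining support, intersected with $\Sigma^-$, lies in $\{-2\epsilon<\nu<0\}\cap\Sigma$, where $|\nu|<2\epsilon$ and $h$ is controlled by its $L^\infty(\overline{U_X\times V_{Y,t}})$ norm. Therefore
\[
\left|\iiint_{\Sigma^-}(b_\epsilon+h)^2\nu\,\d X\,\d\sigma_{Y,t}\right|\le 2\epsilon\,||h||_{L^\infty}^2\,|\Sigma|\longrightarrow 0\quad\text{as }\epsilon\to 0,
\]
using the finiteness of $|\Sigma|$. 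Combining with the decomposition above yields
\[
\sup_b\iiint_\Sigma(b+h)^2\nu\,\d X\,\d\sigma_{Y,t}\ge\iiint_{\Sigma^+}h^2\nu\,\d X\,\d\sigma_{Y,t}\ge 0,
\]
which is the claim.

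The only real technical obstacle is the construction of a genuinely $C^\infty$ cutoff $\eta_\epsilon$ separating $\{\nu\ge 0\}$ from $\{\nu\le -2\epsilon\}$ when $\nu$ itself is only Lipschitz on $\Sigma$; the standard fix, used above, is to mollify the normal first and accept an $O(\epsilon)$ distortion of level sets, which is absorbed harmlessly into the $O(\epsilon)$ bound. The conceptual content of the proof is that the Kolmogorov boundary condition forces $b=0$ precisely on the portion $\Sigma^+$ where the integrand already has favorable sign, while leaving us free to choose $b\approx -h$ on the unfavorable portion $\Sigma^-$.
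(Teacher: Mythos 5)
Your proof is correct and takes essentially the same route as the paper. The paper sets $b_r=(\psi_r-1)h$ with $\psi_r(X,Y,t)=\psi\bigl(r\big((X,-1)\cdot N_{Y,t}\big)^-/(1+|X|^2)\bigr)$ and sends $r\to\infty$; this is your $b_\epsilon=-h\,\eta_\epsilon\tau$ up to the reparametrization $r\sim 1/\epsilon$, and in both cases one forces $b\approx -h$ on $\{(X,-1)\cdot N_{Y,t}<0\}$, keeps $b$ zero on the Kolmogorov boundary, and uses $h\in L^\infty$ together with finiteness of the surface measure to kill the $O(\epsilon)$-wide transition layer. Your explicit mollification of the outward normal is a useful refinement: for a $C^{1,1}$ boundary, $N_{Y,t}$ is only Lipschitz, so the paper's $\psi_r$ is only Lipschitz and its $b_r$ does not literally lie in $C^\infty_{\K,0}(\overline{U_X\times V_{Y,t}})$, whereas your $b_\epsilon$ does. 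The one step you assert without verification is $b_\epsilon\in W(U_X\times V_{Y,t})$; as in the paper's argument for $h\psi_r$, this follows by applying the Leibniz rule to $(X\cdot\nabla_Y-\partial_t)(h\eta_\epsilon\tau)$ and using that $\eta_\epsilon\tau$ and its first derivatives are bounded on $\overline{U_X\times V_{Y,t}}$ for fixed $\epsilon$, so it is routine but should be recorded.
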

\begin{proof}
Let $\psi(s)\in C^\infty(\R)$ be such that $0\leq \psi \leq 1$,
\begin{equation*}
        \psi \equiv 1\ \mbox{on }[ 0,1],\ \psi \equiv 0\ \mbox{on }[ 2,\infty),
\end{equation*}
$|\psi'|\leq 2$ and such that $\sqrt{1-\psi^2}\in C^\infty(\R)$. Based on $\psi$ we introduce for $r$, $0\leq r <\infty$
\[
\psi_r(X,Y,t) := \psi\biggl( r\, \frac{\big((X,-1)\cdot N_{Y,t}\big)^-}{1+|X|^2} \biggr),
\]
where we use the notation $(s)^-:=\max\lbrace 0,-s \rbrace$ for $s\in\R$. As $h$ is smooth, and $U_X$ and $V_{Y,t}$ are bounded domains, we have
\begin{equation}\label{bdryfinite}
    \iiint_{U_X\times \partial V_{Y,t}} h^2|(X,-1)\cdot N_{Y,t}|\d X\d \sigma_{Y,t} < \infty.
\end{equation}
Let, for any $r\geq 0$,
\begin{equation}\label{br}
    {b_r := (\psi_r-1)h.}
\end{equation}
We claim that
\begin{equation}\label{bdryfinitea}
b_r\in W(U_X\times V_{Y,t}).
\end{equation}
To prove this it is enough to prove that
\begin{equation}\label{brinW}
    \|h\psi_r\|_{W(U_X\times V_{Y,t})} \leq c(1+r)\|h\|_{W(U_X\times V_{Y,t})},
\end{equation}
where $c=c(m,\psi)<\infty$. To see this we first note that
\begin{equation}
    |h\psi_r| \leq 2|h|,\   |\nabla_X h\psi_r| \leq |\nabla_X h| + cr|h|.
\end{equation}
Hence,
\begin{equation}
    \|h\psi_r\|_{L^2_{Y,t}(V_{Y,t},H^1_X(U_X))} \leq c(1+r) \|h\|_{L^2_{Y,t}(V_{Y,t},H^1_X(U_X))}.
\end{equation}
Using
\begin{equation}
    (X,-1)\cdot\nabla_{Y,t}(h\psi_r) = h(X,-1)\cdot\nabla_{Y,t}\psi_r + \psi_r (X,-1)\cdot\nabla_{Y,t}h,
\end{equation}
together with the estimates
\begin{equation}
    \|h(X,-1)\cdot\nabla_{Y,t}\psi_r\|_{L^2_{Y,t}(V_{Y,t},L^2_X(U_X))} \leq c\|h\|_{L^2_{Y,t}(V_{Y,t},H^1_X(U_X))},
\end{equation}
and
\begin{align*}
        &\|\psi_r (X,-1)\cdot\nabla_{Y,t}h\|_{L^2_{Y,t}(V_{Y,t},H^{-1}_X(U_X))}\\
         &\leq c \|\psi_r\|_{L^2_{Y,t}(V_{Y,t},H^1_X(U_X))} \|(X,-1)\cdot\nabla_{Y,t}h\|_{L^2_{Y,t}(V_{Y,t},H^{-1}_X(U_X))},
\end{align*}
we deduce \eqref{bdryfinitea}.

By construction, $b_r$ vanishes on $\partial_\K(U_X\times V_{Y,t})$. Together with \eqref{bdryfinitea}, this yields that $b_r\in W\cap C^\infty_{\K,0}(\overline{U_X\times V_{Y,t}})$. Furthermore,
\begin{equation*}
    \begin{split}
        \iiint_{U_X\times\partial V_{Y,t}} {(b_r+h)^2}(X,-1)\cdot N_{Y,t}\, \d X \d\sigma_{Y,t} &= \iiint_{U_X\times\partial V_{Y,t}} \psi_r^2h^2(X,-1)\cdot N_{Y,t}\, \d X \d\sigma_{Y,t}.
    \end{split}
\end{equation*}
Noting that the right hand side above has non-negative limit,
\begin{align*}
&\lim_{r\rightarrow\infty} \iiint_{U_X\times\partial V_{Y,t}} \psi_r^2h^2(X,-1)\cdot N_{Y,t}\, \d X \d\sigma_{Y,t}\\
&=\iiint_{U_X\times\partial V_{Y,t}} h^2\big((X,-1)\cdot N_{Y,t}\big)^-\, \d X \d\sigma_{Y,t}\geq 0, \end{align*}
we can complete the proof of the lemma.
\end{proof}

\subsection{Proof of Theorem \ref{weakdp1}: the quantitative estimate} Having proved that there exists a weak solution $u\in W(U_X\times V_{Y,t})$ to the problem in \eqref{dpweak} in the sense of Definition \ref{defwe}, we here derive the quantitative estimate stated in Theorem \ref{weakdp1}. Indeed, suppose that  $u\in W(U_X\times V_{Y,t})$ is a weak solution to the problem in \eqref{dpweak} in the sense discussed. Then first, simply using the weak formulation with any test function $\phi\in L_{Y,t}^2(V_{Y,t},{H}_{X,0}^{1}(U_X))$, such that $$||\phi||_{L_{Y,t}^2(V_{Y,t},{H}_{X,0}^{1}(U_X))}=1,$$ we deduce
    \begin{eqnarray*}
    {||(-X\cdot\nabla_Y+\partial_t)u||_{L_{Y,t}^2(V_{Y,t},{H}_X^{-1}(U_X))}\leq c\bigl (||\nabla_X u||_{L_{Y,t}^2(V_{Y,t},L^2(U_X))}+||g^*||_{L_{Y,t}^2(V_{Y,t},{H}_X^{-1}(U_X))}\bigr ).}
    \end{eqnarray*}
    Furthermore, using that $(u-g)$ is a valid test function we have
    \begin{align*}
     0 =&\iiint_{U_X\times V_{Y,t}}\ A(X,Y,t)\nabla_Xu\cdot \nabla_X(u-g)\, \d X \d Y \d t\notag\\
    &+\iint_{V_{Y,t}}\ \langle g^*(\cdot,Y,t)+(-X\cdot\nabla_Y+\partial_t)u(\cdot,Y,t), (u-g)(\cdot,Y,t)\rangle\, \d Y \d t.
\end{align*}
In particular,
 \begin{equation}\label{weak3llaha}
 \begin{split}
     0 =&\iiint_{U_X\times V_{Y,t}}\ A(X,Y,t)\nabla_X(u-g)\cdot \nabla_X(u-g)\, \d X \d Y \d t\\
    &+\iint_{V_{Y,t}}\ \langle (-X\cdot\nabla_Y+\partial_t)(u-g)(\cdot,Y,t), (u-g)(\cdot,Y,t)\rangle\, \d Y \d t\\
    &+\iiint_{U_X\times V_{Y,t}}\ A(X,Y,t)\nabla_Xg\cdot \nabla_X(u-g)\, \d X \d Y \d t\\
    &+\iint_{V_{Y,t}}\ \langle g^*(\cdot,Y,t)+(-X\cdot\nabla_Y+\partial_t)g(\cdot,Y,t), (u-g)(\cdot,Y,t)\rangle\, \d Y \d t.
\end{split}
\end{equation}
Again arguing as in \eqref{apa} and \eqref{apa+}, see \eqref{ex3}, we can deduce that
\begin{align}\label{weak3llahaha}
   \iint_{V_{Y,t}}\ \langle (-X\cdot\nabla_Y+\partial_t)(u-g)(\cdot,Y,t), (u-g)(\cdot,Y,t)\rangle\, \d Y \d t\geq 0.
\end{align}
Hence, using \eqref{weak3llaha}, \eqref{weak3llahaha}, \eqref{eq:Aellip}, { Cauchy-Schwarz and Young's inequality} we see that
 \begin{align*}
 ||\nabla_X(u-g)||_{L_{Y,t}^2(V_{Y,t},L^2(U_X))}&\leq c\bigl (||g||_{W(U_X\times V_{Y,t})}+||g^*||_{L_{Y,t}^2(V_{Y,t},{H}_X^{-1}(U_X))}\bigr )\notag\\
 &\quad +\epsilon ||(u-g)||_{L_{Y,t}^2(V_{Y,t},H_X^1(U_X))},
\end{align*}
where $\epsilon\in (0,1)$ is a degree of freedom and $c=c(m,\kappa,\epsilon)$ is a constant. Using Lemma \ref{lemma1a-} we can therefore conclude that
\begin{align}\label{weak3lla++}
 ||(u-g)||_{L_{Y,t}^2(V_{Y,t},H_X^1(U_X))}&\leq c\bigl (||g||_{W(U_X\times V_{Y,t})}+||g^*||_{L_{Y,t}^2(V_{Y,t},{H}_X^{-1}(U_X))}\bigr ).
\end{align}
Put together this proves the quantitative estimate in Theorem \ref{weakdp1}.

    \section{Proof of Theorem \ref{weakdp1++}, Theorem \ref{weakdp1gagain} and Theorem \ref{DP}}\label{Sec3}

    In this section we prove Theorem \ref{weakdp1++}, Theorem \ref{weakdp1gagain} and Theorem \ref{DP}. Throughout the section we let $U_X\subset\mathbb R^{m}$, $U_{Y}\subset \mathbb R^{m}$,  and $J\subset\mathbb R$ be bounded domains. In addition we assume that $U_X\subset\mathbb R^{m}$ is a bounded Lipschitz domain. We let the space $W(U_X\times U_Y\times J)$ be defined exactly as $W(U_X\times V_{Y,t})$ but with $V_{Y,t}$ replaced by $U_Y\times J$. The definition of weak solution to \eqref{dpweak} in the sense of Definition \ref{defwe}, but with $V_{Y,t}$ replaced by  $U_Y\times J$, is defined analogously. Let $\Omega\subset\mathbb R^m$, $m\geq 1$, be a (unbounded) Lipschitz domain as defined in \eqref{Lip}, {with boundary defined by a Lipschitz function $\psi$ with Lipschitz constant $M$}.

     We say that
    $u\in W_{\mathrm{loc}}(\Omega\times \mathbb R^{m}\times \mathbb R)$ if $u\in W(U_X\times U_{Y}\times J)$ whenever $U_X\subset\mathbb R^{m}$, $U_Y\subset\mathbb R^{m}$ are bounded domains, $J\subset\mathbb R$  is an open and bounded interval, and $U_X\times U_{Y}\times J\subset\Omega\times \mathbb R^{m}\times \mathbb R$.

     We let $W(\mathbb R^{N+1})=W(\mathbb R^m\times \mathbb R^{m}\times \mathbb R)$ be the closure of $C_0^\infty(\mathbb R^m\times \mathbb R^{m}\times \mathbb R)$ in the norm
\begin{equation}\label{space}
\begin{split}
 ||g||_{W(\mathbb R^{N+1})}&:= \biggl (\iint_{\mathbb R^m\times\mathbb R}(||g(\cdot,Y,t)||_{L^2(\mathbb R^{m})}^2+||\nabla_Xg(\cdot,Y,t)||_{L^2(\mathbb R^{m})}^2)\,\d Y\d t\biggr )^{1/2}\\
 &\quad + \biggl (\iint_{\mathbb R^{m}\times \mathbb R}||(-X\cdot\nabla_Y+\partial_t)g(\cdot,Y,t)||_{{H}_X^{-1}(\mathbb R^{m})}^2\, \d Y\d t\biggl )^{1/2}.
 \end{split}
 \end{equation}
 Given $\Omega\subset\mathbb R^m$ as above we let
 $W_{X,0}(\Omega\times \mathbb R^{m}\times \mathbb R)$ be the closure of all functions in $C_0^\infty(\overline{\Omega}\times \mathbb R^{m}\times \mathbb R)$, which are zero on
  $\partial\Omega\times \mathbb R^{m}\times \mathbb R$, in the norm of $W(\mathbb R^{N+1})=W(\mathbb R^m\times \mathbb R^{m}\times \mathbb R)$.

  \begin{definition}\label{deffa} Assume that $g\in W(\mathbb R^{m}\times \mathbb R^{m}\times \mathbb R)$, $g^*\in L^2_{Y,t}(\mathbb R^{m}\times \mathbb R,{H}_X^{-1}(\mathbb R^{m}))$. We say that $u$ is a weak solution to
    \begin{equation}\label{dpweak+g}
\begin{cases}
	\L u = g^*  &\text{in} \ \Omega\times \mathbb R^{m}\times \mathbb R, \\
      u = g  & \text{on} \ \partial\Omega\times \mathbb R^{m}\times \mathbb R,
\end{cases}
\end{equation}
if $u\in W_{\mathrm{loc}}(\Omega\times \mathbb R^{m}\times \mathbb R)$ and the following holds. First, \begin{eqnarray}\label{weak1g}
{\varphi}(u-g)\in W_0(\Omega\times \mathbb R^{m}\times \mathbb R),
    \end{eqnarray}
    whenever ${\varphi}\in C_{0}^\infty(\mathbb R^{m}\times \mathbb R^{m}\times \mathbb R)$. Second,
    \begin{equation}\label{weak3g}
    \begin{split}
     0 =&\iiint_{U_X\times U_Y\times J}\ A(X,Y,t)\nabla_Xu\cdot \nabla_X\phi\, \d X \d Y \d t\\
    &+\iint_{U_Y\times J}\ \langle g^*(\cdot,Y,t)+(-X\cdot\nabla_Y+\partial_t)u(\cdot,Y,t),\phi(\cdot,Y,t)\rangle\, \d Y \d t.
\end{split}
\end{equation}
for all $\phi\in L_{Y,t}^2(U_Y\times J,H_{X,0}^1(U_X))$, whenever $U_X\subset\mathbb R^{m}$, $U_Y\subset\mathbb R^{m}$ and $J\subset\mathbb R$ are bounded domains.
\end{definition}

    \subsection{Proof of  Theorem \ref{weakdp1++}} Recall that the natural family of dilations for the operator $\L$, $(\delta_r)_{r>0}$, on $\R^{N+1}$,
is
\begin{equation}\label{dil.alpha.i}
 \delta_r (X,Y,t) =(r X, r^3 Y,r^2 t),
\end{equation}
for $(X,Y,t) \in \R^{N +1}$,  $r>0$.   In the following we will, without loss of generality, assume that $0\in \partial\Omega$.  We let, for $R>0$,
\begin{equation}\label{def.Omega.Delta.fr}
\begin{split}
    U_{X}^R&:=\Omega\cap\{X=(x,x_m)  \mid  |x_i|<R,\  \psi(x)<x_m<4MR\},\\
    \Delta_{X}^R&:=\partial\Omega\cap\{X=(x,x_m)  \mid  |x_i|<R,\  -4MR<x_m<4MR\}.
\end{split}
\end{equation}
Furthermore, we let $V_{Y,t}\subset\mathbb R^{m}\times \mathbb R$ be a bounded domain with a boundary  which is $C^{1,1}$-smooth and contains $(0,0)\in \mathbb R^{m}\times \mathbb R$. For $R>0$ we introduce
\begin{equation}\label{def.Omega.Delta.fr++}
    V_{Y,t}^R:=\Omega\cap\{(Y,t)  \mid  (R^{-3}Y,R^{-2}t)\in V_{Y,t}\}.
\end{equation}
Then, for each $R>0$, $U_X^R\subset\mathbb R^{m}$ is a bounded Lipschitz domain and
$V_{Y,t}^R\subset\mathbb R^{m}\times \mathbb R$ is a bounded domain with a boundary  which is $C^{1,1}$-smooth. Furthermore, $U_X^{\tilde R}\times V_{Y,t}^{\tilde R}\subseteq U_X^{R}\times V_{Y,t}^{R}$ whenever $\tilde R\leq  R$ and
$$\Omega\times \mathbb R^{m}\times \mathbb R=\bigcup_{R>0} U_X^R\times V_{Y,t}^R, $$
i.e., the domains $\{U_X^R\times V_{Y,t}^R\}_R$ exhaust $\Omega\times \mathbb R^{m}\times \mathbb R$. Finally, we let
$W_{\Delta_X^R,0}(U_X^R\times V_{Y,t}^R)$ be the closure in $L_{Y,t}^2(V_{Y,t}^R,H_X^1(U_X^R))$ of all functions in  $C^\infty(\overline{U_X^R\times V_{Y,t}^R})$ which vanish on
$\Delta_X^R\times \overline{V_{Y,t}^R}$.

 To start the proof, we let, given $R>0$, $\phi_R$, $0\leq\phi_R \leq 1$,  be a smooth function such that $\phi_R=1$ on $\Delta_X^{R/2}\times \overline{V_{Y,t}^R}$ and
    $\phi_R=0$ on $\partial_\K (U_X^R\times V_{Y,t}^R)\setminus (\Delta_X^{3R/4}\times \overline{V_{Y,t}^R})$. Assuming $g\in W(\mathbb R^{m}\times \mathbb R^{m}\times \mathbb R)$, $g^*\in L^2_{Y,t}(\mathbb R^{m}\times \mathbb R,{H}_X^{-1}(\mathbb R^{m}))$, we see that if we let $g_R=g\phi_R$ and $g_R^*=g^*\phi_R$, then $g_R\in W(U_X^R\times V_{Y,t}^R)$, $g_R^*\in L^2_{Y,t}(V_{Y,t}^R,{H}_X^{-1}(U_X^R))$ and $g_R\in W_{Y,t,0}(U_X^R\times V_{Y,t}^R)$. Using Theorem \ref{weakdp1} we see that there exists a unique weak solution $u_R$ to
    \begin{equation}\label{dpweak+l}
\begin{cases}
	\L u_R = g_R^*  &\text{in} \ U_X^R\times V_{Y,t}^R, \\
      u_R = g_R  & \text{on} \ \partial_{\K}(U_X^R\times V_{Y,t}^R),
\end{cases}
\end{equation}
    in the sense of Definition \ref{defwe}, i.e.
                      \begin{eqnarray}\label{weak1l}
u_R\in W(U_X^R\times V_{Y,t}^R),\ (u_R-g_R)\in  W_0(U_X^R\times V_{Y,t}^R),
    \end{eqnarray}
    and
\begin{equation}\label{weak3l}
    \begin{split}
     0 =&\iiint_{U_X^R\times V_{Y,t}^R}\ A \nabla_Xu_R\cdot \nabla_X\phi\, \d X \d Y \d t\\
    &+\iint_{V_{Y,t}^R}\ \langle g_R^*(\cdot,Y,t)+(-X\cdot\nabla_Y+\partial_t)u_R(\cdot,Y,t),\phi(\cdot,Y,t)\rangle_R\, \d Y \d t,
\end{split}
\end{equation}
for all $ \phi\in L_{Y,t}^2(V_{Y,t}^R,H_{X,0}^1(U_X^R))$ and where now $\langle \cdot,\cdot\rangle_R=\langle \cdot,\cdot\rangle_{H_X^{-1}(U_X^R),H_{X,0}^{1}(U_X^R)}$ is the duality pairing in $H_X^{-1}(U_X^R)$. Note that \eqref{weak1l} implies that  \begin{eqnarray}\label{weak1l+}
(u_R-g_R)\in  W_{\Delta_X^R,0}(U_X^R\times V_{Y,t}^R).
    \end{eqnarray}
    Furthermore, using \eqref{weak3l} we see, by the estimate stated in Theorem \ref{weakdp1}, that there exists a constant $c$, independent of $u_R$ and $g_R$ but depending on $m$, $\kappa$, and $U_X^R\times V_{Y,t}^R$ such that
    \begin{align*}
    ||u_R||_{W(U_X^R\times V_{Y,t}^R)}&\leq c\bigl (||g_R||_{W(U_X^R\times V_{Y,t}^R)}+||g_R^*||_{L_{Y,t}^2(V_{Y,t}^R,H_{X}^{-1}(U_X^R))}\bigr )\\
    &\leq c\bigl (||g||_{W(\mathbb R^m\times \mathbb R^{m}\times \mathbb R)}+||g^*||_{L^2_{Y,t}(\mathbb R^{m}\times \mathbb R,{H}_X^{-1}(\mathbb R^{m}))}\bigr ).
    \end{align*}
    Fix $\tilde R>0$ and consider $R\geq \tilde R$. Then the inequality in the last display implies that
    \begin{eqnarray*}
    ||u_R||_{W(U_X^{\tilde R}\times V_{Y,t}^{\tilde R})}\leq c(\tilde R)\bigl (||g||_{W(\mathbb R^m\times \mathbb R^{m}\times \mathbb R)}+||g^*||_{L^2_{Y,t}(\mathbb R^{m}\times \mathbb R,{H}_X^{-1}(\mathbb R^{m}))}\bigr ).
    \end{eqnarray*}
    In particular, if $R\geq \tilde R$ then $\{u_R\}$ is uniformly bounded in the norm of $W(U_X^{\tilde R}\times V_{Y,t}^{\tilde R})$. Using this we see that there
    exists a subsequence of $\{u_R\}$, which we denote by $\{u_{\tilde R}^j\}$, such that
    \begin{eqnarray*}
    u_{\tilde R}^j\to \tilde u_{\tilde R}\mbox{ weakly in }W(U_X^{\tilde R}\times V_{Y,t}^{\tilde R}),
    \end{eqnarray*}
     and $\tilde u_{\tilde R}$ satisfies $\tilde u_{\tilde R}\in W(U_X^{\tilde R}\times V_{Y,t}^{\tilde R})$ and is such that $(\tilde u_{\tilde R}-g)\in  W_{\Delta_X^{\tilde R},0}(U_X^{\tilde R}\times V_{Y,t}^{\tilde R})$. Furthermore, $\tilde u_{\tilde R}$ satisfies \eqref{weak3l} with $U_X^{R}\times V_{Y,t}^{R}$ replaced by $U_X^{\tilde R}\times V_{Y,t}^{\tilde R}$ and for all $ \phi\in L_{Y,t}^2(V_{Y,t}^{\tilde R},H_{X,0}^1(U_X^{\tilde R}))$. In particular, $\tilde u_{\tilde R}$ is a weak solution in
    $U_X^{\tilde R}\times V_{Y,t}^{\tilde R}$ with the correct boundary data, i.e. $g$, on $\Delta_X^{\tilde R}$. Next, consider a sequence $\{\tilde R_i\}$, $\tilde R_i\to \infty$ as $i\to\infty$. Then by the argument outlined,
    \begin{eqnarray*}
    u_{\tilde R_i}^j\to \tilde u_{\tilde R_i}\mbox{ weakly in }W(U_X^{\tilde R_i}\times V_{Y,t}^{\tilde R_i}),
    \end{eqnarray*}
    and $\tilde u_{\tilde R_i}$ is a solution in $U_X^{\tilde R_i}\times V_{Y,t}^{\tilde R_i}$ in the sense discussed. Hence, taking a diagonal subsequence, denoted
    $\{u_{\tilde R_l^l}\}$,  we see that $u_{\tilde R_l^l}\to \tilde u$ as $l\to \infty$, and $\tilde u$ is such that, for every $R>0$
     \begin{eqnarray}\label{weak1gaa}
\tilde u\in W(U_X^R\times V_{Y,t}^R),\ (\tilde u-g)\in  W_{\Delta_X^R,0}(U_X^R\times V_{Y,t}^R),
    \end{eqnarray}
    and
\begin{equation}\label{weak3gaa}
    \begin{split}
     0 =&\iiint_{U_X^R\times V_{Y,t}^R}\ A \nabla_X\tilde u\cdot \nabla_X\phi\, \d X \d Y \d t\\
    &+\iint_{V_{Y,t}^R}\ \langle g^*(\cdot,Y,t)+(-X\cdot\nabla_Y+\partial_t)\tilde u(\cdot,Y,t),\phi(\cdot,Y,t)\rangle_R\, \d Y \d t,
\end{split}
\end{equation}
for all $\phi\in L_{Y,t}^2(V_{Y,t}^R,H_{X,0}^1(U_X^R))$. In particular, $\tilde u$ is a weak solution to the problem to the problem in \eqref{dpweak+g} in the sense of Definition \ref{deffa}. This completes the proof of the
existence of $\tilde u$ and hence the proof of the theorem.

    \subsection{Proof of Theorem \ref{weakdp1gagain}}
    Assume that $u\in W_{\mathrm{loc}}(\Omega\times \mathbb R^{m}\times \mathbb R)\cap L^\infty(\Omega\times \mathbb R^{m}\times \mathbb R)$ is a weak solution to the problem in \eqref{dpweak+g}, with $g\equiv 0\equiv g^*$, in the sense of Definition \ref{deffa}. We want to prove that $u\equiv 0$ on $\Omega\times \mathbb R^{m}\times \mathbb R$. We can without loss of generality assume that $0\in\partial\Omega$. Let
    $V_{Y,t}\subset\mathbb R^{m}\times \mathbb R$ be a bounded domain with a boundary  which is $C^{1,1}$-smooth and contains $(0,0)\in \mathbb R^{m}\times \mathbb R$. Given $R>0$ we let $U_{X}^R$ and $V_{Y,t}^R$ be defined as in \eqref{def.Omega.Delta.fr} and \eqref{def.Omega.Delta.fr++}, respectively.

    We first note that the weak maximum principle holds in bounded domains $U_{X}^{R}\times V_{Y,t}^{R}$. Indeed, by Theorem \ref{weakdp1} we have uniqueness to the problem in \eqref{dpweak} in $U_{X}^{R}\times V_{Y,t}^{R}$. Therefore it suffices to prove the  weak maximum principle for the regularized operators
\begin{eqnarray}\label{e-kolm-nd+ddd}
   \L_{\epsilon}:=\nabla_X\cdot(A^\epsilon(X,Y,t)\nabla_X)+X\cdot\nabla_{Y}-\partial_t.
    \end{eqnarray}
    Here $\epsilon>0$ is small and $A^\epsilon$ is a regularization of $A$, constructed by convolution of $A$ with respect to an approximation of the identity with parameter $\epsilon$, such that $A^\epsilon\to A$ a.e. on compact subsets of $\mathbb R^{N+1}$ as $\epsilon\to 0$. The weak maximum principle for $\L_{\epsilon}$ in $U_{X}^{R}\times V_{Y,t}^{R}$ can be proved as in Lemma 6.1 in \cite{LN1}.

    Let $R>0$ be  large enough to ensure that
    $A=I_m$ on $(\Omega\times \mathbb R^{m}\times \mathbb R)\setminus(U_{X}^{R}\times V_{Y,t}^{R})$. Then $\L=\K$ on $(\Omega\times \mathbb R^{m}\times \mathbb R)\setminus(U_{X}^{R}\times V_{Y,t}^{R})$. Let
    $${M^*}:=\sup_{{U_{X}^{R}\times V_{Y,t}^{R}}} |u|.$$
    Let $\eta>0$  be given and assume that $u\geq 2\eta$ at some point in $\Omega\times \mathbb R^{m}\times \mathbb R$. Note that the maximum principle that we are to prove is known to hold for $\K$. Therefore, using H{\"o}lder continuity estimates for $\K$, as well as estimates for the fundamental solution of $\K$, see for example Theorem 3.2 and Lemma 4.17 in \cite{LN1}, we can conclude that there exists $0<\delta=\delta(\eta,{M^*})\ll 1$
such that $ u<\eta$ in $(\Omega\times \mathbb R^{m}\times \mathbb R)\setminus (U_{X}^{R/\delta}\times V_{Y,t}^{R/\delta})$.  Hence $u\geq 2\eta$ at some point in $(U_{X}^{R/\delta}\times V_{Y,t}^{R/\delta})$. However, applying  the maximum principle in the bounded domain $(U_{X}^{R/\delta}\times V_{Y,t}^{R/\delta})$ we see
$u\leq \eta$ in $(U_{X}^{R/\delta}\times V_{Y,t}^{R/\delta})$. This yields a contradiction. Hence $u \leq 0$ on $\Omega\times \mathbb R^{m}\times \mathbb R$. Arguing analogously we can prove that $u \geq 0$ on $\Omega\times \mathbb R^{m}\times \mathbb R$, and hence $u\equiv 0$ on $\Omega\times \mathbb R^{m}\times \mathbb R$. The proof of Theorem \ref{weakdp1gagain} is complete.

    \subsection{Proof of Theorem \ref{DP}} Assume that $A$ satisfies \eqref{eq:Aellip} with constant $\kappa$ as well as \eqref{comp}. Let  $\varphi\in W(\mathbb R^{N+1})\cap C_0(\mathbb R^{N+1})$. Consider the regularized operator $\L_{\epsilon}$ introduced in \eqref{e-kolm-nd+ddd}. Applying Theorem \ref{weakdp1++}, Theorem \ref{weakdp1gagain} as well as Theorem 3.1 in \cite{LN1}, in
    $\Omega\times \mathbb R^{m}\times \mathbb R$ and for $\L_{\epsilon}$, we can conclude that there exists a unique  bounded weak solution $u_\epsilon\in W_{\mathrm{loc}}(\Omega\times \mathbb R^{m}\times \mathbb R)\cap C(\overline{\Omega}\times \mathbb R^m\times\mathbb R)$, in the sense of Definition \ref{deffa}, to the problem
    \begin{equation}\label{dpweakeps}
\begin{cases}
	\mathcal{L}_{\epsilon} u_{\epsilon} =0  &\text{in} \ \Omega\times \mathbb R^{m}\times \mathbb R, \\
u_{\epsilon}=\varphi & \text{on} \ \partial \Omega\times \mathbb R^{m}\times \mathbb R.
\end{cases}
\end{equation}
Furthermore, there exists, for every $(X,Y,t)\in \Omega\times \mathbb R^m\times \mathbb R $, a unique probability
measure  $\omega_{\K}^\epsilon(X,Y,t,\cdot)$ on $\partial\Omega\times \mathbb R^m\times \mathbb R $ such that
\begin{eqnarray*}
u_\epsilon(X,Y,t)=\iiint_{\partial\Omega\times \mathbb R^m\times \mathbb R }\varphi(\tilde X,\tilde Y,\tilde t)\d \omega_{\K}^\epsilon(X,Y,t,\tilde X,\tilde Y,\tilde t).
\end{eqnarray*}
It is readily seen that $u_\epsilon\to u$ in $W_{\mathrm{loc}}(\Omega\times \mathbb R^{m}\times \mathbb R)$, that
$\omega_{\K}^\epsilon\to \omega_{\K}$ weakly on $\partial\Omega\times \mathbb R^m\times \mathbb R$ in the sense of measures, that $u\in W_{\mathrm{loc}}(\Omega\times \mathbb R^{m}\times \mathbb R)\cap L^\infty(\Omega\times \mathbb R^{m}\times \mathbb R)$ is the unique  bounded weak solution, in the sense of Definition \ref{deffa}, to the problem in \eqref{dpweakeps} and that
\begin{eqnarray*}
u(X,Y,t)=\iiint_{\partial\Omega\times \mathbb R^m\times \mathbb R }\varphi(\tilde X,\tilde Y,\tilde t)\d \omega_{\K}(X,Y,t,\tilde X,\tilde Y,\tilde t).
\end{eqnarray*}
Therefore, to prove the theorem it only remains to prove that $u\in C(\overline{\Omega}\times \mathbb R^m\times\mathbb R)$, i.e. to prove that $u$ is continuous up to the boundary. To prove this we first introduce some notation.

Recall that the class of operators $\L$ is closed under the  group law
\begin{equation}\label{e70}
(\tilde X,\tilde Y,\tilde t)\circ (X, Y,t):=(\tilde X+X,\tilde Y+Y-t\tilde X,\tilde t+t),\ (X,Y,t),\ (\tilde X,\tilde Y,\tilde t)\in \R^{N+1}.
\end{equation}
Given $(X,Y,t)\in \R^{N+1}$ we let \begin{equation}\label{kolnormint}
\|(X,Y, t)\|:=|(X,Y)|\!+|t|^{\frac{1}{2}},\ |(X,Y)|:=\big|X\big|+\big|Y\big|^{1/3}.
\end{equation}
We recall the following pseudo-triangular
inequalities: there exists a positive constant ${c}$ such that
\begin{eqnarray}\label{e-ps.tr.in}
\quad \|(X,Y,t)^{-1}\|\le {c}  \| (X,Y,t) \|,\ \|(X,Y,t)\circ (\tilde  X,\tilde  Y,\tilde  t)\| \le  {c}  (\| (X,Y,t) \| + \| (\tilde  X,\tilde  Y,\tilde  t)
\|),
\end{eqnarray}
whenever $(X,Y,t),(\tilde  X,\tilde  Y,\tilde  t)\in \R^{N+1}$. Using \eqref{e-ps.tr.in} it  follows immediately that
\begin{equation} \label{e-triangularap}
    \|(\tilde  X,\tilde  Y,\tilde  t)^{-1}\circ (X,Y,t)\|\le c \, \|(X,Y,t)^{-1}\circ (\tilde  X,\tilde  Y,\tilde  t)\|,
\end{equation}
whenever $(X,Y,t),(\tilde  X,\tilde  Y,\tilde  t)\in \R^{N+1}$. Let
\begin{equation}\label{e-ps.distint}
    d((X,Y,t),(\tilde X,\tilde Y,\tilde t)):=\frac 1 2\bigl( \|(\tilde X,\tilde Y,\tilde t)^{-1}\circ (X,Y,t)\|+\|(X,Y,t)^{-1}\circ (\tilde X,\tilde Y,\tilde t)\|).
\end{equation}
Using \eqref{e-triangularap} it follows that
\begin{equation}\label{e-ps.dist}
   \|(\tilde X,\tilde Y,\tilde t)^{-1}\circ (X,Y,t)\|\approx d((X,Y,t),(\tilde X,\tilde Y,\tilde t))\approx \|(X,Y,t)^{-1}\circ (\tilde X,\tilde Y,\tilde t)\|,
\end{equation}
with constants of comparison independent of $(X,Y,t),(\tilde X,\tilde Y,\tilde t)\in \R^{N+1}$. Again using \eqref{e-ps.tr.in} we also see that
\begin{equation} \label{e-triangular}
    d((X,Y,t),(\tilde X,\tilde Y,\tilde t))\le {c} \bigl(d((X,Y,t),(\hat X, \hat Y,\hat t))+d((\hat X, \hat Y,\hat
t),(\tilde X,\tilde Y,\tilde t))\bigr ),
\end{equation}
whenever $(X,Y,t),(\hat X, \hat Y,\hat t),(\tilde X,\tilde Y,\tilde t)\in \R^{N+1}$, and hence $d$ is a symmetric quasi-distance.

Let $Q:=(-1,1)^m\times (-1,1)^m\times (-1,1)$ and let
$$Q_r=\delta_r Q:=\{(r X, r^3 Y,r^2 t)\mid (X,Y,t)\in Q\}.$$
Given a point $(X_0,Y_0,t_0)\in \R^{N+1}$ we let
$$ Q_r(X_0,Y_0,t_0):=(X_0,Y_0,t_0)\circ Q_r:=\{(X_0,Y_0,t_0)\circ(X,Y,t)\mid (X,Y,t)\in Q_r\}.$$

We are now ready to prove that $u\in C(\overline{\Omega}\times \mathbb R^m\times\mathbb R)$. Let $(\tilde X_0,\tilde Y_0,\tilde t_0)\in \partial\Omega\times \mathbb R^m\times \mathbb R$. Given
$\delta>0$ we first choose $\tilde r=\tilde r(\delta)$ so that
$$|\varphi(X,Y,t)-\varphi(X_0,Y_0,t_0)|<\delta/2,$$
whenever $(X,Y,t)\in (\overline{\Omega}\times \mathbb R^m\times \mathbb R)\cap  Q_{\tilde r}(X_0,Y_0,t_0)$. Let $\phi$ be a test function satisfying $0\leq\phi\leq 1$, with support in $ Q_{\tilde r}(X_0,Y_0,t_0)$, such that $\phi\equiv 1$ on
$ Q_{\tilde r/2}(X_0,Y_0,t_0)$. Let $\hat \varphi(X,Y,t):=\phi(X,Y,t) \bigl(\varphi(X,Y,t)-\varphi(X_0,Y_0,t_0)\bigr)$ and $\tilde \varphi(X,Y,t):=\bigl(1-\phi(X,Y,t)\bigr)\bigl(\varphi(X,Y,t)-\varphi(X_0,Y_0,t_0)\bigr)$. Now let
$\hat u_\epsilon$ and $\tilde u_\epsilon $ be the unique bounded weak solutions to the problems in \eqref{dpweakeps} but with $\varphi$ replaced with $\hat \varphi$ and $\tilde \varphi$, respectively. It follows by uniqueness  that $u_{\epsilon}(X,Y,t)-\varphi(X_0,Y_0,t_0)=\hat u_\epsilon(X,Y,t)+\tilde u_\epsilon(X,Y,t)$ whenever $(X,Y,t)$ is in the closure of $\Omega\times \mathbb R^m\times \mathbb R$. Furthermore,
\begin{equation}\label{eq:dir_norm}
 	||\hat u_\epsilon||_{L^\infty(\Omega\times \mathbb R^m\times \mathbb R)}\leq\delta/4,
 \end{equation}
 by the weak maximum principle for $\L_\epsilon$. Using Theorem 3.2 in \cite{LN1}
\begin{align*}
|\tilde u_\epsilon(X,Y,t)| \le c \left(\frac{d((X,Y,t),(X_0,Y_0,t_0))}{\tilde r}\right)^{\alpha},
\end{align*}
whenever $(X,Y,t)\in (\overline{\Omega}\times \mathbb R^m\times \mathbb R)\cap  Q_{\tilde r/2}(X_0,Y_0,t_0)$ and with constants $c$ and $\alpha$ independent of $\epsilon$. We now choose
$r=r(\delta)$ so that $c \left(r/\tilde r\right)^{\alpha}<\delta/4$. Then
$$\sup_{\epsilon>0}|u_{\epsilon}(X,Y,t)-\varphi(X,Y,t)|<\delta,$$
whenever $(X,Y,t)\in (\overline{\Omega}\times \mathbb R^m\times \mathbb R)\cap  Q_{r}(X_0,Y_0,t_0)$. In particular, the family $\{u_\eps\}$ is uniformly continuous on compact sets up to the boundary and with modulus of  uniform continuity independent of $\eps$. Thus, $u_\eps$ converges uniformly to $u$ in any neighborhood of the boundary and, in particular
$u\in C(\overline{\Omega}\times \mathbb R^m\times\mathbb R)$.  This completes the proof of Theorem \ref{DP}.


\begin{thebibliography}{99}


\bibitem{ABM}
S.~Armstrong, A.~Bordas, and J.-C. Mourrat.
\newblock Quantitative stochastic homogenization and regularity theory of
  parabolic equations.
\newblock { Anal. PDE}, 11(8):1945--2014, 2018.

\bibitem{AM}
S. Armstrong and J.C Mourrat. Variational methods for the kinetic Fokker-Planck equation, preprint (2019), 	arXiv:1902.04037.






\bibitem{BE1}
H.~Br\'ezis and I.~Ekeland.
\newblock Un principe variationnel associ\'e \`a certaines \'equations
  paraboliques. {L}e cas ind\'ependant du temps.
\newblock { C. R. Acad. Sci. Paris S\'er. A-B}, 282(17):Aii, A971--A974,
  1976.

\bibitem{BE2}
H.~Br\'ezis and I.~Ekeland.
\newblock Un principe variationnel associ\'e \`a certaines \'equations
  paraboliques. {L}e cas d\'ependant du temps.
\newblock { C. R. Acad. Sci. Paris S\'er. A-B}, 282(20):Ai, A1197--A1198, 1976.



\bibitem{ET}
I.~Ekeland and R.~Temam.
\newblock { Convex analysis and variational problems}.
\newblock North-Holland Publishing Co., Amsterdam-Oxford; American Elsevier
  Publishing Co., Inc., New York, 1976.


\bibitem{GBook}
N.~Ghoussoub.
\newblock { Self-dual partial differential systems and their variational
  principles}.
\newblock Springer Monographs in Mathematics. Springer, New York, 2009.

\bibitem{ghoussoub-tzou}
N.~Ghoussoub and L.~Tzou.
\newblock A variational principle for gradient flows.
\newblock { Math. Ann.}, 330(3):519--549, 2004.


\bibitem{GIMV}
F. Golse, C. Imbert, C. Mouhot and A. F. Vasseur, {Harnack inequality for kinetic Fokker-Planck equations with rough coefficients and applications to the Landau equation}, Annali della Scuola Normale Superiore di Pisa - Classe di Scienze 253 (2019), 19.




\bibitem{Hm}
{L.~H{\"o}rmander}, {Hypoelliptic second order differential equations},
  Acta Math., {119} (1967), 147--171.





\bibitem{K} A. Kolmogorov, { Zufllige Bewegungen. (Zur Theorie der Brownschen Bewegung)},
Ann. of Math., II. Ser., {35} (1934), 116-117.




\bibitem{LN1} M. Litsg{\aa}rd and K. Nystr{\"o}m, Potential theory for a class of strongly degenerate parabolic operators of Kolmogorov type with rough coefficients, preprint (2020), 	arXiv:2012.03654.

    \bibitem{LN2} {M. Litsg{\aa}rd and K. Nystr{\"o}m, On the fine properties of  parabolic measures associated to strongly degenerate parabolic operators of Kolmogorov type, Adv. in Math., 387 (2021), 107833.}

        \bibitem{LN3} M. Litsg{\aa}rd and K. Nystr{\"o}m, A structure theorem for elliptic and parabolic operators with applications to  homogenization of operators of Kolmogorov type, preprint (2020),	arXiv:2012.07446.


\bibitem{Manfredini}{
{M.~Manfredini}, {The {D}irichlet problem for a class of ultraparabolic   equations}, Adv. Differential Equations,
{ 2} (1997), 831--866.}

\bibitem{WangZhang}{
{W. Wang and L. Zhang}, {The $C^\alpha$ regularity of weak solutions of ultraparabolic equations}, Discrete Contin. Dyn. Syst., 29 (2011), pp. 1261--1275.}


\end{thebibliography}
\end{document}